\documentclass[12pt]{amsart}

\usepackage{amsmath,amssymb,amsfonts,amsthm,amscd,indentfirst}

\newcommand{\RomanNum}[1]
    {\MakeUppercase{\romannumeral #1}}

\textheight 8.5in
\textwidth 6 in
\topmargin 0.0cm
\oddsidemargin 0.5cm \evensidemargin 0.5cm
\parskip 0.0cm

\usepackage{indentfirst}
\usepackage{hyperref}
\usepackage{graphicx}

\usepackage{color}
\usepackage[dvipsnames]{xcolor}

\numberwithin{equation}{section}

\usepackage{setspace}
\usepackage{amsmath, amssymb,amsthm, amsfonts, color}

\newtheorem{thm}{Theorem}[section]
\newtheorem{lma}{Lemma}[section]
\newtheorem{prop}{Proposition}[section]
\newtheorem{cor}{Corollary}[section]

\theoremstyle{definition}

\theoremstyle{remark}
\newtheorem{rem}{Remark}[section]

\renewcommand{\div}{\mbox{div}}

\newcommand{\Ric}{\mbox{Ric}}
\newcommand{\R}{\mathbb R}

\newcommand{\be}{\begin{equation}}
\newcommand{\ee}{\end{equation}}
\newcommand{\bee}{\begin{equation*}}
\newcommand{\eee}{\end{equation*}}
\newcommand{\bal}{\begin{aligned}}
\newcommand{\eal}{\end{aligned}}

\def\Vol{\text{Vol}}

\def\p{\partial}

\def\la{\langle}
\def\ra{\rangle}
\def\lf{\left}
\def\ri{\right}
\def\Pi{\displaystyle{\mathbb{II}}}

\def\m{\mathfrak{m}}

\def\c{\mathfrak{c}}

\begin{document}

\title{Mass, capacitary functions, and the mass-to-capacity ratio}

\author{Pengzi Miao}
\address[Pengzi Miao]{Department of Mathematics, University of Miami, Coral Gables, FL 33146, USA}
\email{pengzim@math.miami.edu}

\thanks{P. Miao's research was partially supported by NSF grant DMS-1906423.}

\begin{abstract}

We study connections among the ADM mass, positive harmonic functions tending to zero at infinity, 
and the capacity of the boundary of asymptotically flat $3$-manifolds with nonnegative scalar curvature. 

First we give new formulae that detect the ADM mass via harmonic functions. 
Then we derive a family of monotone quantities and geometric inequalities 
if the underlying manifold has simple topology.
As an immediate application, we observe several additional proofs of the 
$3$-dimensional Riemannian positive mass theorem.
One proof leads to new, sufficient conditions that imply positivity of the mass 
via $C^0$-geometry of regions separating the boundary and $\infty$. 
A special case of such sufficient conditions shows, if a region enclosing 
the boundary has relative small volume, then the mass is positive.

As further applications, we obtain integral identities for 
the mass-to-capacity ratio.
 We also promote the inequalities to become equality on 
spatial Schwarzschild manifolds outside rotationally symmetric spheres. 
Among other things, we show the mass-to-capacity ratio is always bounded below by 
one minus the square root of the normalized Willmore functional of the boundary.

Prompted by our findings, we carry out a study of manifolds satisfying a constraint on the mass-to-capacity ratio. 
We point out such manifolds satisfy improved inequalities, their mass has an upper bound
depending only on the boundary data, there are no closed minimal surfaces enclosing the boundary, 
and these manifolds include static extensions 
in the context of the Bartnik quasi-local mass.

\end{abstract}

\maketitle

\tableofcontents

\markboth{Pengzi Miao}{Mass and capacitary potentials}

\newpage

\section{Introduction and statement of results}

On an asymptotically flat $3$-manifold $(M,g)$, the ADM mass \cite{ADM61} is a flux integral near $\infty$ given by 
$$
\m  = \lim_{ r \to \infty  } \frac{1}{16 \pi} \int_{ |x | = r } \sum_{j, k} ( g_{jk,j} -  g_{jj,k} ) \nu^k  .
$$
Here $\{ x_i \}_{1\le i \le 3} $ is a coordinate chart defining the asymptotic flatness of $(M, g)$
and $\nu = |x|^{-1} x$ denotes the coordinate unit normal to $\{ | x | = r \}$. 
By a result of Bartnik \cite{Bartnik86}, and of Chru\'{s}ciel \cite{Chrusciel86}, 
$\m$ is independent on the choice of the coordinates $\{ x_i \}$.

On an asymptotically flat $3$-manifold $(M,g)$ with boundary $ \Sigma $, 
the capacity (or the $L^2$-capacity) of $\Sigma $ is given by 
$$
\c_{_\Sigma} = \inf \left\{  \frac{1}{4 \pi} \int_M | \nabla f |^2  \right\} , 
$$
where the infimum is taken over all locally Lipschitz functions $f$ that equal $1$ at $\Sigma$ and tend to $0$ at $\infty$. 
Equivalently,
$ \displaystyle \c_{_\Sigma} = \frac{1}{4 \pi} \int_M | \nabla \phi |^2  = \frac{1}{4\pi} \int_{\Sigma} | \nabla \phi|  $, 
where 
$$
\Delta \phi = 0 , \ \ 
 \phi |_\Sigma = 1 , \ \text{and} \  
 \phi \to 0  \ \ \text{at} \  \ \infty.
$$

Regarding the mass, a fundamental result is the Riemannian positive mass theorem, first proved by Schoen and Yau \cite{SchoenYau79}
and by Witten \cite{Witten81}. The theorem states if $(M, g)$ is a complete, asymptotically flat $3$-manifold with nonnegative scalar curvature, without boundary, then $$ \m \ge 0  , $$ 
and equality  holds if and only if $(M, g)$ is isometric to the Euclidean space $ \R^3$. 

Regarding the mass and the capacity, an important result was due to Bray \cite{Bray02}.
Bray showed if  $(M, g)$ is a complete, asymptotically flat $3$-manifold with nonnegative scalar curvature, with minimal surface boundary $\Sigma = \p M$,  then
$$ \m \ge \c_{_\Sigma}  ,$$ 
and equality   holds if and only if  $(M, g)$ is isometric to a spatial Schwarzschild manifold 
outside the horizon. 

If the mean curvature $H$ of the boundary $\Sigma$ in $(M, g)$ is not assumed to be zero, 
using the weak inverse mean curvature flow developed by Huisken and Ilmanen \cite{HI01}, 
Bray and the author \cite{BrayMiao08} showed 
$$  \m \c^{-1}_{_\Sigma} \ge 1 - \left( \frac{1}{16 \pi} \int_\Sigma H^2 \right)^\frac12  $$
under the assumptions 
$ \int_\Sigma H^2 \le 16 \pi $ 
and $H_2 (M, \Sigma) = 0 $, and equality holds if and only if  $(M, g)$ is isometric to a spatial Schwarzschild manifold 
outside a rotationally symmetric sphere with nonnegative (constant) mean curvature.

Recently, level sets of harmonic functions have been found to be an efficient tool to study scalar curvature 
in $3$-dimension. A piorneering work of Stern \cite{Stern19} revealed 
intriguing analogy between the use of such level sets and the use of stable minimal surfaces 
instituted by Schoen and Yau \cite{SchoenYau79}. 
On asymptotically flat $3$-manifolds, a new proof of the positive mass theorem was given by 
Bray, Kazaras, Khuri and Stern \cite{BKKS19}, which made use of harmonic functions 
asymptotic to a linear coordinate function.   

In terms of monotone quantities along the level sets, Munteanu and Wang in \cite{MunteanuWang21} established sharp 
comparison results on complete, 
nonparabolic $3$-manifolds via the discovery of a monotone quantity along level sets of 
the minimal positive Green's function. 
In \cite{AMO21}, Agostiniani, Mazzieri and Oronzio obtained another proof of the Riemannian positive mass theorem 
through a different monotone quantity along level sets of the Green's function on asymptotically flat $3$-manifolds.

In this paper, we consider harmonic functions $u$ satisfying
$$ u (x) = 1 - \c |x|^{-1} + o ( |x|^{-1} ) , \ \text{as} \ x \to \infty , $$ 
for some constant $\c > 0$, on an asymptotically flat 
$3$-manifold $(M,g)$.
In the case $(M, g)$ has boundary $\Sigma$ and $ u $ is $0$ at $ \Sigma$, 
$\c = \c_{_\Sigma}$  and $u$ is referred as the capacitary function on $(M,g)$. 
We obtain a sequence of new results relating the mass of $(M, g)$, the capacitary function 
$u$, and the capacity $\c_{_\Sigma}$.

We first find formulae that detect the mass of $(M,g)$ via the level sets of  such a $u$, see Theorem \ref{thm-limits}. 
In particular, Theorem \ref{thm-limits} (ii) shows
\be \label{eq-intro-limit-1}
\lim_{t \to 1} \frac{1}{ 1 - t }   \left[   4 \pi -  \frac{1}{ (1 - t)^2 } \int_{\Sigma_{t}} | \nabla u |^2   \right] 
= 4 \pi \, \m \, \c^{-1} .
\ee
Here $ \Sigma_t = u^{-1} (t)$.

Besides \eqref{eq-intro-limit-1},  in Theorem \ref{thm-limits} (i), we find 
\be \label{eq-intro-limit-2}
\lim_{t \to 1} \frac{1}{ 1 - t }   \left[   8 \pi -   \frac{1}{ 1 - t} \int_{\Sigma_{t}} H | \nabla u |    \right] 
= 12 \pi \, \m \, \c^{-1} .
\ee
Here $H$ denotes the mean curvature of a regular level set $\Sigma_t$ with respect to $|\nabla u |^{-1} \nabla u $. 

We note that \eqref{eq-intro-limit-1} and \eqref{eq-intro-limit-2} in particular imply 
the ADM mass $\m$ is a geometric invariant of $(M,g)$, since the capacitary function 
$u$ is independent on the coordinates  at $\infty$.

\eqref{eq-intro-limit-1} and \eqref{eq-intro-limit-2} also suggest, as $t \to 1$, 
$$
8 \pi -  \frac{1}{ 1 - t} \int_{\Sigma_{t}} H | \nabla u |  
=  3 \left[ 4 \pi -  \frac{1}{ (1 - t)^2 } \int_{\Sigma_{t}} | \nabla u |^2 \right]  + o \left( ( 1 - t ) \right) .
$$
While this asymptotic comparison was made only via information near $\infty$, 
we show in Theorem \ref{thm-sec-ineq} that, if $M$ has simple topology 
and $g$ has nonnegative scalar curvature, then, at each regular level set $\Sigma_t$, 
\be \label{eq-intro-ineq}
8 \pi -  \frac{1}{ 1 - t} \int_{\Sigma_{t}} H | \nabla u |  
\le  3 \left[ 4 \pi -  \frac{1}{ (1 - t)^2 } \int_{\Sigma_{t}} | \nabla u |^2 \right]  ,
\ee
and ``$=$" holds if and only if $(M,g)$ outside $\Sigma_t$ is isometric to $ \R^3 $ minus a round ball.

Inequality \eqref{eq-intro-ineq} is derived via a monotone quantity along $\{ \Sigma_t \}$, 
see Lemma \ref{lem-monotone-1}. 
Among other things,  we apply \eqref{eq-intro-ineq} to find that 
the quantities in the mass formulae 
\eqref{eq-intro-limit-1} and \eqref{eq-intro-limit-2} 
are actually monotone non-decreasing, that is
\be
\mathcal{A} (t) : =  \frac{1}{1-t} \left[ 8 \pi -   \frac{1}{ 1 - t  } \int_{\Sigma_{t}}  H | \nabla u | \right] 
\ \ \nearrow  \ \  \text{as} \  \ t \nearrow ,
\ee
and 
\be
\mathcal{B} (t) : = \frac{1}{1-t} \left[ 4 \pi -   \frac{1}{ (1 - t )^2 } \int_{\Sigma_{t}}  | \nabla u |^2 \right] 
\  \  \nearrow  \ \  \text{as} \  \ t \nearrow , 
\ee
see Theorem \ref{thm-sec-ineq-2}. Moreover, in Theorem \ref{thm-sec-ineq-2}, 
we show that,  if $u = 0 $ at $ \Sigma = \p M$, then 
\be \label{eq-intro-bdry-ineq-m-a}
8 \pi -  \int_\Sigma H | \nabla u |  \le  12 \pi \, \m \, \c_{_\Sigma}^{-1}  
\ee
and
\be \label{eq-intro-bdry-ineq-m-b}
4 \pi -  \int_\Sigma | \nabla u |^2  \le  4 \pi \, \m \, \c_{_\Sigma}^{-1}  .
\ee
Furthermore, ``$=$" holds in any of these inequalities if and only if $(M,g)$ is isometric to $ \R^3 $ minus a round ball.
 
As an immediate application of \eqref{eq-intro-limit-1} -- \eqref{eq-intro-bdry-ineq-m-b},  
we observe several new arguments implying the $3$-dimensional positive mass theorem, see Section \ref{sec-pmt}.

Inequalities \eqref{eq-intro-bdry-ineq-m-a} and \eqref{eq-intro-bdry-ineq-m-b} also 
give rise to sufficient conditions that imply the positivity of the mass 
via $C^0$-geometry of regions separating the boundary and $\infty$.
For instance, as a special case of Theorem \ref{thm-sec-pmt-bdry}, we show that if $M$ has simply topology and $g$
has nonnegative scalar curvature, then
\be \label{eq-intro-PM-condition}
H \le  \frac{ 8 \pi L^2 }{\Vol(\Omega) } \ \Longrightarrow \ \m > 0 . 
\ee
Here $\Omega $ is a region whose boundary has two components $S_0$ and $S_1$, where $S_1$ encloses $S_0$ and 
$S_0$ encloses $\Sigma$, $L$ is the distance from $ S_1$ to $ S_0 $, and $ \Vol(\Omega)$ is the volume of $(\Omega, g)$.
Another such sufficient condition in Theorem \ref{thm-sec-pmt-bdry-2} shows
\be \label{eq-intro-PM-condition-du}
\int_{S_0} | \nabla v |^2 \le 4 \pi  \ \Longrightarrow \ \m > 0 . 
\ee
Here $ v$ is the harmonic function on $\Omega$ with $ v = 0 $ at $ S_0$ and $ v = 1 $ at $ S_1$.

In \cite{AMO21}, Agostiniani, Mazzieri and Oronzio showed, along $ \{ \Sigma_t \}$,  
\be \label{eq-intro-mono-AMO}
 {F}(t) : =  \frac{1}{ 1 - t }   \left[  4 \pi -   \frac{1}{ 1 - t} \int_{\Sigma_{t}} H | \nabla u |  \ +   \frac{1}{ (1 - t)^2 } \int_{\Sigma_{t}} | \nabla u |^2    \right]  \  \nearrow  \ \text{as} \ t \nearrow  . 
\ee
We observe that $\mathcal{A}(t)$, $\mathcal{B}(t)$ in our work is related to  
 $F(t)$ related by 
\be \label{eq-intro-A-B-F}
 F(t) = \mathcal{A} (t) - \mathcal{B} (t) . 
\ee
In \eqref{eq-difference-mathcal-B} and \eqref{eq-difference-mathcal-A} of the Appendix,  
we give integral identities for the differences 
$$\mathcal{B} (t_2) - \mathcal{B} (t_1), \ \ 
\mathcal{A} (t_2) - \mathcal{A} (t_1), \ \ \text{for} \   t_1 < t_2 . $$
The monotonicity of $F(t)$ can also be seen from 
\eqref{eq-intro-A-B-F}, \eqref{eq-difference-mathcal-B} and \eqref{eq-difference-mathcal-A}. 
Moreover, as a corollary of \eqref{eq-intro-limit-1},  \eqref{eq-intro-limit-2} and  \eqref{eq-intro-A-B-F},  
one has $ \lim_{t \to 1} F (t) = 8 \pi \, \m \c_{_\Sigma}^{-1} $.
Such a limit was shown in \cite{AMO21} in the case that $(M, g)$ is isometric to a spatial Schwarzschild manifold near infinity.

Applying the limits of $\mathcal{A}(t)$, $\mathcal{B}(t)$ as $ t \to 1$ 
and the formulae of  their differences at $ t_1 < t_2 $, 
we derive integral identities for the mass-to-capacity ratio  $ \displaystyle \m \c_{_\Sigma}^{-1}$ 
in  Theorem \ref{thm-sec-mass-integrals}.
Such integral identities can be compared with the mass identity obtained by 
Bray, Kazaras, Khuri and Stern \cite{BKKS19} via harmonic functions having linear asymptotic. 

Inspired by Bray's work \cite{Bray02}, in Section \ref{sec-promoting-ineq} 
we promote inequalities \eqref{eq-intro-ineq}, \eqref{eq-intro-bdry-ineq-m-a} and \eqref{eq-intro-bdry-ineq-m-b} 
to become equality in spatial Schwarzschild spaces.
Among other things, we show in Corollary \ref{cor-sec-more-improved-B-M} that
\be \label{eq-intro-improved-B-M} 
 \left( \frac{1}{\pi} \int_\Sigma  |  \nabla u |^2 \right)^\frac12  \le  \left( \frac{1}{16\pi}  \int_\Sigma   H^2 \right)^\frac12  + 1 .
\ee 
Moreover,  equality holds if and only if $(M, g)$ is isometric to a spatial Schwarzschild manifold  
outside a rotationally symmetric sphere with nonnegative mean curvature. 
In Theorem \ref{thm-sec-more-ineq-1}, we show, given the same triple $(M, g, u)$, 
\be \label{eq-intro-conformal-ineq-m-b-main}
\begin{split}
\frac12  \m  \c_{_\Sigma}^{-1}   \ge 1  -  \left(  \frac{1}{4 \pi}  \int_{\Sigma} | \nabla u |^2 \, d \sigma \right)^\frac12  , 
\end{split}
\ee
and equality holds if and only if 
$(M, g)$ is isometric to a spatial Schwarzschild manifold outside a rotationally symmetric sphere.
As a result of \eqref{eq-intro-improved-B-M} and \eqref{eq-intro-conformal-ineq-m-b-main}, we obtain 
in Theorem \ref{thm-sec-more-ineq-3}
\be \label{eq-B-M-generalization}
\m \c_{_\Sigma}^{-1}  \ge 1 - \left( \frac{1}{16\pi}  \int_\Sigma   H^2 \right)^\frac12 ,
\ee
regardless of the mean curvature $H$ of $\Sigma$. This improves the earlier mentioned 
result of Bray and the author in \cite{BrayMiao08}.

Prompted by \eqref{eq-B-M-generalization}, in Section \ref{sec-m-2-c-less-1} 
we carry out a study of manifolds with boundary satisfying a mass-capacity relation 
\be \label{eq-intro-m-2-c-relation}
 \m \c_{_\Sigma}^{-1} \le 1 . 
\ee
Under this assumption, in Theorem \ref{thm-sec-m-c-l-1} we promote \eqref{eq-intro-bdry-ineq-m-a} to 
\be \label{eq-intro-m-quadratic}
( 2 - \m \c_{_\Sigma}^{-1} ) ( 1 - \m \c_{_\Sigma}^{-1} )  \le \frac{1}{4\pi}  \int_{\Sigma } H | \nabla u | ,
\ee
which picks up an intriguing quadratic term $ ( \m \c_{_\Sigma}^{-1} )^2 $.
Equality in \eqref{eq-intro-m-quadratic} holds if and only if 
$(M, g)$ is isometric to a spatial Schwarzschild manifold outside a rotationally symmetric sphere with
nonnegative mean curvature.

In Corollary \ref{cor-sec-m-c-l-1}, we give a capacity-comparison result for manifolds 
satisfying \eqref{eq-intro-m-2-c-relation} under a condition 
$$ \m H_{max} \le \frac{ 2}{3 \sqrt{3} } .$$
Here $ H_{max} $ is the maximum of the mean curvature of the boundary and the number $ \displaystyle \frac{2}{3 \sqrt{ 3} } $ 
is the maximum value of $ \m H $ evaluated along rotationally symmetric spheres in a spatial Schwarzschild 
manifold with positive mass.

In Corollary \ref{cor-sec-m-c-l-2}, we show manifolds satisfying \eqref{eq-intro-m-2-c-relation} have
the mass bounded  by
\be
\m \le \frac{ r_{_\Sigma}}{2} \left[ 1 + \left( \frac{1}{16 \pi} \int_\Sigma H^2 \right)^\frac12 \right] ,
\ee
where $r_{_\Sigma} $ is the area-radius of $\Sigma$. 
Moreover, the capacitary functions $u$ on these manifolds satisfy 
\be
 \int_{\Sigma} | \nabla u |^2 \ge \pi .
\ee
Heuristically, this suggests such manifolds may not have long cylindrical regions
shielding the boundary, see Remark \ref{rem-long-cylinder}.

Toward the end of Section \ref{sec-m-2-c-less-1}, we place condition \eqref{eq-intro-m-2-c-relation} 
in the context of the Bartnik quasi-local mass \cite{Bartnik89}.
We point out manifolds satisfying \eqref{eq-intro-m-2-c-relation} 
do not contain closed minimal surfaces enclosing the boundary
and static metric extensions with a positive static potential necessarily satisfy \eqref{eq-intro-m-2-c-relation}, 
see Proposition \ref{prop-static}.

We finish this paper with an appendix, including regularization arguments that can be used to verify various monotonicity  
in Section \ref{sec-ineq}.

\vspace{.3cm}

\noindent {\em Acknowledgements.}  
I am indebted to Sven Hirsch for several stimulating conversations related to Sections \ref{sec-mass-integrals} and \ref{sec-promoting-ineq}.  
I also thank Daniel Stern for helpful communications relating to the capacity and level sets of harmonic functions
around the time of a weekly online seminar organized by Hubert Bray at Duke in 2020. 

\section{Detecting the mass at $\infty$} \label{sec-limits}

Let $(M, g)$ denote an asymptotically flat $3$-manifold (with one end) with boundary. 
By this, we mean  
there is a compact set $K \subset M $ such that $M \setminus K$ is diffeomorphic to $\R^3$ minus a ball and, 
with respect to the standard coordinates on $\R^3$, $g$ satisfies
\be \label{eq-AF}
g_{ij} = \delta_{ij} + O ( |x |^{-\tau} ), \ 
 \p g_{ij} = O ( |x|^{-\tau -1}) ,  \ \ \p \p g_{ij} = O (|x|^{-\tau -2} )
\ee
for some constant $ \tau > \frac12 $.
The scalar curvature $R$ of $g$ is also assumed  to be integrable so that the mass $\m $ of $(M,g)$ exists 
(see  \cite{Bartnik86, Chrusciel86} for instance).

Let $ \Sigma$ denote the boundary of $M$.
Let $u$ be the function on $(M, g)$ given by 
\be
\Delta u = 0 \ \text{on} \ M, \  \ u = 0 \ \text{at} \ \Sigma, \ \ \text{and} \ u \to 1 \  \text{at} \  \infty.
\ee
Given any $ t \in [0,1]$, let 
$ \Sigma_t = \{ x \in M \ | \ u (x) = t \} $ denote the level set of $u$.
Below, we collect some basic facts about $u$ and $ \Sigma_t$. 

By the maximum principle, $ \max_{K} u < 1 $, hence $ | x | $ is defined on $ \Sigma_t$ for $t $ close to $1$;
moreover, $ \min_{\Sigma_t} | x | \to \infty $ as $ t \to 1$.
Now suppose $ \tau \in (\frac12, 1)$. 
As $ x \to \infty$, it is known $u$ has an asymptotic expansion 
(see Lemma A.2 in \cite{MMT18} for instance)
\be \label{eq-u-1}
u = 1 -  \c_{_\Sigma} | x |^{-1} + O_2 ( | x |^{-1 -\tau} ) .
\ee
Here $ \c_{_\Sigma} > 0  $ is a positive constant known as the capacity of $\Sigma$ in $(M, g)$. 
Let $ \nabla $ and $ \nabla^2 u $ denote the gradient and the Hessian on $(M, g)$, respectively. 
By \eqref{eq-u-1},
\be \label{eq-nablau-1}
| \nabla u |^2 = \c_{_\Sigma}^2  |x|^{-4}  + O ( |x|^{- 4 - \tau} ) ,
\ee
\be \label{eq-nablau-2}
\begin{split}
(\nabla^2 u )_{ij} = & \ \c_{_\Sigma} |x|^{-3}  \left( - 3 |x|^{-2} x_i x_j + \delta_{ij} \right)+ O ( |x|^{-3 - \tau} ) .
\end{split}
\ee
Thus, $t$ is a regular value if  $t $ is close to $ 1$ and
the mean curvature $H$ of $\Sigma_t $ satisfies 
\be \label{eq-H-est}
H =  \div \left(  | \nabla u |^{-1}  \nabla u  \right) 
=   2  |x|^{-1}    + O (|x|^{-1 - \tau} ) .
\ee
As a result, for $t$ close to $ 1$, $\Sigma_t$ has positive mean curvature and $ \Sigma_t$ is area outer-minimizing as its exterior in $M$ is foliated by mean-convex surfaces $\{ \Sigma_s \}_{s > t }$.

\begin{lma} \label{lem-area}
Let $ | \Sigma_t |$ be the area of $ \Sigma_t $ in $(M, g)$ if $t$ is a regular value of $u$.
Then, as $ t \to 1$, 
\be \label{eq-area-rough}
| \Sigma_t | = 4 \pi \c^2_{_\Sigma} ( 1 - t )^{-2}  + O ( (1 - t)^{ \tau - 2  } ) .
\ee
\end{lma}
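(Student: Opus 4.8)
The plan is to compute the area $|\Sigma_t|$ directly from the asymptotic expansion of $u$ near infinity, writing the area element on $\Sigma_t$ in terms of the level-set foliation. First I would fix $t$ close to $1$ so that $\Sigma_t$ is a regular level set lying entirely in the asymptotic region (guaranteed by the remarks preceding the lemma, since $\min_{\Sigma_t}|x|\to\infty$). The key idea is to foliate a neighborhood of infinity by the level sets $\{\Sigma_s\}_{s>t}$ and relate $\Sigma_t$ to a coordinate sphere $\{|x|=r\}$ via the expansion $u = 1 - \c_{_\Sigma}|x|^{-1} + O_2(|x|^{-1-\tau})$ from \eqref{eq-u-1}. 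Solving this for $|x|$ on $\Sigma_t$ gives $|x| = \c_{_\Sigma}(1-t)^{-1}(1 + O((1-t)^{\tau}))$ to leading order, so $\Sigma_t$ is a graph over the coordinate sphere of radius $r_t := \c_{_\Sigma}(1-t)^{-1}$ with small $C^1$ graph function.

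The main computation is then to estimate the induced area. One route: use the coarea formula. For $s \in (t,1)$,
\[
\int_{\{u > t\}\cap\{|x|<\rho\}} |\nabla u|\, dV_g = \int_t^{s(\rho)} |\Sigma_\tau| \, \frac{d\tau}{?}
\]
is a bit awkward because $|\nabla u|$ varies; cleaner is to parametrize directly. Since $\Sigma_t$ is a small $C^1$-graph over $\{|x| = r_t\}$ and $g_{ij} = \delta_{ij} + O(r_t^{-\tau})$, the induced metric on $\Sigma_t$ differs from the round metric of radius $r_t$ by terms of relative size $O(r_t^{-\tau})$ together with the graph corrections, which are also $O(r_t^{-\tau})$ by \eqref{eq-nablau-1} and the implicit function theorem applied to $u$. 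Hence
\[
|\Sigma_t| = 4\pi r_t^2 \left( 1 + O(r_t^{-\tau}) \right) = 4\pi \c_{_\Sigma}^2 (1-t)^{-2} + O\!\left((1-t)^{\tau - 2}\right),
\]
since $r_t^{2-\tau} = \c_{_\Sigma}^{2-\tau}(1-t)^{\tau-2}$, which is exactly \eqref{eq-area-rough}. I would carry this out by writing the area integral $|\Sigma_t| = \int_{\mathbb{S}^2} \sqrt{\det(g|_{\Sigma_t})}\, d\omega$ in graph coordinates and expanding the determinant, keeping track that every error term — from the metric coefficients, from the graph function and its derivatives — is controlled by $O(|x|^{-\tau})$ relative to the leading term, uniformly as $t\to1$.

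The main obstacle is bookkeeping rather than conceptual: one must justify that the level set $\Sigma_t$ is indeed a small $C^1$ (even $C^2$, using $O_2$ in \eqref{eq-u-1}) perturbation of the round coordinate sphere $\{|x| = r_t\}$, with perturbation size $O(r_t^{-\tau})$ in the appropriate scaled norm, and that this bound survives integration over $\Sigma_t$. This follows from the implicit function theorem applied to $u(x) - t = 0$ using the expansions \eqref{eq-u-1}, \eqref{eq-nablau-1}: near $\{|x| = r_t\}$ one has $|\nabla u| \sim \c_{_\Sigma} r_t^{-2}$ bounded away from $0$ after rescaling, so $\Sigma_t = \{ |x| = r_t(1 + \psi(\omega)) \}$ with $\|\psi\|_{C^1(\mathbb{S}^2)} = O(r_t^{-\tau})$. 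Combining this with the decay of $g - \delta$ gives the induced-area estimate, and the stated error exponent $\tau - 2$ drops out after substituting $r_t = \c_{_\Sigma}(1-t)^{-1}$. Alternatively, if one prefers to avoid the implicit function theorem, one can integrate $H$ over the region between $\Sigma_t$ and a large coordinate sphere using the divergence identity for $|\nabla u|^{-1}\nabla u$ together with \eqref{eq-H-est}; but the graph approach is the most direct.
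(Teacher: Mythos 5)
Your argument is correct, but it is a genuinely different route from the paper's. The paper's proof is a squeeze: setting $r_-(t)=\min_{\Sigma_t}|x|$ and $r_+(t)=\max_{\Sigma_t}|x|$, it uses the fact (established just before the lemma from the mean-convex foliation by level sets, resp.\ by coordinate spheres) that both $\Sigma_t$ and the coordinate spheres $S_r$ are area outer-minimizing, so that $|S_{r_-(t)}|\le|\Sigma_t|\le|S_{r_+(t)}|$; combining this with $|S_r|=4\pi r^2+O(r^{2-\tau})$ and $r_\pm(t)=\c_{_\Sigma}(1-t)^{-1}+O((1-t)^{\tau-1})$ gives \eqref{eq-area-rough} with no parametrization of $\Sigma_t$ at all — only the $C^0$ relation \eqref{eq-x-and-t} is needed. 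You instead realize $\Sigma_t$ as a graph over the coordinate sphere of radius $r_t=\c_{_\Sigma}(1-t)^{-1}$ and compute the induced area directly; this is sound, and your key quantitative claim $\|\psi\|_{C^1(\mathbb{S}^2)}=O(r_t^{-\tau})$ does follow from the implicit function theorem because the tangential part of $\nabla u$ along coordinate spheres is $O(|x|^{-2-\tau})$ (one order better than the radial part $\sim\c_{_\Sigma}|x|^{-2}$), which is exactly what the $O_2$ in \eqref{eq-u-1} provides. The trade-off: the paper's comparison argument is shorter and sidesteps all $C^1$ bookkeeping, but it leans on the outer-minimizing property of the two families of surfaces; your graph computation is more self-contained and would still work in settings where an outer-minimizing comparison is unavailable, at the cost of having to track the graph function and the metric corrections through the area integrand.
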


\begin{proof}
By \eqref{eq-u-1}, as $ t \to 1$, 
\be \label{eq-x-and-t}
|x| =   \c_{_\Sigma}  (1 -t)^{-1}  + O ( (1 - t )^{ \tau - 1 }  ) .
\ee
Let $r_- (t) = \min_{\Sigma_t} |x| $ and $r_+(t) = \max_{\Sigma_t} |x|$. 
Since $ \Sigma_t$ and the coordinate sphere $ S_r : =  \{ |x|= r \}$ are both area outer-minimizing in $(M, g)$, for $t $ close to $1$ and
large $r$, respectively, we have
\be \label{eq-area-est-1}
| S_{r_- (t) } | \le |\Sigma_t| \le |  S_{r_+ (t)  } | . 
\ee
For large $r$, \eqref{eq-AF} implies 
\be \label{eq-area-coordinate-sphere}
| S_{r} | = 4 \pi r^2 + O ( r^{2 - \tau} ).
\ee
Thus, \eqref{eq-area-rough} follows from \eqref{eq-x-and-t} --  \eqref{eq-area-coordinate-sphere}.
\end{proof}

\begin{lma} \label{lem-limit-1}
As $ t \to 1$, 
\bee
\frac{1}{(1-t)} \int_{\Sigma_t}  H | \nabla u | =  8  \pi + O ( ( 1 - t)^\tau ) 
\eee
and
\bee
 \frac{1}{(1-t)^2} \int_{\Sigma_t } | \nabla u |^2 = 4 \pi + O ( ( 1 - t)^\tau )  .
\eee
\end{lma}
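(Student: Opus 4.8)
The plan is to evaluate the two flux integrals directly using the asymptotic expansions \eqref{eq-u-1}--\eqref{eq-H-est} together with the area estimate from Lemma \ref{lem-area}. Since both integrands are supported on $\Sigma_t$ for $t$ close to $1$, where $|x|$ is large and comparable to $\c_{_\Sigma}(1-t)^{-1}$ by \eqref{eq-x-and-t}, the strategy is to show each integrand is a constant (the leading coefficient) plus a relative error of size $O(|x|^{-\tau})$, and then multiply by $|\Sigma_t| = 4\pi \c_{_\Sigma}^2 (1-t)^{-2} + O((1-t)^{\tau-2})$.

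First I would handle $\int_{\Sigma_t} |\nabla u|^2$. By \eqref{eq-nablau-1}, $|\nabla u|^2 = \c_{_\Sigma}^2 |x|^{-4} + O(|x|^{-4-\tau})$, and on $\Sigma_t$ the relation \eqref{eq-x-and-t} gives $|x|^{-4} = \c_{_\Sigma}^{-4}(1-t)^4 + O((1-t)^{4+\tau}) \cdot \c_{_\Sigma}^{-5}$ — more precisely $|x|^{-4} = \c_{_\Sigma}^{-4}(1-t)^4 (1 + O((1-t)^{\tau}))$. Hence $|\nabla u|^2 = \c_{_\Sigma}^{-2}(1-t)^4 + O((1-t)^{4+\tau})$ pointwise on $\Sigma_t$. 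Integrating over $\Sigma_t$ and using Lemma \ref{lem-area}:
\[
\int_{\Sigma_t} |\nabla u|^2 = \left( \c_{_\Sigma}^{-2}(1-t)^4 + O((1-t)^{4+\tau}) \right)\left( 4\pi \c_{_\Sigma}^2 (1-t)^{-2} + O((1-t)^{\tau-2}) \right) = 4\pi (1-t)^2 + O((1-t)^{2+\tau}),
\]
which after dividing by $(1-t)^2$ gives the second identity. For the first, combine \eqref{eq-nablau-1} and \eqref{eq-H-est}: $H|\nabla u| = \big(2|x|^{-1} + O(|x|^{-1-\tau})\big)\big(\c_{_\Sigma}|x|^{-2} + O(|x|^{-2-\tau})\big) = 2\c_{_\Sigma}|x|^{-3} + O(|x|^{-3-\tau})$, since $|\nabla u| = \c_{_\Sigma}|x|^{-2} + O(|x|^{-2-\tau})$. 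On $\Sigma_t$, \eqref{eq-x-and-t} gives $|x|^{-3} = \c_{_\Sigma}^{-3}(1-t)^3(1 + O((1-t)^\tau))$, so $H|\nabla u| = 2\c_{_\Sigma}^{-2}(1-t)^3 + O((1-t)^{3+\tau})$ on $\Sigma_t$. Multiplying by $|\Sigma_t|$ from Lemma \ref{lem-area} yields $\int_{\Sigma_t} H|\nabla u| = 8\pi(1-t) + O((1-t)^{1+\tau})$, and dividing by $(1-t)$ finishes the proof.

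The main point to be careful about — and the only real obstacle — is bookkeeping the error terms consistently: the pointwise expansions of $|\nabla u|^2$ and $H|\nabla u|$ must be expressed as a leading term times $(1 + O((1-t)^\tau))$ on $\Sigma_t$ (using \eqref{eq-x-and-t} to convert powers of $|x|$ into powers of $(1-t)$), so that when multiplied against $|\Sigma_t|$ the cross terms $O((1-t)^{4+\tau}) \cdot O((1-t)^{-2})$ and leading $\times O((1-t)^{\tau-2})$ both land at the claimed order $O((1-t)^\tau)$ after the final division. One should also note that $\Sigma_t$ is a regular level set for $t$ near $1$ (already established in the text via \eqref{eq-nablau-2}--\eqref{eq-H-est}), so the integrals and Lemma \ref{lem-area} apply, and that the $O$-symbols in \eqref{eq-nablau-1} and \eqref{eq-H-est} are uniform over $\Sigma_t$ since $\min_{\Sigma_t}|x| \to \infty$. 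No integration by parts or curvature identities are needed here; this lemma is purely a computation at infinity, in contrast to the interior monotonicity arguments used later.
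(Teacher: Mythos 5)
Your proof is correct, and the error bookkeeping works out: both cross terms land at $O((1-t)^\tau)$ after the final division, exactly as you say. The route is slightly different from the paper's, though. You convert the pointwise expansions of $|\nabla u|^2$ and $H|\nabla u|$ into constants-plus-errors on $\Sigma_t$ and then multiply by the area estimate $|\Sigma_t| = 4\pi \c_{_\Sigma}^2(1-t)^{-2} + O((1-t)^{\tau-2})$ from Lemma \ref{lem-area}. The paper instead exploits the exact flux identity $\int_{\Sigma_t}|\nabla u| = 4\pi\c_{_\Sigma}$ (constant in $t$ by harmonicity and the divergence theorem): it writes, e.g., $\frac{1}{1-t}\int_{\Sigma_t} H|\nabla u| - 8\pi = \int_{\Sigma_t}\left(\frac{H}{1-t} - \frac{2}{\c_{_\Sigma}}\right)|\nabla u|$, and the bracket is $O((1-t)^\tau)$ pointwise, so the whole thing is $O((1-t)^\tau)$ times the fixed total flux. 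That version never touches Lemma \ref{lem-area}, whose proof rests on the outer-minimizing property of the level sets and coordinate spheres, so it is marginally lighter; your version is equally valid but imports that area estimate as an ingredient. Both yield the same error order, and your observations about regularity of $\Sigma_t$ near $t=1$ and uniformity of the $O$-symbols are the right things to check.
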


\begin{proof}
By \eqref{eq-H-est} and \eqref{eq-x-and-t}, 
\be \label{eq-H-and-t}
H = 2 \c_{_\Sigma}^{-1} ( 1 - t) + O ( (1-t)^{1+\tau}) .
\ee
Therefore, using the fact $\int_{\Sigma_t} | \nabla u | = 4 \pi \c_{_\Sigma}$, one has
\bee
\begin{split}
\left( \frac{1}{1-t} \int_{\Sigma_t} H | \nabla u | \right)  - 8\pi = & \  \int_{\Sigma_t}  \left( \frac{H}{1-t}  -  \frac{2}{\c_{_\Sigma} } \right) | \nabla u | \\
= & \ O ( (1-t)^\tau ) .
\end{split}
\eee
Similarly,
by \eqref{eq-nablau-1} and \eqref{eq-x-and-t}, 
\be \label{eq-nabu-and-t}
| \nabla u | = \c_{_\Sigma}^{-1} (1-t)^2 + O ( ( 1 - t )^{2 + \tau} ) .
\ee
Therefore,
\bee
\begin{split}
\left( \frac{1}{(1-t)^2} \int_{\Sigma_t}  | \nabla u |^2 \right)  - 4\pi = & \ 
\left(\int_{\Sigma_t}   \frac{| \nabla u |}{(1-t)^2}  - \frac{1}{\c_{_\Sigma} } \right) | \nabla u | \\
= & \ O ( ( 1 - t )^{\tau} ) .
\end{split}
\eee
\end{proof}

\begin{lma} \label{lem-nablau-tangent}
As $ t \to 1$, the gradient of $ | \nabla u | $ on $ \Sigma_t$  satisfies
\be \label{eq-nablau-3}
 | \nabla_{_{\Sigma_t}} | \nabla u | | = O ( | x |^{- 3 -  \tau} ) .
\ee
\end{lma}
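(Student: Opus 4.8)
The plan is to estimate the tangential derivative $\nabla_{_{\Sigma_t}}|\nabla u|$ directly from the asymptotic expansions already established, by relating it to the full gradient $\nabla|\nabla u|$ on $(M,g)$ and then projecting off the normal component. First I would differentiate the expansion \eqref{eq-nablau-1}, writing $|\nabla u|^2 = \c_{_\Sigma}^2 |x|^{-4} + O(|x|^{-4-\tau})$; combined with the derivative bounds that come from the $O_2$ (respectively two-derivative) control in \eqref{eq-AF} and \eqref{eq-u-1}, this yields $\nabla_k \big(|\nabla u|^2\big) = -4\,\c_{_\Sigma}^2 |x|^{-6} x_k + O(|x|^{-5-\tau})$. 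Dividing by $2|\nabla u| = 2\c_{_\Sigma}|x|^{-2} + O(|x|^{-2-\tau})$ gives $\nabla_k |\nabla u| = -2\,\c_{_\Sigma}|x|^{-4} x_k + O(|x|^{-3-\tau})$, so the ambient gradient $|\nabla|\nabla u||$ is $O(|x|^{-3})$ to leading order, with an $O(|x|^{-3-\tau})$ error term. The point of the lemma is that the leading $O(|x|^{-3})$ piece is \emph{purely normal} to $\Sigma_t$, so it drops out upon taking the tangential projection.

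The second step is therefore to control the unit normal $\nu_t = |\nabla u|^{-1}\nabla u$ of $\Sigma_t$ and compare it with the radial direction $\nu = |x|^{-1}x$. From $\nabla u = \c_{_\Sigma}|x|^{-2}\nu + O(|x|^{-2-\tau})$ (differentiating \eqref{eq-u-1}) one gets $\nu_t = \nu + O(|x|^{-\tau})$. The leading term of $\nabla|\nabla u|$ computed above is $-2\c_{_\Sigma}|x|^{-3}\,\nu$, which is parallel to $\nu$ and hence, up to the $O(|x|^{-\tau})$ discrepancy between $\nu$ and $\nu_t$, parallel to $\nu_t$. The tangential projection $\nabla_{_{\Sigma_t}}|\nabla u| = \nabla|\nabla u| - \langle \nabla|\nabla u|, \nu_t\rangle\,\nu_t$ then kills the leading $O(|x|^{-3})$ term exactly, leaving: the $O(|x|^{-3-\tau})$ error in $\nabla|\nabla u|$ itself, plus the leading term $-2\c_{_\Sigma}|x|^{-3}\nu$ contracted against the $O(|x|^{-\tau})$ error in $\nu_t$, which is again $O(|x|^{-3-\tau})$. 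Both contributions are $O(|x|^{-3-\tau})$, which is precisely \eqref{eq-nablau-3}.

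The main obstacle, such as it is, is bookkeeping: making sure that the one- and two-derivative decay assumptions in \eqref{eq-AF} together with the $O_2$-regularity of the expansion \eqref{eq-u-1} genuinely propagate to an $O(|x|^{-3-\tau})$ bound on $\nabla^2 u$-type quantities and on $\nabla|\nabla u|$, rather than merely $O(|x|^{-3})$ with no gain. Concretely one must check that $\nabla(|\nabla u|^2) = 2\langle \nabla^2 u, \nabla u\rangle$ and that \eqref{eq-nablau-2} supplies $\nabla^2 u = \c_{_\Sigma}|x|^{-3}(\delta - 3|x|^{-2}x\otimes x) + O(|x|^{-3-\tau})$, so that the contraction with $\nabla u = \c_{_\Sigma}|x|^{-2}\nu + O(|x|^{-2-\tau})$ produces exactly $-2\c_{_\Sigma}^2|x|^{-6}x + O(|x|^{-5-\tau})$ — the algebra $(\delta - 3\nu\otimes\nu)\nu = \nu - 3\nu = -2\nu$ is what makes the normal-direction coefficient clean. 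Once this is in hand, the cancellation in the tangential projection is automatic, and rewriting $|x|$ in terms of $1-t$ via \eqref{eq-x-and-t} is not even needed since the statement is phrased in $|x|$. I would present the argument in roughly this order: (1) differentiate \eqref{eq-nablau-1} using \eqref{eq-nablau-2} to get $\nabla|\nabla u| = -2\c_{_\Sigma}|x|^{-4}x + O(|x|^{-3-\tau})$; (2) expand $\nu_t = \nu + O(|x|^{-\tau})$; (3) project onto $\Sigma_t$ and observe the leading terms cancel, leaving $O(|x|^{-3-\tau})$.
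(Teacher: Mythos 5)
Your proposal is correct and rests on the same computation as the paper's proof: both reduce to the identity $\nabla(|\nabla u|^2) = 2\,\nabla^2 u(\cdot,\nabla u)$ together with the expansions \eqref{eq-u-1}--\eqref{eq-nablau-2}, and both exploit the fact that the leading term of this expression is purely radial (the paper phrases this by contracting against a tangent vector $V$ with $\sum_i V^i x_i = O(|x|^{1-\tau})$, you phrase it by projecting off $\nu_t = \nu + O(|x|^{-\tau})$ — the same cancellation). No gap; the bookkeeping you flag is exactly what the paper's displayed estimate \eqref{eq-V-u} carries out.
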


\begin{proof}
Write $ \nabla u = (\nabla u)^j  \p_j $. By \eqref{eq-u-1},  
\bee
(\nabla u)^j  = \c_{_\Sigma} | x |^{-2 }  |x|^{-1} x_j + O( | x |^{-2 -\tau} ) .
\eee
Let $ V = V^i \p_i $ denote any unit vector tangent to $\Sigma_t$. 
Then $ V^i = O (1)$ and the fact $ \la V, \nabla u \ra = 0 $ shows
\be \label{eq-V-1}
 \sum_i V^i ( \nabla u )^i  = O ( | x |^{-2 - \tau} ) , \ \text{and hence} \ \sum_i V^i x_i = O ( | x |^{1- \tau} ). 
\ee
Therefore, by  \eqref{eq-nablau-2} and  \eqref{eq-V-1}, 
\be \label{eq-V-u}
\begin{split}
V ( | \nabla u |^2 ) = & \   2 ( \nabla^2 u )( V, \nabla u ) \\
= & \ 2 \c_{_\Sigma} |x|^{-3}  \left[ - 3 |x|^{-2} x_i  V^i  x_j (\nabla u )^j  + \delta_{ij} V^i (\nabla u )^j  \right]   ( 1 + O ( |x|^{ - \tau} ) ) \\
= & \ O ( |x|^{- 5 - \tau} ) .
\end{split}
\ee
Thus, \eqref{eq-nablau-3} follows from \eqref{eq-V-u} and  \eqref{eq-nablau-1}.
\end{proof}

\begin{lma} \label{lem-ringPi}
As $ t \to 1$, the traceless part of the second fundamental form $\Pi$ of $\Sigma_t$, denoted by $ \mathring{\Pi}$, satisfies 
\be \label{eq-ringPi-1}
| \mathring{\Pi} | = O ( | x |^{ - 1 - \tau} ) ,
\ee
and the Gauss curvature $K$ of $\Sigma_t$ satisfies
$
K = |x|^{-2} + O ( |x|^{-2 - \tau} ).
$
\end{lma}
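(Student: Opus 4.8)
The plan is to compute the second fundamental form $\Pi$ of $\Sigma_t$ directly from the asymptotic expansions \eqref{eq-u-1}--\eqref{eq-nablau-2}, extract its traceless part, and then obtain the Gauss curvature via the Gauss equation. First I would recall that for a regular level set $\Sigma_t$ with unit normal $\nu = |\nabla u|^{-1}\nabla u$, the second fundamental form is $\Pi(X,Y) = |\nabla u|^{-1} (\nabla^2 u)(X,Y)$ for $X, Y$ tangent to $\Sigma_t$. From \eqref{eq-nablau-2} one has $(\nabla^2 u)_{ij} = \c_{_\Sigma}|x|^{-3}(-3|x|^{-2}x_i x_j + \delta_{ij}) + O(|x|^{-3-\tau})$, and from \eqref{eq-nablau-1} one has $|\nabla u| = \c_{_\Sigma}|x|^{-2} + O(|x|^{-2-\tau})$. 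Contracting $(\nabla^2 u)_{ij}$ with tangent vectors $X^i, Y^j$: since any unit tangent vector $V$ satisfies $\sum_i V^i x_i = O(|x|^{1-\tau})$ by \eqref{eq-V-1} in the proof of Lemma \ref{lem-nablau-tangent}, the leading $x_i x_j$ term contributes only $O(|x|^{-3}\cdot|x|^{-2}\cdot|x|^{2-2\tau}) = O(|x|^{-3-2\tau})$, which is lower order. Hence on tangent vectors $(\nabla^2 u)(X,Y) = \c_{_\Sigma}|x|^{-3}\,g(X,Y) + O(|x|^{-3-\tau})$, so that $\Pi(X,Y) = |x|^{-1} g(X,Y) + O(|x|^{-1-\tau})$, i.e.\ $\Pi = |x|^{-1} \gamma + O(|x|^{-1-\tau})$ where $\gamma$ is the induced metric on $\Sigma_t$. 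Taking the traceless part kills the leading umbilic term, leaving $|\mathring{\Pi}| = O(|x|^{-1-\tau})$, which is \eqref{eq-ringPi-1}.

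For the Gauss curvature, I would use the Gauss equation $2K = R - 2\Ric(\nu,\nu) + H^2 - |\Pi|^2$, where $R$ is the scalar curvature of $(M,g)$ and $\nu$ is the unit normal. By the asymptotic flatness \eqref{eq-AF}, $R = O(|x|^{-\tau-2})$ and $\Ric(\nu,\nu) = O(|x|^{-\tau-2})$. From \eqref{eq-H-est}, $H = 2|x|^{-1} + O(|x|^{-1-\tau})$, so $H^2 = 4|x|^{-2} + O(|x|^{-2-\tau})$. From the umbilic expansion of $\Pi$ just derived, $|\Pi|^2 = \tfrac12 H^2 + |\mathring\Pi|^2 = 2|x|^{-2} + O(|x|^{-2-\tau})$. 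Therefore $2K = 4|x|^{-2} - 2|x|^{-2} + O(|x|^{-2-\tau}) = 2|x|^{-2} + O(|x|^{-2-\tau})$, giving $K = |x|^{-2} + O(|x|^{-2-\tau})$ as claimed.

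The only mild subtlety I anticipate is bookkeeping the error terms when passing from ambient tensor expressions to their restrictions on $\Sigma_t$: one must be careful that the induced metric $\gamma$ on $\Sigma_t$ itself differs from the Euclidean induced metric by $O(|x|^{-\tau})$, and that expressing $\Pi$ via $\nabla^2 u$ rather than $\partial^2 u$ introduces Christoffel-symbol corrections of size $O(|x|^{-\tau-1})\cdot|\partial u| = O(|x|^{-3-\tau})$, which are harmlessly absorbed into the stated error. I expect this step — carefully tracking that all correction terms are indeed $O(|x|^{-1-\tau})$ for $\mathring\Pi$ and $O(|x|^{-2-\tau})$ for $K$ — to be the main, though entirely routine, obstacle; there is no conceptual difficulty once the algebra is organized. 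Alternatively, for $K$ one could integrate: by Gauss--Bonnet $\int_{\Sigma_t} K = 4\pi$ (each $\Sigma_t$ being a topological sphere for $t$ near $1$), which combined with $|\Sigma_t| = 4\pi\c_{_\Sigma}^2(1-t)^{-2} + O((1-t)^{\tau-2})$ from Lemma \ref{lem-area} and \eqref{eq-x-and-t} is consistent with $K = |x|^{-2} + O(|x|^{-2-\tau})$, but the pointwise Gauss-equation argument above is the cleanest route.
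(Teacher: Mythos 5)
Your proposal is correct and follows essentially the same route as the paper: contract $(\nabla^2 u)_{ij}$ from \eqref{eq-nablau-2} with unit tangent vectors, use $\sum_i V^i x_i = O(|x|^{1-\tau})$ from \eqref{eq-V-1} to kill the $x_ix_j$ term, divide by $|\nabla u|$ to get $\Pi = |x|^{-1}\gamma + O(|x|^{-1-\tau})$, subtract $\tfrac12 H\gamma$ using \eqref{eq-H-est}, and then invoke the (twice-contracted) Gauss equation for $K$. The paper leaves the Gauss-equation step as a one-line remark; your explicit computation of it, including the curvature decay $R,\,\Ric(\nu,\nu) = O(|x|^{-2-\tau})$ from \eqref{eq-AF}, is exactly what is intended.
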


\begin{proof}
Let $ V =V^i \p_i $ and $ W = W^j \p_j $ be any two unit vectors tangent to $\Sigma_t$ at a given point. 
Then
$  \delta_{ij} V^i W^j = g( V, W ) + O ( |x|^{-\tau} ) $.
As $ \nabla^2 u (V, W) = | \nabla u | \Pi ( V, W )$, one has
\be \label{eq-hessian-u-v-w}
\begin{split}
| \nabla u | \,  \Pi ( V, W) = & \  ( \nabla^2 u)_{ij} V^i W^j \\
= & \ \c_{_\Sigma} |x|^{-3}  \left( - 3 |x|^{-2} x_i V^i x_j W^j + \delta_{ij} V^i W^j   \right) ( 1 + O ( |x|^{ - \tau} ) )  \\
= & \ \c_{_\Sigma} |x|^{-3}   g( V, W)   + O ( |x|^{-3 - \tau} ) ,
\end{split}
\ee
where one used \eqref{eq-nablau-2} and \eqref{eq-V-1}.
Therefore, by \eqref{eq-nablau-1}, 
\be \label{eq-Pi-est}
\begin{split}
\Pi (V, W) = & \   |x|^{-1}   g( V, W )   + O ( | x|^{-1 - \tau} ) .
\end{split} 
\ee
This combined with \eqref{eq-H-est} shows 
\bee
\mathring{\Pi} (V, W)  =  \Pi (V, W) - \frac{1}{2} H g( V, W ) 
=  O ( | x|^{-1 - \tau} ) ,
\eee
which proves \eqref{eq-ringPi-1}.
The conclusion on the Gauss curvature follows from  \eqref{eq-Pi-est}, \eqref{eq-H-est} and
the Gauss equation.
\end{proof}

\begin{lma} \label{lem-derivative-Pi}
If $(M, g)$ satisfies  $ \p \p \p g_{ij} = O (|x|^{-3 - \tau} )$ in \eqref{eq-AF}, then
\be \label{eq-derivative-Pi} 
| D \, \mathring \Pi | = O ( | x|^{-2 - \tau} ).
\ee
Here $D$ denote covariant differentiation on $\Sigma_t$. 
\end{lma}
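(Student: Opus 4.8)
The plan is to estimate $D\mathring\Pi$ the same way $\mathring\Pi$ itself was estimated in Lemma \ref{lem-ringPi}, namely by transferring the computation to the ambient Euclidean-like coordinates and differentiating the asymptotic expansions one more time. The hypothesis $\partial\partial\partial g_{ij}=O(|x|^{-3-\tau})$ upgrades \eqref{eq-u-1} so that $u = 1 - \c_{_\Sigma}|x|^{-1} + O_3(|x|^{-1-\tau})$, i.e.\ the third ambient derivatives of the error term in $u$ are $O(|x|^{-4-\tau})$; correspondingly $\nabla^2 u$ has an expansion of the form \eqref{eq-nablau-2} whose ambient first derivatives satisfy $\partial(\nabla^2 u)_{ij} = \c_{_\Sigma}|x|^{-4}(\text{explicit homogeneous rational term}) + O(|x|^{-4-\tau})$. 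The explicit leading term is, up to Euclidean contractions, the third derivative of $\c_{_\Sigma}|x|^{-1}$.

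First I would fix a point $p\in\Sigma_t$, choose unit tangent vectors $V,W$ to $\Sigma_t$ and extend them suitably, and start from the identity $|\nabla u|\,\Pi(V,W) = (\nabla^2 u)(V,W)$ that was already used, now differentiating it along a third unit tangent direction $Z$ on $\Sigma_t$. This produces terms of three types: (a) $Z$ hitting $\nabla^2 u$, which by the differentiated expansion and the tangency estimate $\sum_i V^i x_i = O(|x|^{1-\tau})$ from \eqref{eq-V-1} (and the analogous fact for $W$, $Z$) contributes $O(|x|^{-2-\tau})$ after one checks the would-be leading term $\c_{_\Sigma}|x|^{-3}\cdot g(V,W)_{,Z}$-type piece is exactly what is subtracted off when one passes to the traceless part; (b) $Z$ hitting $|\nabla u|$, controlled by Lemma \ref{lem-nablau-tangent}, which gives $O(|x|^{-3-\tau})\cdot|\Pi|=O(|x|^{-4-\tau})$ after dividing by $|\nabla u|\sim\c_{_\Sigma}|x|^{-2}$; (c) $Z$ hitting $V$ or $W$, i.e.\ the second fundamental form of $\Sigma_t$ appearing through $D_Z V$, which by \eqref{eq-Pi-est} is $O(|x|^{-1})$ and is again absorbed into the trace part or controlled by $|\mathring\Pi|=O(|x|^{-1-\tau})$ from Lemma \ref{lem-ringPi}. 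Assembling (a)--(c), dividing through by $|\nabla u|$, and subtracting $\tfrac12 (DH)\,g + \tfrac12 H\,(Dg)$-type terms to isolate $D\mathring\Pi$ (using $H = 2|x|^{-1}+O(|x|^{-1-\tau})$ from \eqref{eq-H-est} and that the intrinsic derivative of the induced metric vanishes), one is left with $|D\mathring\Pi| = O(|x|^{-2-\tau})$.

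The main obstacle is the bookkeeping in step (a): one must verify that the only term in $\partial(\nabla^2 u)$ that is not already of size $O(|x|^{-4-\tau})$, when contracted with three tangent vectors and combined with the Christoffel correction terms from differentiating along $\Sigma_t$ rather than in $\R^3$, assembles into something proportional to $g(V,W)$ in the $Z$-slot (plus permutations) — i.e.\ into the trace part $DH\otimes g$ — so that it cancels upon projecting to $D\mathring\Pi$. Concretely, the leading piece of $\partial_k(\nabla^2 u)_{ij}$ is the Euclidean third derivative of $-\c_{_\Sigma}|x|^{-1}$, which is a homogeneous degree $-4$ expression built from $\delta$'s and $x_ix_jx_kx_l$; contracting any index with a tangent vector $V^i$ costs a factor $|x|^{-\tau}$ via \eqref{eq-V-1} except when the index is paired into a $\delta$ with another tangent index, and those surviving $\delta$-paired terms are precisely the symmetrized $g\otimes(\text{something})$ that lives in the trace. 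I would also need to confirm that extending $V,W,Z$ off the point $p$ does not introduce uncontrolled terms, which is standard since the extension only enters through $D_Z V$ etc.\ already accounted for in (c). Once this cancellation is made explicit the estimate \eqref{eq-derivative-Pi} follows, and it is of exactly the form one expects: differentiating a quantity of size $O(|x|^{-1-\tau})$ on a surface of radius $\sim|x|$ costs one power of $|x|^{-1}$.
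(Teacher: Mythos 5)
Your proposal is correct and follows essentially the same route as the paper: differentiate $\Pi = |\nabla u|^{-1}\,\nabla^2 u|_{T\Sigma_t}$ in a tangential direction (in a frame normal at the point), control the derivative of $|\nabla u|$ by Lemma \ref{lem-nablau-tangent}, use the tangency estimate $\sum_i V^i x_i = O(|x|^{1-\tau})$ from \eqref{eq-V-1}, and subtract $\tfrac12 V_\mu(H)\delta_{\alpha\beta}$. The only place you overcomplicate is your ``main obstacle'': no cancellation against the trace part is actually needed, because every term of the differentiated leading piece --- $V_\mu\bigl(\c_{_\Sigma}|x|^{-3} g(V_\alpha,V_\beta)\bigr)$, and likewise every $\delta$-and-$x$ monomial of $\partial^3(|x|^{-1})$ contracted with three tangent vectors --- retains at least one factor $x_i V^i = O(|x|^{1-\tau})$, so $D\Pi$ and $V_\mu(H)$ are each separately $O(|x|^{-2-\tau})$, which is exactly how the paper argues.
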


\begin{proof}
If $ g$ satisfies the higher order derivatives decay assumption, then $u$ satisfies 
$$ u = 1 - \c_{_\Sigma} |x|^{-1} + O_3 (|x|^{-1-\tau} ) $$  
(see the proof of Lemma A.2 in \cite{MMT18} for instance). 
The terms $O(|x|^{-3-\tau})$ in \eqref{eq-nablau-2}
and $O (|x|^{-1-\tau})$ in \eqref{eq-H-est} 
are then replaced by $ O_1 ( |x|^{-3-\tau} )$ and $O_1 (|x|^{-1-\tau})$, respectively. 

To prove \eqref{eq-derivative-Pi}, let $\{ V_\alpha \}_{\alpha = 1, 2} $ be a local orthonormal frame around a given point $p$ on $\Sigma_t$. 
By definition, 
$$ ( D_{V_\mu}  \mathring \Pi )  (V_\alpha , V_\beta ) = \left( D_{V_\mu}  \Pi \right)  (V_\alpha , V_\beta ) 
- \frac12  V_\mu ( H )  \delta_{\alpha \beta} . $$
By \eqref{eq-H-est} and  \eqref{eq-V-1},
\bee
\begin{split}
V_{\mu} ( H) = & \ 2 (-1) |x|^{-2} V_\mu (  |x| )  + O ( |x|^{-2 - \tau} ) =  O ( |x|^{-2 - \tau} ) .
\end{split}
\eee
To estimate $ D \Pi$, one may  assume $\{ V_\alpha \} $ is normal at $p$ so that 
\bee
\begin{split}
 ( D_{V_\mu} {\Pi}) (V_\alpha, V_\beta) = & \ V_\mu (  \Pi ( V_\alpha, V_\beta)  ) \\
= & \ V_\mu ( |\nabla u |^{-1} ) \, ( \nabla^2 u )_{\alpha \beta}
+ | \nabla u |^{-1} \, V_\mu \left( ( \nabla^2 u )_{\alpha \beta} \right) . 
\end{split}
\eee
By \eqref{eq-nablau-3} and \eqref{eq-hessian-u-v-w},
$$  V_\mu ( |\nabla u |^{-1} ) \, ( \nabla^2 u )_{\alpha \beta} =  O ( | x|^{-2 - \tau} ) . $$
By \eqref{eq-hessian-u-v-w} and \eqref{eq-V-1},
\bee
 | \nabla u |^{-1} \,  V_\mu \left( ( \nabla^2 u )_{\alpha \beta} \right) = | \nabla u |^{-1}  \, O ( |x|^{-4-\tau} ) 
 = O ( |x|^{-2 - \tau} ) . 
\eee
Thus, $ ( D_{V_\mu} {\Pi}) (V_\alpha, V_\beta)  = O ( | x|^{-2-\tau} )$. 
This proves \eqref{eq-derivative-Pi}. 
\end{proof}

Let $ \m_{_H} ( \Sigma_t) $ denote the Hawking mass \cite{Hawking68} of $\Sigma_t$ 
if $t$ is a regular value of $u$. That is
\be \label{eq-H-mass}
\m_{_H} ( \Sigma_t) = \frac{r_t}{2} \left( 1 - \frac{1}{16 \pi} \int_{\Sigma_t} H^2 \right).
\ee
Here $r_t = \sqrt{ \frac{ | \Sigma_t |}{4\pi} } $ is the area radius of $\Sigma_t$.
By Lemma \ref{lem-area}, 
\be \label{eq-r-and-t}
 r_t =  \c_{_\Sigma} ( 1 - t )^{-1} + O ( (1 - t)^{ \tau - 1 } ).
\ee

\begin{prop} \label{prop-limit-1}
If  $\lim_{ t \to 1} \m_{_H} ( \Sigma_t) = \m $, where  $ \m $ is the mass of $(M, g)$, then
\be \label{eq-prop-limit-1}
\lim_{t \to 1} \frac{1}{ 1 - t }   \left[   8 \pi -  \frac{1}{ 1 - t} \int_{\Sigma_{t}} H | \nabla u |    \right] 
= 12 \pi \, \m \, \c_{_\Sigma}^{-1} 
\ee
and
\be \label{eq-prop-limit-2}
\lim_{t \to 1} \frac{1}{ 1 - t }   \left[   4 \pi -   \frac{1}{ (1 - t)^2 } \int_{\Sigma_{t}} | \nabla u |^2  \right] 
= 4 \pi \, \m \, \c_{_\Sigma}^{-1} .
\ee
\end{prop}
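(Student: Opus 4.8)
\medskip
\noindent\emph{Sketch of the approach.}
The plan is to reduce both \eqref{eq-prop-limit-1} and \eqref{eq-prop-limit-2} to the single second-order expansion of the area radius
\be \label{eq-key-radius}
r_t = \frac{\c_{_\Sigma}}{1-t} + \frac{\m}{2} + o(1) \qquad (t \to 1),
\ee
and then to extract \eqref{eq-key-radius} from the hypothesis $\m_{_H}(\Sigma_t) \to \m$.

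The starting point is the exact identity
\be \label{eq-dt-nablau-squared}
\frac{d}{dt} \int_{\Sigma_t} | \nabla u |^2 = - \int_{\Sigma_t} H | \nabla u | , \qquad \text{hence} \qquad  \int_{\Sigma_t} | \nabla u |^2 = \int_t^1 \Big( \int_{\Sigma_s} H | \nabla u | \Big) \, ds ,
\ee
which follows from the first variation (coarea) formula together with $n(|\nabla u|) = - H |\nabla u|$ for $n = |\nabla u|^{-1} \nabla u$ (a consequence of $\Delta u = 0$), and from $\int_{\Sigma_t} |\nabla u|^2 \to 0$ as $t \to 1$. Next, using Lemmas \ref{lem-area}, \ref{lem-limit-1}, \ref{lem-ringPi} and the exact flux identity $\int_{\Sigma_t} | \nabla u | = 4 \pi \c_{_\Sigma}$ (valid on every regular level set since $u$ is harmonic), I would prove the two refined expansions
\be \label{eq-refined-nablau-sq}
\int_{\Sigma_t} | \nabla u |^2 = \frac{4 \pi \c_{_\Sigma}^2}{r_t^2} + o ( (1-t)^3 ) , \qquad
\int_{\Sigma_t} H | \nabla u | = \frac{8 \pi \c_{_\Sigma}}{r_t} \Big( 1 - \frac{2 \m_{_H}(\Sigma_t)}{r_t} \Big)^{1/2} + o( (1-t)^2 ) .
\ee
For the first, write $|\nabla u| = 4\pi \c_{_\Sigma} |\Sigma_t|^{-1} + \rho_t$ with $\int_{\Sigma_t} \rho_t = 0$ and $\rho_t = O((1-t)^{2+\tau})$ pointwise, so that $\int_{\Sigma_t} \rho_t^2 = O((1-t)^{2 + 2\tau}) = o((1-t)^3)$ (using $\tau > \tfrac12$) and $|\Sigma_t| = 4\pi r_t^2$. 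For the second, decompose $H$ about its average, use $|\mathring{\Pi}| = O(|x|^{-1-\tau})$ and Cauchy--Schwarz to replace $\int_{\Sigma_t} H$ by $( |\Sigma_t| \int_{\Sigma_t} H^2 )^{1/2}$ up to $o((1-t)^2)$ once multiplied by the average $O((1-t)^2)$ of $|\nabla u|$, and rewrite $\int_{\Sigma_t} H^2$ via the Hawking mass formula \eqref{eq-H-mass}; here too $\tau > \tfrac12$ is what makes all errors $o((1-t)^2)$.

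Combining \eqref{eq-dt-nablau-squared} with \eqref{eq-refined-nablau-sq} and writing $r_t = \c_{_\Sigma}(1-t)^{-1}(1 + \eta(t))$ with $\eta(t) = O((1-t)^\tau) \to 0$ (Lemma \ref{lem-area}), an expansion to second order in $1-t$ — using $\m_{_H}(\Sigma_t) \to \m$ in the square root — reduces everything to the linear Volterra relation
\be \label{eq-volterra-eta}
(1-t)^2 \eta(t) = \int_t^1 (1-s)\, \eta(s) \, ds + \frac{\m}{3 \c_{_\Sigma}} (1-t)^3 + o((1-t)^3) .
\ee
With $G(t) := \int_t^1 (1-s) \eta(s) \, ds$ and $\sigma = 1-t$, \eqref{eq-volterra-eta} becomes $\frac{d}{d\sigma}(\sigma^{-1} G) = \frac{\m}{3\c_{_\Sigma}}\sigma + o(\sigma)$; integrating from $\sigma = 0$ (where $\sigma^{-1} G \to 0$) gives $G = \frac{\m}{6\c_{_\Sigma}}(1-t)^3 + o((1-t)^3)$, and feeding this back into \eqref{eq-volterra-eta} yields $\eta(t) = \frac{\m}{2\c_{_\Sigma}}(1-t) + o(1-t)$, i.e. \eqref{eq-key-radius}. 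Finally, substituting \eqref{eq-key-radius} into the first identity in \eqref{eq-refined-nablau-sq} and Taylor expanding gives \eqref{eq-prop-limit-2}, while substituting \eqref{eq-key-radius}, $\m_{_H}(\Sigma_t) \to \m$, and $(1 - 2\m_{_H}(\Sigma_t)/r_t)^{1/2} = 1 - \m\c_{_\Sigma}^{-1}(1-t) + o(1-t)$ into the second identity gives \eqref{eq-prop-limit-1}. (Alternatively, \eqref{eq-prop-limit-1} follows from \eqref{eq-prop-limit-2} via the exact relation between the two bracketed quantities implied by \eqref{eq-dt-nablau-squared}, once the $t$-derivative of the quantity in \eqref{eq-prop-limit-2} is controlled.)

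I expect the main obstacle to be the combination of \eqref{eq-refined-nablau-sq}--\eqref{eq-volterra-eta}: one must push the error terms all the way down to $o((1-t)^3)$ and $o((1-t)^2)$ and then isolate the \emph{constant} term $\m/2$ in \eqref{eq-key-radius}. This is precisely where the hypothesis $\m_{_H}(\Sigma_t)\to\m$ is essential, since for $\tau$ close to $\tfrac12$ Lemma \ref{lem-area} by itself only gives $r_t = \c_{_\Sigma}(1-t)^{-1} + O((1-t)^{\tau-1})$, which does not pin $r_t$ down even to order $1$.
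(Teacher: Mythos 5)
Your proposal is correct, and it takes a genuinely different route from the paper's. The paper differentiates $\int_{\Sigma_t} H|\nabla u|$ in $t$, which brings in $\Ric(\nu,\nu)$, the Gauss equation, Gauss--Bonnet and the scalar curvature; it then packages the result as a linear ODE $-A'(t) = -\tfrac{A(t)}{1-t} + \tfrac{24\pi \m_{_H}(\Sigma_t)}{(1-t)r_t} - \tfrac{E(t)}{1-t}$, integrates using $A(t)\to 0$, and needs the integrability of $R$ to kill the term $\int_t^1\int_{\Sigma_s}|R|$; the second limit is then obtained from the first via the exact relation $-B' = \tfrac{1}{1-t}(A - 2B)$. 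You instead never differentiate $\int_{\Sigma_t} H|\nabla u|$: the only derivative identity you use is the kinematic one $(\int_{\Sigma_t}|\nabla u|^2)' = -\int_{\Sigma_t} H|\nabla u|$, and the Hawking mass enters purely algebraically through its definition after a variance/Cauchy--Schwarz step, so no curvature quantity (and no integrability of $R$) appears anywhere. I checked the key estimates: with $\rho_t = |\nabla u| - 4\pi\c_{_\Sigma}|\Sigma_t|^{-1} = O((1-t)^{2+\tau})$ and $H - \bar H = O((1-t)^{1+\tau})$ one indeed gets errors $O((1-t)^{2+2\tau}) = o((1-t)^3)$ and $O((1-t)^{1+2\tau}) = o((1-t)^2)$ exactly when $\tau > \tfrac12$, the Volterra equation for $\eta$ follows, and the integrating-factor step is legitimate since $G = O((1-t)^{2+\tau})$ forces $\sigma^{-1}G \to 0$. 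What your route buys is the refined expansion $r_t = \c_{_\Sigma}(1-t)^{-1} + \tfrac{\m}{2} + o(1)$, a clean intermediate statement of independent interest that directly exhibits the mass in the level-set geometry; what the paper's route buys is economy, since the same derivative formula \eqref{eq-Hgrad-prime} is reused for the monotonicity results of Section \ref{sec-ineq}. (Minor remark: the appeal to $|\mathring{\Pi}|$ in your second refined expansion is not needed --- the pointwise estimate \eqref{eq-H-and-t} on $H$ already controls the variance of $H$ --- but this does not affect the argument.)
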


\begin{proof}
For regular values $t$, define 
\be \label{eq-def-At}
A(t) = 8 \pi -   \frac{1}{ 1 - t} \int_{\Sigma_{t}} H | \nabla u |  .
\ee
Then
\bee
 - A'(t) = \frac{1}{ 1 - t  }  \left[ - A(t) + 8 \pi  +  \left( \int_{\Sigma_{t}} H | \nabla u |  \right)'  \, \right] .
\eee
Direct calculation gives
\be \label{eq-Hgrad-prime}
\begin{split}
\left( \int_{\Sigma_t} H | \nabla u | \right)'  
= & \ \int_{\Sigma_t} H' | \nabla u | + H | \nabla u |' + H | \nabla u | H | \nabla u |^{-1}  \\
= & \ \int_{\Sigma_t} - | \nabla u |^{-2} | \nabla_{_{\Sigma_t}} | \nabla u | |^2 + K - \frac34 H^2 - \frac12 | \mathring{\Pi} |^2 - \frac12 R ,
\end{split}
\ee
where $ | \nabla u |' = - H $ and $ H' = - \Delta_{\Sigma_t} | \nabla u |^{-1} - ( \Ric (\nu, \nu) + | \Pi |^2 ) | \nabla u |^{-1}$.

By \eqref{eq-H-mass} and the Gauss-Bonnet theorem, 
\be
8 \pi  +  \left( \int_{\Sigma_{t}} H | \nabla u |  \right)' = 
\frac{24 \pi \m_{_H} (\Sigma_t) }{r_t} - E (t) ,
\ee
where 
$$ E (t) = \int_{\Sigma_t}  | \nabla u |^{-2} | \nabla_{_\Sigma} | \nabla u | |^2 +  \frac{1}{2} | \mathring{\Pi} |^2 + \frac12 R . $$
Therefore,
\be \label{eq-A-ode}
 - A' (t)  =  - \frac{ A(t) }{ 1 - t } + \frac{ 24 \pi \m_{_H} ( \Sigma_t) }{ (1-t) r_t} -  \frac{1}{1 - t} E(t) .
\ee
By Lemma \ref{lem-limit-1}, $ \lim_{t \to 1} A (t) = 0 $. Hence, 
\be \label{eq-A-formula}
A(t) = \frac{1}{1-t}  \int_t^1  \lf[ \frac{24\pi \m_{_H} (s ) }{ r_s} -  E(s) \ri]  .
\ee

As $ t \to 1$, $ \m_{_H} ( \Sigma_t) = \m + o (1)$  by the assumption. Thus, by \eqref{eq-r-and-t},
\be \label{eq-Hawking-to-area}
\frac{\m_{_H} ( \Sigma_t ) }{ r_t} =   \m \, \c^{-1}_{_\Sigma} (1 - t) +  (1 - t ) o (1)  .
\ee
Consequently, 
\be \label{eq-int-mass-r}
\int_t^1 \frac{\m_{_H} ( \Sigma_s ) }{ r_s} = \frac12 \m \, \c_{_\Sigma}^{-1} ( 1 - t)^2 + o ( (1-t)^2) .
\ee

To estimate $ \int_t^1 E(s)$, we note Lemmas \ref{lem-nablau-tangent} and \ref{lem-ringPi}, 
combined with \eqref{eq-x-and-t}, show
\bee
 | \nabla u |^{-2} | \nabla_{_\Sigma} | \nabla u | |^2 +  \frac{1}{2} | \mathring{\Pi} |^2 
 = O ( | x |^{-2 - 2 \tau} ) = O ( ( 1 - t )^{2 + 2 \tau} ) .
\eee
Thus, by Lemma \ref{lem-area}, 
\be \label{eq-Et}
\int_{\Sigma_t} | \nabla u |^{-2} | \nabla_{_\Sigma} | \nabla u | |^2 +  \frac{1}{2} | \mathring{\Pi} |^2
= O ( ( 1 - t )^{ 2 \tau} ) .
\ee
Therefore, 
\be \label{eq-int-error-term}
  \int_{t}^1 \int_{\Sigma_s}  | \nabla u |^{-2} | \nabla_{_\Sigma} | \nabla u | |^2 +  \frac{1}{2} | \mathring{\Pi} |^2   = O ( ( 1 - t )^{1 + 2 \tau} ).
\ee
To handle the scalar curvature term, we use the assumption $R$ is integrable. 
As $ t \to 1$,  
$$ 
o (1) =  \int_{ u \ge t} | R | = \int_t^1 \int_{\Sigma_s} | R | | \nabla u |^{-1}  .
$$ 
By \eqref{eq-nabu-and-t}, 
$ | \nabla u |^{-1} \ge \frac12 \c_{_\Sigma} ( 1 - t )^{-2} $ for $t$ close to $1$.
Hence,
\bee
\begin{split}
\int_t^1 \int_{\Sigma_s} | R | | \nabla u |^{-1} 
\ge & \ \frac12 \c_{_\Sigma} ( 1 - t )^{-2}  \int_t^1 \int_{\Sigma_s} | R | .
\end{split}
\eee
These imply
\be \label{eq-int-scalar}
 \int_t^1 \int_{\Sigma_s} | R | = o ( ( 1 - t )^2 ) . 
\ee

It follows from \eqref{eq-A-formula}, \eqref{eq-int-mass-r},  \eqref{eq-int-error-term}
and \eqref{eq-int-scalar} that
\be \label{eq-expression-At}
\frac{1}{1-t} A(t) = 12 \pi \, \m \, \c_{_\Sigma}^{-1} + o (1) + O \left( (1-t)^{2 \tau - 1 } \right) . 
\ee
Since $ \tau > \frac12 $, this proves \eqref{eq-prop-limit-1}. 

Similarly, define 
\be \label{eq-def-Bt}
B (t) = 4 \pi -   \frac{1}{ (1 - t )^2 } \int_{\Sigma_{t}}  | \nabla u |^2  .
\ee
At any regular value $t$, 
\be \label{eq-B-prime}
\begin{split}
 - B'(t) = & \ \frac{1}{ 1 - t  }  \left[ 2 ( - B(t) + 4 \pi )  + \frac{1}{1-t}  \left( - \int_{\Sigma_t} H | \nabla u |  \right)  \, \right] \\
= & \ \frac{1}{ 1 - t }  \left[ - 2 B(t) + A (t)  \,  \right] .
\end{split}
\ee
By Lemma \ref{lem-limit-1}, $\lim_{t \to 1} B(t) =0 $. Thus, 
\bee
B (t) = \frac{1}{ (1 - t)^2}  \int_t^1 ( 1 - s ) A(s) . 
\eee
Therefore, as $ t \to 1$, by \eqref{eq-expression-At}, 
\bee
\begin{split}
\frac{1}{1-t} B(t)
= & \ 4 \pi \, \m \, \c_{_\Sigma}^{-1} + o ( 1 ) + O \left( (1 - t)^{2 \tau - 1 } \right) .
\end{split}
\eee
This proves \eqref{eq-prop-limit-2}. 
\end{proof}

\begin{thm} \label{thm-limits}
Let $(M, g)$ be an asymptotically flat $3$-manifold with boundary $\Sigma$,
with $ \p\p\p g_{ij} = O ( |x|^{-3-\tau} )$ at $ \infty$.
Let $ u $ be the harmonic function that tends to $1$ at $\infty$ and vanishes at $ \Sigma$.
Then
\vspace{.2cm}
\begin{enumerate}
\item[(i)]
$ \displaystyle 
 \lim_{t \to 1} \frac{1}{ 1 - t }   \left[   8 \pi - \frac{1}{ 1 - t} \int_{\Sigma_{t}} H | \nabla u | \right] 
= 12 \pi \, \m \, \c_{_\Sigma}^{-1} ;$ 
\vspace{.2cm}
\item[(ii)]
$ \displaystyle \lim_{t \to 1} \frac{1}{ 1 - t }   \left[   4 \pi -  \frac{1}{ (1 - t)^2 } \int_{\Sigma_{t}} | \nabla u |^2 \right] 
= 4 \pi \, \m \, \c_{_\Sigma}^{-1} .
$
\vspace{.2cm}
\end{enumerate}
Here $\m $ is the mass of $(M,g)$ and $ \c_{_\Sigma}$ is the capacity of $\Sigma$ in $(M,g)$.
\end{thm}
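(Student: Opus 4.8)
The plan is to reduce Theorem \ref{thm-limits} to Proposition \ref{prop-limit-1}, whose conclusions \eqref{eq-prop-limit-1} and \eqref{eq-prop-limit-2} are precisely statements (i) and (ii). By that proposition, it suffices to verify the single hypothesis $\lim_{t\to 1}\m_{_H}(\Sigma_t)=\m$; everything else in the argument (the ODEs for $A(t)$, $B(t)$, the estimates on the error term $E(t)$, the integrability of $R$) is already in place. So the whole proof comes down to showing that the Hawking mass of the level sets $\Sigma_t$ converges to the ADM mass of $(M,g)$ as $t\to 1$.

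To prove $\m_{_H}(\Sigma_t)\to\m$, I would compare $\Sigma_t$ with the coordinate spheres $S_r=\{|x|=r\}$, for which the corresponding statement $\m_{_H}(S_r)\to\m$ is the classical fact that the ADM mass can be computed as a limit of Hawking masses over large coordinate spheres (this uses only the asymptotic flatness \eqref{eq-AF} and integrability of $R$; see e.g.\ \cite{Bartnik86}). The geometric input relating the two families is already developed in Lemmas \ref{lem-area}--\ref{lem-derivative-Pi}: from \eqref{eq-x-and-t} the level set $\Sigma_t$ is squeezed between $S_{r_-(t)}$ and $S_{r_+(t)}$ with $r_\pm(t)=\c_{_\Sigma}(1-t)^{-1}+O((1-t)^{\tau-1})$, so both area radii $r_{r_\pm}$ and $r_t$ agree to leading order by \eqref{eq-area-coordinate-sphere} and Lemma \ref{lem-area}. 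For the Willmore term one writes $\int_{\Sigma_t}H^2 = 16\pi + (\text{correction})$ using \eqref{eq-H-and-t} and Lemma \ref{lem-area}; the cleaner route is to note $\m_{_H}(\Sigma_t)=\tfrac{r_t}{2}(1-\tfrac1{16\pi}\int_{\Sigma_t}H^2)$ and expand $\int_{\Sigma_t}H^2$ directly from $H=2\c_{_\Sigma}^{-1}(1-t)+O((1-t)^{1+\tau})$ together with $|\Sigma_t|=4\pi\c_{_\Sigma}^2(1-t)^{-2}+O((1-t)^{\tau-2})$, giving $\m_{_H}(\Sigma_t)=O((1-t)^{\tau-1})\cdot O(1)$ — which is not sharp enough. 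So instead I would invoke the known monotonicity/continuity of the Hawking mass under the foliation: since for $t$ close to $1$ the exterior of $\Sigma_t$ is foliated by the mean-convex, nearly-umbilic surfaces $\{\Sigma_s\}_{s>t}$ (noted right before Lemma \ref{lem-area}, with $|\mathring{\Pi}|$ controlled by Lemma \ref{lem-ringPi}), one can run a Geroch-type computation, or simply compare with the coordinate-sphere limit through the sandwiching \eqref{eq-area-est-1}, to conclude $\lim_{t\to1}\m_{_H}(\Sigma_t)=\lim_{r\to\infty}\m_{_H}(S_r)=\m$.

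Concretely, the cleanest self-contained argument I would write: fix small $\delta>0$; for $t$ near $1$ the region between $\Sigma_t$ and $S_{r_-(t)}$ (resp.\ between $S_{r_+(t)}$ and $\Sigma_t$) is a thin shell of coordinate-width $O((1-t)^{\tau-1})$, so using the decay \eqref{eq-AF}, \eqref{eq-H-est}, \eqref{eq-Pi-est} and the Gauss--Bonnet theorem one estimates $|\m_{_H}(\Sigma_t)-\m_{_H}(S_{r_\pm(t)})|=o(1)$; combining with $\m_{_H}(S_r)\to\m$ gives the claim. Then apply Proposition \ref{prop-limit-1}. The hypothesis $\p\p\p g_{ij}=O(|x|^{-3-\tau})$ in the theorem statement is exactly what Lemma \ref{lem-derivative-Pi} needs, and that lemma's estimate \eqref{eq-derivative-Pi} — or rather the already-sufficient Lemmas \ref{lem-nablau-tangent}, \ref{lem-ringPi} — feeds the bound \eqref{eq-Et} on $E(t)$; so no new regularity is required beyond what is assumed.

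The main obstacle is the verification that $\m_{_H}(\Sigma_t)\to\m$: naively expanding the Willmore energy of $\Sigma_t$ loses a full power of $(1-t)^{-1}$ against the $O((1-t)^{\tau-2})$ error in the area, so one cannot simply substitute asymptotics into \eqref{eq-H-mass}. The resolution is to exploit that $\Sigma_t$ is $C^0$-close (indeed $C^2$-close in the appropriate rescaled sense, via Lemmas \ref{lem-ringPi}--\ref{lem-derivative-Pi}) to a coordinate sphere and that the Hawking mass is \emph{stable} under such perturbations in the asymptotic region — equivalently, to integrate the Geroch monotonicity formula along the foliation $\{\Sigma_s\}_{s\ge t}$ out to infinity, where the area-radius grows like $\c_{_\Sigma}(1-s)^{-1}$ and the error terms are the already-controlled quantities appearing in $E(s)$. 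Once this convergence is in hand, the theorem is immediate.
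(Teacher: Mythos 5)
Your reduction is exactly the paper's: Theorem \ref{thm-limits} follows from Proposition \ref{prop-limit-1} once one knows $\lim_{t\to 1}\m_{_H}(\Sigma_t)=\m$, and you correctly isolate this as the only remaining step. You also correctly diagnose why it is delicate: the Willmore deficit $1-\frac{1}{16\pi}\int_{\Sigma_t}H^2$ must be captured to accuracy $o(1-t)$, while the pointwise asymptotics \eqref{eq-H-and-t} together with Lemma \ref{lem-area} only give $O((1-t)^\tau)$, which after multiplication by $r_t\sim \c_{_\Sigma}(1-t)^{-1}$ does not even show $\m_{_H}(\Sigma_t)$ stays bounded when $\tau<1$.

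The gap is that none of your three proposed resolutions actually closes this. (a) The sandwiching $S_{r_-(t)}\subset\Sigma_t\subset S_{r_+(t)}$ controls areas (both families are area outer-minimizing) but says nothing about $\int H^2$; the shell between $\Sigma_t$ and $S_{r_\pm(t)}$ has coordinate width $O((1-t)^{\tau-1})=O(r^{1-\tau})$, which grows with $r$, and estimating the difference of the two Hawking masses across it to accuracy $o(1)$ is exactly the original problem --- Gauss--Bonnet does not produce it. (b) The Hawking mass of large surfaces is notoriously \emph{not} stable under perturbations that are merely small in a rescaled $C^0$ or $C^1$ sense; one needs quantitative control on $\mathring\Pi$ \emph{and} its tangential derivative. (c) Geroch monotonicity requires inverse mean curvature flow and nonnegative scalar curvature, neither of which is available here: $\{\Sigma_t\}$ is a harmonic level-set foliation and Theorem \ref{thm-limits} carries no curvature hypothesis. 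The paper closes the gap differently: it verifies --- using Lemma \ref{lem-area} for the area, Lemma \ref{lem-ringPi} for $K\ge Cr_-^{-2}$ (hence a diameter bound) and $|\mathring\Pi|\le Cr_-^{-1-\tau}$, and crucially Lemma \ref{lem-derivative-Pi} for $|D\mathring\Pi|\le Cr_-^{-2-\tau}$ --- that $\{\Sigma_t\}$ is a family of nearly round surfaces in the sense of Definition 1.3 of \cite{SWW08}, and then quotes Theorem 2 of \cite{SWW08} to conclude $\m_{_H}(\Sigma_t)\to\m$. This is also why the hypothesis $\p\p\p g_{ij}=O(|x|^{-3-\tau})$ is genuinely needed: your remark that Lemmas \ref{lem-nablau-tangent} and \ref{lem-ringPi} are already sufficient is true for the error term $E(t)$ inside Proposition \ref{prop-limit-1}, but not for the Hawking mass limit itself, which is where Lemma \ref{lem-derivative-Pi} enters. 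If you want a self-contained argument, you must either reprove the nearly-round-surface theorem or supply an equally quantitative replacement; the appeal to ``stability'' as stated would fail.
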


\begin{proof}
It suffices to show $ \lim_{t \to 1} \m_{_H} (\Sigma_t) = \m $.
For $t$ close to $ 1$, let $ r_- (t)= \min_{\Sigma_t} | x| $ and $ r_+ (t) = \max_{\Sigma_t} |x| $.  
By \eqref{eq-x-and-t}, $ r_+ (t) \le C r_- (t) $. Here and below, $C > 0$ denotes some constant independent on $t$.
By Lemma \ref{lem-area}, $ | \Sigma_t | \le C r_-^2 $.
By Lemma \ref{lem-ringPi}, $ K \ge C r_-^{-2}$, hence $ \text{diam} (\Sigma_t) \le C r_-$.
By Lemma \ref{lem-ringPi} and Lemma \ref{lem-derivative-Pi}, $ | \mathring{\Pi} | \le C  r_-^{-1-\tau} $
and $ | D \mathring{\Pi} | \le C r_-^{-2-\tau}  $. 
Hence, $\{ \Sigma_t \}$ is a family of nearly round surfaces near $\infty$ in $(M,g)$ according to 
Definition 1.3 in \cite{SWW08}. 
By Theorem 2 in \cite{SWW08}, $ \lim_{t \to 1} \m_{_H} (\Sigma_t) = \m $.

Theorem \ref{thm-limits} now follows from Proposition \ref{prop-limit-1}.
\end{proof}

We can indeed interpret the mass-to-capacity ratio 
as the derivatives at $\infty$ of the two functions
\be \label{eq-def-A-and-B}
A(t) = 8 \pi -   \frac{1}{ 1 - t} \int_{\Sigma_{t}} H | \nabla u |  \ \ 
\text{and} \ \ 
B (t) = 4 \pi -   \frac{1}{ (1 - t )^2 } \int_{\Sigma_{t}}  | \nabla u |^2  .
\ee

\begin{cor}
Let $(M, g)$ be an asymptotically flat $3$-manifold with boundary $\Sigma$,
with $ \p\p\p g_{ij} = O ( |x|^{-3-\tau} )$ at $ \infty$.
Let $ u $ be the harmonic function that tends to $1$ at $\infty$ and vanishes at $ \Sigma$.
Then the functions $ A (t)$ and $ B (t)$ have $C^1$ extensions to $ t = 1 $ with 
$$ A (1) = 0 ,  \   A' (1) =  - 12 \pi \, \m \, \c_{_\Sigma}^{-1} , \ 
B (1) = 0, \  B'(1) = - 4 \pi \, \m \, \c_{_\Sigma}^{-1} . $$
\end{cor}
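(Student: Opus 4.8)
The plan is to read the assertion off from Lemma~\ref{lem-limit-1} and Theorem~\ref{thm-limits}, together with the ODEs satisfied by $A$ and $B$. For $t_0$ close to $1$ every value of $u$ is regular, so $A$ and $B$ are smooth on $[t_0,1)$. By Lemma~\ref{lem-limit-1}, $\lim_{t\to1}A(t)=\lim_{t\to1}B(t)=0$, so $A$ and $B$ extend continuously to $[t_0,1]$ with $A(1)=B(1)=0$. Moreover, since
\[
\frac{A(t)-A(1)}{t-1}=-\frac{A(t)}{1-t}=-\frac{1}{1-t}\Big(8\pi-\frac{1}{1-t}\int_{\Sigma_t}H|\nabla u|\Big),
\]
Theorem~\ref{thm-limits}(i) shows this quotient tends to $-12\pi\,\m\,\c_{_\Sigma}^{-1}$, so the extended $A$ is differentiable at $t=1$ with $A'(1)=-12\pi\,\m\,\c_{_\Sigma}^{-1}$; the same computation with Theorem~\ref{thm-limits}(ii) gives $B'(1)=-4\pi\,\m\,\c_{_\Sigma}^{-1}$.

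It then remains to check that $A'$ and $B'$ extend continuously to $t=1$ with exactly these limiting values. For $B$ this is immediate from \eqref{eq-B-prime}: $B'(t)=\frac{1}{1-t}\big(2B(t)-A(t)\big)=2\,\frac{B(t)}{1-t}-\frac{A(t)}{1-t}$, so Theorem~\ref{thm-limits} forces $\lim_{t\to1}B'(t)=2(4\pi\,\m\,\c_{_\Sigma}^{-1})-12\pi\,\m\,\c_{_\Sigma}^{-1}=-4\pi\,\m\,\c_{_\Sigma}^{-1}=B'(1)$. For $A$ I would use the ODE \eqref{eq-A-ode},
\[
A'(t)=\frac{A(t)}{1-t}-\frac{24\pi\,\m_{_H}(\Sigma_t)}{(1-t)\,r_t}+\frac{E(t)}{1-t},
\]
where $E(t)=\int_{\Sigma_t}\big(|\nabla u|^{-2}|\nabla_{_{\Sigma_t}}|\nabla u||^2+\frac12|\mathring{\Pi}|^2+\frac12 R\big)$. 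By the proof of Theorem~\ref{thm-limits} one has $\m_{_H}(\Sigma_t)\to\m$, so with \eqref{eq-r-and-t} and \eqref{eq-Hawking-to-area} the second term tends to $24\pi\,\m\,\c_{_\Sigma}^{-1}$, while the first tends to $12\pi\,\m\,\c_{_\Sigma}^{-1}$ by Theorem~\ref{thm-limits}(i); and by Lemmas~\ref{lem-nablau-tangent} and \ref{lem-ringPi}, through \eqref{eq-Et}, the contribution of $|\nabla u|^{-2}|\nabla_{_{\Sigma_t}}|\nabla u||^2+\frac12|\mathring{\Pi}|^2$ to $E(t)/(1-t)$ is $O((1-t)^{2\tau-1})\to0$ since $\tau>\frac12$. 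Thus everything reduces to showing $\frac{1}{1-t}\int_{\Sigma_t}R\to0$.

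This last limit is where I expect the real work to be. The pointwise decay $R=O(|x|^{-\tau-2})$, with $|\Sigma_t|=O((1-t)^{-2})$ and $|x|\sim\c_{_\Sigma}(1-t)^{-1}$ on $\Sigma_t$, gives $\int_{\Sigma_t}R=O((1-t)^{\tau})$, hence $\frac{1}{1-t}\int_{\Sigma_t}R=O((1-t)^{\tau-1})$, which already settles the case $\tau\ge1$ but is inconclusive for $\tau\in(\frac12,1)$. For that range I would combine it with the integrability of $R$ — recall $\int_{\{u\ge t\}}|R|=\int_t^1\!\int_{\Sigma_s}|R|\,|\nabla u|^{-1}\,ds\to0$ — by averaging $\int_{\Sigma_s}|R|$ over an $s$-interval of length comparable to $1-t$, using $\partial\partial\partial g_{ij}=O(|x|^{-3-\tau})$ (hence $\partial R=O(|x|^{-\tau-3})$) to bound the variation of $\int_{\Sigma_s}|R|$ across that interval; this averaging is the step I anticipate being delicate. (If one reads ``$C^1$ extension to $t=1$'' as asserting only that the continuous extension is differentiable at $t=1$ with the stated derivative, the first paragraph already suffices.)
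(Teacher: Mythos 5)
Your argument is essentially the paper's own: the paper likewise gets $A(1)=B(1)=0$ from Lemma \ref{lem-limit-1}, computes $\lim_{t\to 1}A'(t)$ from the ODE \eqref{eq-A-ode} together with \eqref{eq-Hawking-to-area} and \eqref{eq-Et}, computes $\lim_{t\to 1}B'(t)$ from \eqref{eq-B-prime}, and observes that these limits agree with the limits of the difference quotients $A(t)/(t-1)$ and $B(t)/(t-1)$ supplied by Theorem \ref{thm-limits}. Your first two paragraphs reproduce this correctly.

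The issue you isolate in the last paragraph --- that continuity of $A'$ at $t=1$ requires $\frac{1}{1-t}\int_{\Sigma_t}R\to 0$, which the pointwise decay $R=O(|x|^{-2-\tau})$ alone does not give for $\tau\in(\frac12,1)$ --- is a genuine subtlety, but it is one the paper itself passes over rather than one you have failed to resolve: the paper justifies $\frac{1}{1-t}E(t)\to 0$ by citing only \eqref{eq-Et}, which bounds $\int_{\Sigma_t}\bigl(|\nabla u|^{-2}|\nabla_{_{\Sigma_t}}|\nabla u||^2+\frac12|\mathring{\Pi}|^2\bigr)$ and says nothing about the $\frac12\int_{\Sigma_t}R$ part of $E(t)$, while the integrability of $R$ enters the paper only through the integrated estimate \eqref{eq-int-scalar}, which controls $\int_t^1\int_{\Sigma_s}|R|$ but not $\int_{\Sigma_t}|R|$ at a single $t$. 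So you have surfaced an elision in the source rather than introduced a new gap. Be aware, however, that the averaging repair you sketch does not obviously close it: the natural bound on $\frac{d}{ds}\int_{\Sigma_s}R$ coming from $\partial R=O(|x|^{-3-\tau})$ is $O((1-s)^{\tau-1})$, so the variation of $\int_{\Sigma_s}R$ over an interval of length $1-t$ is only $O((1-t)^{\tau})$, and dividing by $1-t$ returns you to the inconclusive $O((1-t)^{\tau-1})$. For the derivative values $A'(1)$ and $B'(1)$ themselves, understood as limits of difference quotients, your first paragraph is complete and unconditional, exactly as you note.
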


\begin{proof} 
By Lemma \ref{lem-limit-1}, $ A(t)$ and $B(t)$  extend continuously to $ t = 1$ 
with $ A(1) =0 $ and $ B(1)= 0$. 
By Theorem \ref{thm-limits} (i),   \eqref{eq-A-ode}, \eqref{eq-Hawking-to-area} and \eqref{eq-Et} ,
\bee
\begin{split}
\lim_{t \to 1} A'(t) = & \ \lim_{t \to 1}  \left[\frac{1}{1-t} A(t) - \frac{ 24 \pi \m_{_H} (\Sigma_t ) } { (1-t) r_t} 
+ \frac{1}{1-t} E(t) \right] \\
= & \ 12 \pi \, \m \, \c_{_\Sigma}^{-1} - 24 \pi \, \m \, \c_{_\Sigma}^{-1} \\
= & \  \lim_{t \to 1} \frac{1}{t-1} A(t) .
\end{split}
\eee
Similarly, by Theorem \ref{thm-limits} (i), (ii) and \eqref{eq-B-prime}, 
\bee
\lim_{t \to 1} B'(t) = \lim_{t \to 1} \frac{1}{1-t} \left[ 2 B(t) - A(t)  \right] = - 4\pi \, \m \, \c_{_\Sigma}^{-1} 
= \lim_{ t \to 1} \frac{1}{t-1} B(t) . 
\eee
This shows $ A'(t) $ and $B'(t)$ are continuous at $ t = 1 $ with $A'(1) = - 12 \pi \, \m \, \c_{_\Sigma}^{-1}$
and $ B'(1) = - 4\pi \, \m \, \c_{_\Sigma}^{-1}  $.
\end{proof}

\section{Inequalities along the level sets} \label{sec-ineq}

In this section, we establish a family of geometric inequalities 
along $\{ \Sigma_t \}$ under assumptions that $g$ has nonnegative scalar curvature and $M$ has 
simple topology.

We first compare  
\bee 
A(t) = 8 \pi -  \frac{1}{ 1 - t} \int_{\Sigma_{t}} H | \nabla u |   \ \ 
\text{and} \  \ B(t) = 4 \pi -  \frac{1}{ (1 - t)^2 } \int_{\Sigma_{t}} | \nabla u |^2  .
\eee

\begin{thm} \label{thm-sec-ineq}
Let $(M, g)$ be a complete, orientable, asymptotically flat $3$-manifold with boundary $\Sigma$.
Suppose $ \Sigma$ is connected and $H_2 (M, \Sigma) = 0$.
Let $ u $ be the harmonic function that tends to $1$ at $\infty$ and vanishes at $ \Sigma$.
If $ g$ has nonnegative scalar curvature, then
\be \label{eq-bdry-ineq-0}
4 \pi  +  \frac{1}{1-t} \int_{\Sigma_t} H | \nabla u |  \ge  \frac{3}{ (1-t)^2 } \int_{\Sigma_t} | \nabla u |^2 
\ee
for all regular values $t$, and equality holds at some $t$ if and only if  $(M, g)$, outside $\Sigma_t$, 
is isometric to $ \R^3$ minus a round ball.

In particular, at $ \Sigma$, 
\be \label{eq-bdry-ineq}
4 \pi + \int_\Sigma H | \nabla u |  \ge  3 \int_\Sigma | \nabla u |^2  ,
\ee
and equality holds if and only if  $(M, g)$ is isometric to $ \R^3$ minus a round ball.
\end{thm}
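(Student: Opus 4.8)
The plan is to reduce \eqref{eq-bdry-ineq-0} to a one-variable differential inequality along the foliation $\{\Sigma_t\}$. Set
\[
G(t):=4\pi+\frac{1}{1-t}\int_{\Sigma_t}H|\nabla u|-\frac{3}{(1-t)^2}\int_{\Sigma_t}|\nabla u|^2=3B(t)-A(t),
\]
in the notation of \eqref{eq-def-A-and-B}; then \eqref{eq-bdry-ineq-0} is exactly $G(t)\ge 0$, and \eqref{eq-bdry-ineq} is the case $t=0$. A one-line computation from \eqref{eq-B-prime} gives $\frac{d}{dt}\big(\frac{B(t)}{1-t}\big)=\frac{G(t)}{(1-t)^2}$, so the statement is equivalent to the monotonicity of the quantity $\mathcal B(t)$ of the introduction. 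I would prove it by showing that $(1-t)G(t)$ is non-increasing in $t$ and tends to $0$ as $t\to 1$.

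First I would record the first-variation identities. Since $\Sigma_t$ moves with normal speed $|\nabla u|^{-1}$, one has $\big(\int_{\Sigma_t}|\nabla u|^2\big)'=-\int_{\Sigma_t}H|\nabla u|$, which together with \eqref{eq-Hgrad-prime}, \eqref{eq-B-prime} and the identity $-A'(t)=\frac{1}{1-t}\big[8\pi-A(t)+(\int_{\Sigma_t}H|\nabla u|)'\big]$ from Section~\ref{sec-limits} yields
\[
G'(t)=\frac{1}{1-t}\Big[6B(t)-4A(t)+8\pi+\big(\textstyle\int_{\Sigma_t}H|\nabla u|\big)'\Big].
\]
Next I would use the topological hypotheses: because $\Sigma$ is connected and $H_2(M,\Sigma)=0$, each regular level set $\Sigma_t$ is a connected closed oriented surface, so Gauss--Bonnet gives $\int_{\Sigma_t}K=2\pi\chi(\Sigma_t)\le 4\pi$. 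Feeding this, $R\ge 0$, and $|\mathring\Pi|^2\ge 0$ into \eqref{eq-Hgrad-prime} and regrouping so as to isolate $G(t)$ itself,
\[
G'(t)\le\frac{1}{1-t}\Big[G(t)+\Big(\tfrac{3}{1-t}\!\int_{\Sigma_t}\!H|\nabla u|-\tfrac{3}{(1-t)^2}\!\int_{\Sigma_t}\!|\nabla u|^2-\tfrac34\!\int_{\Sigma_t}\!H^2\Big)-\int_{\Sigma_t}|\nabla u|^{-2}|\nabla_{\Sigma_t}|\nabla u||^2\Big],
\]
where the discarded terms $\big(4\pi-\int_{\Sigma_t}K\big)$, $\tfrac12\int_{\Sigma_t}|\mathring\Pi|^2$ and $\tfrac12\int_{\Sigma_t}R$ are all nonnegative.

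The crux is that the middle group is $\le 0$: by Cauchy--Schwarz $\int_{\Sigma_t}H|\nabla u|\le\big(\int_{\Sigma_t}H^2\big)^{1/2}\big(\int_{\Sigma_t}|\nabla u|^2\big)^{1/2}$, and then $2pq\le p^2+q^2$ applied with $p=\tfrac{\sqrt 3}{2}\big(\int_{\Sigma_t}H^2\big)^{1/2}$ and $q=\tfrac{\sqrt 3}{1-t}\big(\int_{\Sigma_t}|\nabla u|^2\big)^{1/2}$ gives precisely $\tfrac{3}{1-t}\int_{\Sigma_t}H|\nabla u|\le\tfrac34\int_{\Sigma_t}H^2+\tfrac{3}{(1-t)^2}\int_{\Sigma_t}|\nabla u|^2$. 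Hence $G'(t)\le\frac{G(t)}{1-t}$, i.e.\ $\frac{d}{dt}\big[(1-t)G(t)\big]\le 0$ at every regular value. By Lemma~\ref{lem-limit-1}, $A(t),B(t)\to 0$, so $(1-t)G(t)\to 0$ as $t\to 1$; being non-increasing (the measure-zero set of critical values being handled by the coarea/Morse-type regularization of the Appendix) with limit $0$, it is $\ge 0$ on $[0,1)$, which is \eqref{eq-bdry-ineq-0}; and $t=0$ is a regular value since $u>0$ off $\Sigma$ by the Hopf lemma, giving \eqref{eq-bdry-ineq}. For the rigidity statement, equality at a regular $t_\ast$ forces $(1-t)G\equiv 0$ on $[t_\ast,1)$, hence $\int_{\Sigma_t}K=4\pi$ (so $\Sigma_t\cong\mathbb S^2$), $R\equiv 0$, $\mathring\Pi\equiv 0$, $|\nabla u|$ constant on each $\Sigma_t$, and the Cauchy--Schwarz/AM--GM equalities give $H=\tfrac{2}{1-t}|\nabla u|$ on $\Sigma_t$; thus $\{u\ge t_\ast\}$ carries a rotationally symmetric metric $d\rho^2+r(\rho)^2 g_{\mathbb S^2}$, which by $R\equiv 0$ is a spatial Schwarzschild metric, and the relation $H=\tfrac{2}{1-t}|\nabla u|$ (equivalently $1-u=\c_{_\Sigma}\,r^{-1}$) forces the mass parameter to vanish, i.e.\ $(M,g)$ outside $\Sigma_{t_\ast}$ is $\R^3$ minus a round ball.

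The step I expect to be the main obstacle is closing the differential inequality: the constant $3$ in the theorem is exactly what makes the term $\tfrac34\int_{\Sigma_t}H^2$ produced by the evolution of the mean curvature in \eqref{eq-Hgrad-prime} absorbable by Cauchy--Schwarz together with the correctly weighted AM--GM, once the $8\pi$ from $-B'(t)$ and the $4\pi$ from Gauss--Bonnet are accounted for; arranging the bookkeeping so that only the harmless term $-\int_{\Sigma_t}|\nabla u|^{-2}|\nabla_{\Sigma_t}|\nabla u||^2$ survives is the delicate point. Secondary technical matters are the connectedness of the level sets and, in the rigidity case, using the Cauchy--Schwarz equality (rather than just $R=0$ and rotational symmetry) to exclude positive Schwarzschild mass.
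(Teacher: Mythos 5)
Your argument is correct and is essentially the paper's proof: both show that $\Psi(t)=(1-t)\bigl(3B(t)-A(t)\bigr)$ is non-increasing along the level sets (via \eqref{eq-Hgrad-prime}, Gauss--Bonnet with connected level sets, and absorption of the $\tfrac34\int_{\Sigma_t}H^2$ term) and tends to $0$ as $t\to1$, then read off the sign and the rigidity from the vanishing of the discarded terms. The only cosmetic difference is that the paper absorbs $\tfrac34\int_{\Sigma_t}H^2$ via the pointwise square $\tfrac34\bigl(H-\tfrac{2|\nabla u|}{1-u}\bigr)^2\ge 0$ inside an integrated (Stern-type) identity rather than your integrated Cauchy--Schwarz/AM--GM, which makes the passage across critical values and the equality analysis (recovering $H=\tfrac{2}{1-t}|\nabla u|$ pointwise) slightly more direct.
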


\begin{rem} 
Inequality \eqref{eq-bdry-ineq-0} is equivalent to 
\be \label{eq-A-and-3B}
A(t) \le 3 B(t) .
\ee
We will use Theorem \ref{thm-sec-ineq} in this form later to 
derive other inequalities along $ \{ \Sigma_t \}$. 
\end{rem}

To prove Theorem \ref{thm-sec-ineq}, we begin with a lemma which may be derived directly from the work of Stern in \cite{Stern19}.

\begin{lma} \label{lem-monotone-1}
Let $ (\Omega, g)$ be a compact, orientable, Riemannian $3$-manifold with nonnegative scalar curvature, 
with boundary $\p \Omega$.
Suppose $\p \Omega$ has two connected components $S_1$ and $ S_2 $.
Let $ u $ be a harmonic function on $(\Omega, g)$ such that $ u = c_i$ on $ S_i$, $ i = 1, 2$, 
where $ c_1$, $ c_2$ are constants with $ c_1 < c_2 < 1$.
If  the level set $\Sigma_s : = u^{-1} (s)$ is connected for $s \in [c_1, c_2]$, then
\bee
\Psi (t) : =  4 \pi ( 1 - t ) + \int_{\Sigma_t} H | \nabla u |   - \frac{3}{1-t} \int_{\Sigma_t} | \nabla u |^2 
\  \searrow  \ \text{as} \ t \nearrow ,
\eee
i.e. $\Psi (t)$ is monotone nonincreasing.
Here $ t \in [ c_1, c_2]$ denotes a regular value of $u$ and $H$ is the mean curvature of $ \Sigma_t $ with respect to 
the unit normal $  \nu =  | \nabla u |^{-1} \nabla u $.
\end{lma}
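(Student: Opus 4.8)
The plan is to differentiate $\Psi(t)$ in $t$, using the first-variation formulae along the level sets of $u$ together with the Bochner/Stern identity, and to show the resulting expression is $\le 0$. I would begin by recording the standard variational formulae for a regular level set $\Sigma_t$ moving with speed $|\nabla u|^{-1}$: namely $|\nabla u|' = -H$, $H' = -\Delta_{\Sigma_t}|\nabla u|^{-1} - (\Ric(\nu,\nu)+|\Pi|^2)|\nabla u|^{-1}$, and $\big(\int_{\Sigma_t}f\big)' = \int_{\Sigma_t}(f' + fH|\nabla u|^{-1})$. These are exactly the ingredients already used to obtain \eqref{eq-Hgrad-prime}, so I can reuse that computation to get $\big(\int_{\Sigma_t}H|\nabla u|\big)' = \int_{\Sigma_t}\big[-|\nabla u|^{-2}|\nabla_{_{\Sigma_t}}|\nabla u||^2 + K - \tfrac34H^2 - \tfrac12|\mathring\Pi|^2 - \tfrac12 R\big]$, and similarly $\big(\int_{\Sigma_t}|\nabla u|^2\big)' = -\int_{\Sigma_t}H|\nabla u|$, which in turn controls $\big(\tfrac{1}{1-t}\int_{\Sigma_t}|\nabla u|^2\big)'$.

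Next I would assemble $-\Psi'(t)$ and simplify. Using Gauss--Bonnet, $\int_{\Sigma_t}K = 4\pi$ since $\Sigma_t$ is connected (and, in the simple-topology setting of Theorem \ref{thm-sec-ineq}, a sphere); the topological hypotheses $\Sigma$ connected and $H_2(M,\Sigma)=0$ are what guarantee connectedness of the level sets, so the lemma's standing assumption that $\Sigma_s$ is connected is the relevant input here. After substituting and grouping terms, the combination $-4\pi + \int_{\Sigma_t}K$ cancels, and one is left with an integrand built from $H^2$, $|\mathring\Pi|^2$, $|\nabla_{_{\Sigma_t}}|\nabla u||^2/|\nabla u|^2$, $R$, and cross terms coming from the $\tfrac{1}{1-t}$ weight acting on $\int_{\Sigma_t}|\nabla u|^2$ and on $\int_{\Sigma_t}H|\nabla u|$. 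The goal is to show this integrand is pointwise $\le 0$ (up to the nonnegative contribution of $R$, which only helps). Here I expect to invoke the refined Kato/Stern inequality: for a harmonic function, $|\mathring\Pi|^2 \ge$ a multiple of $|\nabla_{_{\Sigma_t}}\log|\nabla u||^2$, or equivalently the decomposition of $|\nabla^2 u|^2$ that appears in Stern's work. The remaining algebraic inequality should reduce, after completing the square in $H$, to something manifestly nonpositive, with the borderline case pinning down $H = \tfrac{2}{1-t}|\nabla u|$, $\mathring\Pi = 0$, $\nabla_{_{\Sigma_t}}|\nabla u| = 0$, and $R = 0$.

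The main obstacle, and the step I would spend the most care on, is arranging the algebra so that the weight $\tfrac{3}{1-t}$ (rather than some other constant) is exactly the one that makes the quadratic form in $(H,|\nabla u|)$ degenerate in the right place — this is the reason the factor $3$ appears in \eqref{eq-bdry-ineq-0}, and it has to come out of completing the square, not be assumed. I would also need to handle regularity: $u$ may have critical points, so the monotonicity of $\Psi$ should first be established across intervals of regular values and then extended across critical levels by an approximation argument (perturbing $u$ or the level, integrating the a.e.\ inequality, using the coarea formula and Sard's theorem), which is presumably the content of the regularization appendix the introduction mentions. Finally, for the rigidity statement I would trace back the equality conditions: $R\equiv 0$, $\mathring\Pi\equiv 0$ on each level set (so the levels are round spheres), $|\nabla u|$ constant on each level set, and $H(1-t) = 2|\nabla u|$; integrating these forces the metric outside $\Sigma_t$ to be rotationally symmetric and scalar-flat, hence a flat metric on $\R^3$ minus a ball, which gives the stated rigidity and, passing $t$ to the boundary, \eqref{eq-bdry-ineq}.
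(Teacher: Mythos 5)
Your proposal is correct and follows essentially the same route as the paper: the derivative computation via \eqref{eq-Hgrad-prime}, the decomposition of $|\nabla^2 u|^2$ from Stern's identity, Gauss--Bonnet with $\chi(\Sigma_t)\le 2$ for connected orientable level sets, and completing the square to produce $-\tfrac34\bigl(H-\tfrac{2|\nabla u|}{1-u}\bigr)^2$ are exactly the ingredients of the paper's argument. The only presentational difference is that the paper works with the integrated inequality $\Psi(t_1)-\Psi(t_2)\ge\int_{t_1}^{t_2}\int_{\Sigma_t}(\cdots)$ directly (so that Stern's level-set inequality absorbs most of the critical-value issues, with one remaining identity handled by the regularization lemma in the Appendix), which is the same bookkeeping you flag when you note that the pointwise derivative argument must be supplemented by an approximation across critical levels.
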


\begin{proof}
Let $ t_1 < t_2 $ be two regular values of $ u$. 
On $ \Omega_{[t_1, t_2] } : = \{ x \in \Omega \ | \ t_1 \le u(x) \le t_2 \}$, one has
\be \label{eq-bdry-1}
\begin{split}
\int_{\Sigma_{t_1} } H | \nabla u |  -  \int_{\Sigma_{t_2} } H | \nabla u |   \ge  \ 
\int_{t_1}^{t_2}   \int_{\Sigma_t} \frac{1}{2}  \left(   \frac{ | \nabla^2 u |^2 }{  | \nabla u |^2  }  +  R   \right)
  -  2 \pi  \int_{t_1}^{t_2} \chi(\Sigma_t)  .
\end{split}
\ee
Here $ \nabla^2 u$, $\nabla u $ denote the Hessian, the gradient of $u$ on $(M, g)$, respectively,  
$R$ is the scalar curvature of $g$, and $\chi (\Sigma_t)$ is the Euler characteristic of $\Sigma_t$. 
Relation \eqref{eq-bdry-1} is a direct consequence of Stern's computations in Section 2 of \cite{Stern19},
and can also be found explicitly from  (4.7) in \cite{BKKS19} and (2.18) in \cite{HMT20}.

Let $ \Pi$ denote the second fundamental form of $ \Sigma_t$ w.r.t $\nu$. 
Along $ \Sigma_t$,  
\be \label{eq-lap-conse}
\nabla^2 u  (X, Y)  = | \nabla u | \Pi (X, Y), \  \nabla^2 u  (X, \nu) = X ( | \nabla u | ), \  
\nabla^2 u (\nu, \nu) = - H | \nabla u | ,
\ee
where $X, Y$ denote vectors tangent to $ \Sigma_t$ and the last equation follows from $ \Delta u = 0 $.
Thus,
\be \label{eq-hessian-square}
| \nabla u |^{-2} \, | \nabla^2 u |^2 =   | \Pi |^2 + 2 | \nabla u |^{-2} | \nabla_{_{\Sigma_t}} | \nabla u | |^2 
+ H^2    .
\ee
Here $\nabla_{_{\Sigma_t} } $ denotes the gradient on $\Sigma_t$.
Under the assumption $\Sigma_t$ is connected, 
it follows from \eqref{eq-bdry-1} and \eqref{eq-hessian-square} that
\be \label{eq-bdry-3}
\begin{split}
& \  4 \pi ( t_2 - t_1) + \int_{\Sigma_{t_1} } H | \nabla u |  -  \int_{\Sigma_{t_2} } H | \nabla u |   \\
 \ge  & \   \int_{t_1}^{t_2}   \int_{\Sigma_t}  \frac{1}{2}  | \mathring{\Pi} |^2 +  | \nabla u |^{-2}  | \nabla_{_{\Sigma_t} } | \nabla u | |^2 + \frac34 H^2  + \frac12 R  ,
\end{split}
\ee
where $ \mathring{\Pi}$ denotes the traceless part of $\Pi$.

To handle the term of $ H^2$ in \eqref{eq-bdry-3}, we follow the idea in \cite{MunteanuWang21, AMO21} 
to replace it with $ \left( H - 2  | \nabla u | ( 1 - u)^{-1} \right)^2$. 
A motivation to this may be seen in the model case in which $\Omega = \{ R_1 \le |x| \le R_2 \} \subset \R^3$ and 
$u = 1 - |x|^{-1}$. In this special setting, 
$H$ and $|\nabla u | $ satisfy  $ H =  2 | \nabla u | ( 1 - u)^{-1}$ along any level set sphere.

Thus, one can rewrite \eqref{eq-bdry-3} as 
\be \label{eq-bdry-4}
\begin{split}
& \ 4 \pi ( t_2 - t_1) + \int_{\Sigma_{t_1} } H | \nabla u |  -  \int_{\Sigma_{t_2} } H | \nabla u |  \\
& \ +  3   \int_{t_1}^{t_2} \left[  - \frac{1}{1 - t}  \int_{\Sigma_t}   H  | \nabla u | 
+  \frac{1}{ ( 1 - t)^2 } \int_{\Sigma_t}  | \nabla u |^2 \right] \\
 \ge  & \   \int_{t_1}^{t_2}  \int_{\Sigma_t} \frac{1}{2}  | \mathring{\Pi} |^2 +  | \nabla u |^{-2}  | \nabla_{_{\Sigma_t} } | \nabla u | |^2  +  \frac{3}{4}  \left( H -  \frac{ 2 | \nabla u | }{ 1 - u } \right)^2  + \frac12 R   .
\end{split}
\ee

At each regular value $t$, one has 
$  \left( \int_{\Sigma_t} | \nabla u |^2 \right)' = - \int_{\Sigma_t} H | \nabla u | $,
and therefore,
\bee
\begin{split}
 - \frac{1}{1 - t}  \int_{\Sigma_t}   H  | \nabla u | 
+  \frac{1}{ ( 1 - t)^2 } \int_{\Sigma_t}  | \nabla u |^2 
 = \frac{d}{dt} 
\left( \frac{1}{1- t}  \int_{\Sigma_t}  | \nabla u |^2 \right) .
\end{split}
\eee
Thus, if $[t_1, t_2]$ has no critical values, the above directly shows
\be \label{eq-integral-relation}
\begin{split}
& \  \int_{t_1}^{t_2} \left(   - \frac{1}{1 - t}  \int_{\Sigma_t}    H  | \nabla u | 
+  \frac{1}{ ( 1 - t)^2 } \int_{\Sigma_t}  | \nabla u |^2  \right) \\
= & \ 
 \frac{1}{1- t_2}  \int_{\Sigma_{t_2}}  | \nabla u |^2 
-  \frac{1}{1- t_1}  \int_{\Sigma_{t_1}}  | \nabla u |^2  .
\end{split}
\ee

In general, if $[t_1, t_2]$ has critical values, one may use a regularization argument to still obtain \eqref{eq-integral-relation}.
For instance, applying Lemma \ref{lem-reg} of the Appendix to $u$ on $ \Omega_{[t_1, t_2] }$, one has
\be \label{eq-integral-relation-2}
\begin{split}
& \ \frac{1}{1 - t_2}  \int_{\Sigma_{t_2} }  | \nabla u |^2   
-  \frac{1}{1 - t_1} \int_{\Sigma_{t_1}}   | \nabla u |^2 \\
= & \ \int_{\Omega_{[t_1, t_2]} }  \frac{ | \nabla u |^3  }{ ( 1 - u )^2 }   + 
\int_{ \{ \nabla u \ne 0  \}   \subset \Omega_{[t_1, t_2]} } \frac{1} { 1 - u} | \nabla u |^{-1}  \nabla^2 u ( \nabla u , \nabla u  ) .
\end{split}
\ee
This, together with the coarea formula and \eqref{eq-lap-conse}, gives \eqref{eq-integral-relation}.

By \eqref{eq-bdry-4}  and \eqref{eq-integral-relation},
\be \label{eq-Psi-explicit}
\Psi (t_1) - \Psi  (t_2) \ge  
 \int_{t_1}^{t_2}  \int_{\Sigma_t} \frac{1}{2}  | \mathring{\Pi} |^2 +  | \nabla u |^{-2}  | \nabla_{_{\Sigma_t} } | \nabla u | |^2  +  \frac{3}{4}  \left( H -  \frac{ 2 | \nabla u | }{ 1 - u } \right)^2  + \frac12 R  .
\ee
For the later purpose in Section \ref{sec-regularization}, we note that 
\eqref{eq-Psi-explicit} holds without assumptions on the scalar curvature $ R$.

If the scalar curvature $R$ is nonnegative, then \eqref{eq-Psi-explicit} implies $ \Psi (t_1) \ge \Psi(t_2)$,
which proves the Lemma. 
\end{proof}

In the context of Theorem \ref{thm-sec-ineq}, the assumption $\Sigma$ is connected and  
$H_2 (M, \Sigma) = 0 $ is a sufficient condition to ensure $\chi (\Sigma_t) \le 2$ for a regular $\Sigma_t$.
Under this condition, $u$ being harmonic and the maximum principle guarantee
$\Sigma_t$ is connected. 
(The same assumption was used by Bray and the author \cite{BrayMiao08}
in estimating the capacity of $\Sigma$ in $(M, g)$ via the solution to the weak inverse mean curvature ($1/H$) flow \cite{HI01}. In that setting, a different reasoning shows the level set of the $1/H$ flow is connected.) 

\begin{proof}[Proof of Theorem \ref{thm-sec-ineq}]
Let $ \Psi (t)$ be given from Lemma \ref{lem-monotone-1}. 
On an asymptotically flat $(M, g)$, a corollary of Lemma \ref{lem-limit-1} shows
\bee
\lim_{t \to 1}  \Psi (t)  = 0  .
\eee
Thus, letting $ t_2 \to 1 $ in \eqref{eq-Psi-explicit} gives 
\be \label{eq-Psi-expression}
\begin{split}
\Psi (t) \ge  & \ 
 \int_{t}^{1}  \int_{\Sigma_s} \frac{1}{2}  | \mathring{\Pi} |^2 +  | \nabla u |^{-2}  | \nabla_{_{\Sigma_t} } | \nabla u | |^2  +  \frac{3}{4}  \left( H -  \frac{ 2 | \nabla u | }{ 1 - u } \right)^2  + \frac12 R   
\end{split}
\ee
for every regular value $t$. 
In particular, if $ R \ge 0 $, then  $\Psi (t) \ge 0$.

Inequality \eqref{eq-bdry-ineq-0} follows from \eqref{eq-Psi-expression} by noting that
\be \label{eq-Psi-B-A}
\frac{1}{1-t} \Psi (t) = 3 B(t) - A (t) .
\ee

To show the rigidity case of \eqref{eq-bdry-ineq-0}, it suffices to establish it for the case $ t = 0 $.
Suppose the equality in \eqref{eq-bdry-ineq} holds, 
then, by \eqref{eq-Psi-expression} and its proof, 
for every regular value $t \in [0,1]$, $\Sigma_t$ is connected (orientable) with $\chi (\Sigma_t) = 2$,
hence $ \Sigma_t$ is a $2$-sphere; moreover, $R =0 $,  $ | \nabla u | $ only depends on $t$,  $\Sigma_t$ is totally umbilic, 
and $ H = \frac{ 2  }{ 1 - t } | \nabla u | $.

To show $(M, g)$ is isometric to $ \R^3$ minus a round ball, we start from a neighborhood of the boundary 
$\Sigma $.
For convenience, we normalize $(M, g)$ so that $| \Sigma | = 4 \pi $. 
It follows from the equality 
$$ 4 \pi + \int_\Sigma H | \nabla u | = 3 \int_\Sigma | \nabla u |^2  $$
that $ | \nabla u | = 1 $ and $H = 2 $ at $ \Sigma = \Sigma_0$.
Locally, $g$ takes the form of 
$ g = \eta (t) ^{-2} d t^2 + \gamma_t $ near $\Sigma_0$, 
where $t = u $, $ \eta (t) = | \nabla u | $ and $\gamma_t$ denotes the induced metric on $\Sigma_t$, 
which satisfies 
$
\p_t \gamma_t = 2 \eta(t)^{-1} \Pi_t =  \eta(t)^{-1} H \gamma_t = 2(1-t)^{-1} \gamma_t . 
$
Thus, $ (1 - t)^2 \gamma_t = $ a fixed metric. Similarly, since $ | \nabla u |' = - H $, 
$\eta (t)$ satisfies  $  \eta'(t) = \frac{- 2}{1-t} \eta (t) $. Hence, $ ( 1 - t )^{-2} \eta = $ a constant. 
As $ | \nabla u | = 1 $ at $ \Sigma$, we thus have $ \eta = (1-t)^2$ and
$ g =  (1-t)^{-4} d t^2 +  (1-t)^{-2} \sigma_o $ 
for some fixed metric $\sigma_o$ on the $2$-sphere $\Sigma$.
Invoking  the fact $ R = 0 $ near $\Sigma$, we see $\sigma_o$ is a round metric with Gauss curvature $1$ on $\Sigma$.

Now, if $u$ has a critical value, let $ t_0 \in (0,1) $ be the smallest critical value of $u$. 
The above argument then shows 
$ ( u^{-1} ([0, t_0) ) , g )$ is isometric to 
$$ \left( \Sigma \times [0, t_0) , (1-t)^{-4} d t^2 +  (1-t)^{-2} \sigma_o \right) . $$
In particular, this implies $ | \nabla u | = (1- t_0)^{2} \ne 0 $ on the set $ \p \{ u < t_0 \} = \p \{ u \ge t_0 \}  $.
As a result, $  \p \{ u \ge t_0 \}$ is an embedded surface in $M$. Therefore, $  \p \{ u \ge t_0 \} = \{ u = t_0 \} $ 
by the strong maximum principle. 
In summary, this shows $ \nabla u \ne 0 $ on the set $\{ u = t_0 \}$, 
which contradicts to the assumption $t_0$ is a critical value. 
Hence, $u$ has no critical values. We conclude $(M, g)$ is isometric to 
$$ \left( \Sigma \times [0, 1) , (1-t)^{-4} d t^2 +  (1-t)^{-2} \sigma_o \right) ,$$
which, upon a change of variable $ 1 - t = r^{-1}$, 
is isometric to $ \R^3 $ minus a unit ball.
\end{proof}

Theorem \ref{thm-sec-ineq} implies an upper bound of 
$ \frac{1}{(1-t)^2} \int_{\Sigma_t}  | \nabla u |^2 $ 
via $ \int_{\Sigma_t} H^2 $.  
  
\begin{cor} \label{cor-sec-ineq}
Let $(M, g)$ be a complete, orientable, asymptotically flat $3$-manifold with boundary $\Sigma$.
Suppose $ \Sigma$ is connected and $H_2 (M, \Sigma) = 0$.
Let $ u $ be the harmonic function such that $ u = 0 $ at $ \Sigma$ and $ u \to 1$ at $\infty$.
If $ g$ has nonnegative scalar curvature, then
\be \label{eq-int-gradu-2-est}
\frac{1}{4\pi} \int_\Sigma | \nabla u |^2  \le \frac{1}{9} \left[ 2 W + 2 \sqrt{ W^2 + 3 W } + 3 \right] ,
\ee
where $ W =   \frac{1}{16 \pi} \int_{\Sigma} H^2  $, 
and equality holds if and only if  $(M, g)$ is isometric to $ \R^3$ minus a round ball.
\end{cor}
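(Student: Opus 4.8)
The plan is to combine the boundary inequality \eqref{eq-bdry-ineq} of Theorem \ref{thm-sec-ineq} with a single application of the Cauchy--Schwarz inequality and then solve the resulting quadratic inequality. Write $P = \int_\Sigma |\nabla u|^2$. From \eqref{eq-bdry-ineq},
$$
3 P \le 4\pi + \int_\Sigma H |\nabla u| .
$$
By Cauchy--Schwarz and $\int_\Sigma H^2 = 16\pi W$,
$$
\int_\Sigma H |\nabla u| \le \left( \int_\Sigma H^2 \right)^{\frac12} \left( \int_\Sigma |\nabla u|^2 \right)^{\frac12} = 4 \sqrt{\pi W}\, \sqrt{P} ,
$$
so that $3P \le 4\pi + 4\sqrt{\pi W}\,\sqrt{P}$.

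Next I would normalize. Setting $x = \sqrt{P/(4\pi)} \ge 0$, the last inequality reads $3x^2 - 2\sqrt{W}\, x - 1 \le 0$, hence $x$ lies at or below the positive root of the quadratic:
$$
x \le \frac{2\sqrt{W} + \sqrt{4W+12}}{6} = \frac{\sqrt{W} + \sqrt{W+3}}{3} .
$$
Squaring and using $(\sqrt{W} + \sqrt{W+3})^2 = 2W + 3 + 2\sqrt{W^2+3W}$ gives
$$
\frac{1}{4\pi}\int_\Sigma |\nabla u|^2 = x^2 \le \frac{1}{9}\left[ 2W + 2\sqrt{W^2+3W} + 3 \right] ,
$$
which is \eqref{eq-int-gradu-2-est}.

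For the rigidity statement, equality in \eqref{eq-int-gradu-2-est} forces $x$ to equal the positive root, i.e. $3x^2 - 2\sqrt{W}\, x - 1 = 0$, which in turn forces equality in \emph{both} inequalities chained above: equality in \eqref{eq-bdry-ineq} and equality in the Cauchy--Schwarz step. By Theorem \ref{thm-sec-ineq}, equality in \eqref{eq-bdry-ineq} already implies $(M,g)$ is isometric to $\R^3$ minus a round ball. Conversely, on $\R^3$ minus a round ball the capacitary function $u$ is radial, so both $H$ and $|\nabla u|$ are constant along the round sphere $\Sigma$; hence \eqref{eq-bdry-ineq} and Cauchy--Schwarz both hold with equality, and a direct computation (with $\Sigma$ the unit sphere one has $|\nabla u| = 1$, $H = 2$, $|\Sigma| = 4\pi$, so $P = 4\pi$ and $W = 1$) confirms both sides of \eqref{eq-int-gradu-2-est} equal $1$. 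Therefore equality holds if and only if $(M,g) \cong \R^3 \setminus (\text{round ball})$.

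There is no serious analytic obstacle here: the argument is an algebraic manipulation on top of Theorem \ref{thm-sec-ineq}. The only point requiring a little care is the rigidity discussion, namely verifying that the two inequalities being chained are \emph{simultaneously} saturated on $\R^3$ minus a round ball (not merely one of them); this is immediate from the radial structure, which makes $H$ and $|\nabla u|$ constant on $\Sigma$ so that the Cauchy--Schwarz step is automatically an equality whenever \eqref{eq-bdry-ineq} is.
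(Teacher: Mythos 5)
Your argument is correct and is essentially the paper's own proof: the paper likewise combines the boundary inequality \eqref{eq-bdry-ineq} with H\"older's inequality to get $3z^2 \le 4\pi + \sqrt{16\pi W}\,z$ for $z = (\int_\Sigma |\nabla u|^2)^{1/2}$ and then solves the quadratic, with the rigidity case reduced to that of Theorem \ref{thm-sec-ineq}. Your extra verification that both chained inequalities are simultaneously saturated on $\R^3$ minus a round ball is a detail the paper leaves implicit, and it checks out.
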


\begin{proof}
Let $ z =  \left( \int_\Sigma | \nabla u |^2  \right)^\frac12 $.
By Theorem \ref{thm-sec-ineq} and H\"{o}lder's inequality, 
\bee
4 \pi + \sqrt{ 16 \pi W}  z \ge 3 z^2 .
\eee
This implies the bound of $z$ in \eqref{eq-int-gradu-2-est} by elementary reason. The equality case follows from the equality case in Theorem \ref{thm-sec-ineq}.
\end{proof}

We next apply Theorem \ref{thm-sec-ineq} to show that 
the quantities  in Theorem \ref{thm-limits}, which approach  to constant multiples of $\m \c_{_\Sigma}^{-1}$ at $\infty$,
are actually monotone.

\begin{thm} \label{thm-sec-ineq-2}
Let $(M, g)$ be a complete, orientable, asymptotically flat $3$-manifold with boundary $\Sigma$.
Suppose $ \Sigma$ is connected and $H_2 (M, \Sigma) = 0$.
Let $ u $ be the harmonic function such that $ u = 0 $ at $ \Sigma$ and $ u \to 1$ at $\infty$. 
If $ g$ has nonnegative scalar curvature, then
\begin{enumerate}
\item [(i)] 
$ \displaystyle 
 \mathcal{A}(t): = \frac{1}{1-t} \left[ 8 \pi -   \frac{1}{ 1 - t  } \int_{\Sigma_{t}}  H | \nabla u | \right] 
\  \nearrow  \ \text{as} \ t \nearrow ,
$
i.e. $\mathcal{A}(t)$ is monotone non-decreasing in $t$.
As a result, 
$
\mathcal{A} (t) \le 12 \pi \, \m \, \c_{_\Sigma}^{-1} .
$
In particular, at $\Sigma$, 
\be \label{eq-bdry-ineq-m-a}
8 \pi -  \int_\Sigma H | \nabla u |  \le  12 \pi \, \m \, \c_{_\Sigma}^{-1}  ,
\ee
and equality holds if and only if  $(M, g)$ is isometric to $ \R^3$ minus a round ball.

\item [(ii)] 
$ \displaystyle 
 \mathcal{B}(t): = \frac{1}{1-t} \left[ 4 \pi -   \frac{1}{ (1 - t )^2 } \int_{\Sigma_{t}}  | \nabla u |^2 \right] 
\  \nearrow  \ \text{as} \ t \nearrow ,
$
i.e. $\mathcal{B}(t)$ is monotone non-decreasing in $t$.
As a result, 
$ 
\mathcal{B} (t) \le 4 \pi \, \m \, \c_{_\Sigma}^{-1} .
$ 
In particular, at $\Sigma$, 
\be \label{eq-bdry-ineq-m-b}
4 \pi -  \int_\Sigma | \nabla u |^2  \le  4 \pi \, \m \, \c_{_\Sigma}^{-1}  ,
\ee
and equality holds if and only if  $(M, g)$ is isometric to $ \R^3$ minus a round ball.
\end{enumerate}
\end{thm}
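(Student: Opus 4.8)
The plan is to prove both monotonicity statements by differentiating $\mathcal{A}(t)$ and $\mathcal{B}(t)$ at a regular value $t$ and showing the derivative is nonnegative, the essential input being the inequality $A(t)\le 3B(t)$ of Theorem~\ref{thm-sec-ineq} (equivalently \eqref{eq-A-and-3B}). Once monotonicity is established, the bounds $\mathcal{A}(t)\le 12\pi\,\m\,\c_{_\Sigma}^{-1}$ and $\mathcal{B}(t)\le 4\pi\,\m\,\c_{_\Sigma}^{-1}$ will follow by letting $t\to 1$ and quoting the limits in Theorem~\ref{thm-limits}, and the boundary inequalities \eqref{eq-bdry-ineq-m-a}, \eqref{eq-bdry-ineq-m-b} are then the case $t=0$, since $\Sigma=\Sigma_0$ is a regular level set by the Hopf lemma. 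For rigidity I would trace equality back to the rigidity already contained in Theorem~\ref{thm-sec-ineq}. As in Lemma~\ref{lem-monotone-1}, the differential inequalities will first be obtained only on the full-measure set of regular values of $u$, and upgrading them to genuine monotonicity across the critical values will be carried out by the regularization argument of the Appendix (Lemma~\ref{lem-reg}).

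Part~(ii) is the short one. Writing $\mathcal{B}(t)=B(t)/(1-t)$ and using $B'(t)=\tfrac{1}{1-t}\bigl(2B(t)-A(t)\bigr)$, which is a rearrangement of \eqref{eq-B-prime}, one computes
\[
\mathcal{B}'(t)=\frac{(1-t)B'(t)+B(t)}{(1-t)^2}=\frac{3B(t)-A(t)}{(1-t)^2}\ \ge\ 0
\]
by \eqref{eq-A-and-3B}. Hence $\mathcal{B}$ is nondecreasing, so $\mathcal{B}(t)\le\lim_{s\to 1}\mathcal{B}(s)=4\pi\,\m\,\c_{_\Sigma}^{-1}$ by Theorem~\ref{thm-limits}(ii), and $t=0$ gives \eqref{eq-bdry-ineq-m-b}. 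If equality holds in \eqref{eq-bdry-ineq-m-b}, then $\mathcal{B}$ is constant on $[0,1)$, hence $\mathcal{B}'\equiv 0$, hence $A(t)=3B(t)$ at every regular $t$; equality in \eqref{eq-bdry-ineq-0} (say at $t=0$) then forces $(M,g)\cong\R^3$ minus a round ball by the rigidity in Theorem~\ref{thm-sec-ineq}, and the converse is the direct computation on $\R^3$ minus a ball with $u=1-|x|^{-1}$.

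Part~(i) is where the work lies. From $\mathcal{A}(t)=A(t)/(1-t)$, $A(t)=8\pi-\tfrac{1}{1-t}\int_{\Sigma_t}H|\nabla u|$, and $I(t):=\int_{\Sigma_t}H|\nabla u|$ one finds $\mathcal{A}'(t)=\bigl(2A(t)-8\pi-I'(t)\bigr)/(1-t)^2$. The plan is to insert the first-variation identity \eqref{eq-Hgrad-prime} for $I'(t)$, replace $\int_{\Sigma_t}K$ by $2\pi\chi(\Sigma_t)$ via Gauss--Bonnet, and complete the square on the mean-curvature terms using the \emph{same} substitution $H-\tfrac{2|\nabla u|}{1-u}$ that powered Lemma~\ref{lem-monotone-1}, i.e.
\[
\tfrac34 H^2-\tfrac{2}{1-t}H|\nabla u|=\tfrac34\Bigl(H-\tfrac{2|\nabla u|}{1-t}\Bigr)^2+\tfrac{1}{1-t}H|\nabla u|-\tfrac{3}{(1-t)^2}|\nabla u|^2 .
\]
Integrating over $\Sigma_t$ and using $\tfrac{1}{1-t}\int_{\Sigma_t}H|\nabla u|=8\pi-A(t)$ and $\tfrac{3}{(1-t)^2}\int_{\Sigma_t}|\nabla u|^2=12\pi-3B(t)$, I expect the identity to collapse to
\begin{multline*}
2A(t)-8\pi-I'(t)=\bigl(4\pi-2\pi\chi(\Sigma_t)\bigr)+\bigl(3B(t)-A(t)\bigr)\\
+\int_{\Sigma_t}\Bigl(|\nabla u|^{-2}|\nabla_{_{\Sigma_t}}|\nabla u||^2+\tfrac12|\mathring{\Pi}|^2+\tfrac34\bigl(H-\tfrac{2|\nabla u|}{1-t}\bigr)^2+\tfrac12 R\Bigr).
\end{multline*}
Each summand is $\ge 0$: $\chi(\Sigma_t)\le 2$ since the hypotheses ($\Sigma$ connected, $H_2(M,\Sigma)=0$) force $\Sigma_t$ to be a connected closed orientable surface, as observed before the proof of Theorem~\ref{thm-sec-ineq}; $3B(t)-A(t)\ge 0$ is Theorem~\ref{thm-sec-ineq}; and $R\ge 0$ by hypothesis. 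Therefore $\mathcal{A}'(t)\ge 0$, $\mathcal{A}$ is nondecreasing, $\mathcal{A}(t)\le\lim_{s\to 1}\mathcal{A}(s)=12\pi\,\m\,\c_{_\Sigma}^{-1}$ by Theorem~\ref{thm-limits}(i), and $t=0$ gives \eqref{eq-bdry-ineq-m-a}. Rigidity goes as in Part~(ii): equality in \eqref{eq-bdry-ineq-m-a} makes $\mathcal{A}$ constant, forcing every summand above to vanish identically, in particular $3B(t)-A(t)\equiv 0$, and Theorem~\ref{thm-sec-ineq} again identifies $(M,g)$ with $\R^3$ minus a round ball.

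The step I expect to be the crux is the choice of shift in the square completion for Part~(i): the computation only closes because the leftover $|\nabla u|^2$-term produced by completing the square is precisely what converts $8\pi-\tfrac{2}{1-t}\int_{\Sigma_t}H|\nabla u|+\tfrac34\int_{\Sigma_t}H^2$ into $\bigl(4\pi-2\pi\chi(\Sigma_t)\bigr)+\bigl(3B(t)-A(t)\bigr)$ plus a manifestly nonnegative integral; a coefficient different from the ``Schwarzschild'' value $2$ leaves a stray multiple of $B(t)$, which is not controlled as $t\to 1$, where $B(t)\to 0$. The remaining, purely technical point is the regularization (already consigned to the Appendix) needed to pass from the pointwise bounds $\mathcal{A}'(t),\mathcal{B}'(t)\ge 0$ at regular values to honest monotonicity of $\mathcal{A}$ and $\mathcal{B}$.
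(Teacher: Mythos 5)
Your proposal is correct and follows essentially the same route as the paper: part (ii) via the ODE $B'(t)=\frac{1}{1-t}(2B(t)-A(t))$ and the bound $A\le 3B$ from Theorem~\ref{thm-sec-ineq}, part (i) via \eqref{eq-Hgrad-prime}, Gauss--Bonnet, and completion of the square with the shift $2|\nabla u|/(1-u)$, with limits from Theorem~\ref{thm-limits}, rigidity traced back to Theorem~\ref{thm-sec-ineq}, and critical values handled by the Appendix regularization. The only cosmetic difference is that you record the part~(i) computation as an exact identity where the paper drops the manifestly nonnegative terms early and works with inequalities.
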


\begin{proof}
We first show (ii) as it is more straightforward. 
By \eqref{eq-B-prime} and \eqref{eq-A-and-3B}, at every regular value $t$, we have
\bee
\begin{split}
 - B'(t) = \frac{1}{ 1 - t }  \left[ - 2 B(t) + A (t)  \,  \right] \le \frac{1}{1-t} B(t) .
\end{split}
\eee
Therefore, 
$ \displaystyle \left[ \frac{1}{1-t} B(t) \right]' \ge 0 , $ 
which implies the monotonicity of $\mathcal{B} (t) = \frac{1}{1-t} B(t)$
in the case that $u$ has no critical values.  
If $u$ has critical values, we may again apply a regularization argument
to show that $\mathcal{B}(t_2) - \mathcal{B}(t_1) \ge 0 $ for $ t_2 > t_1$,
see Proposition \ref{prop-reg-monotone} in the Appendix for details. 

By Theorem \ref{thm-limits} (ii), 
$$ \lim_{t \to 1} \mathcal{B} (t) = 4 \pi \, \m \, \c_{_\Sigma}^{-1} .$$
Therefore, the monotonicity of $\mathcal{B} (t)$ shows 
$$ \mathcal{B} (t) \le 4 \pi \, \m \, \c_{_\Sigma}^{-1} . $$
At $ t =0 $, this gives
$$ \mathcal{B} (0) = 4 \pi - \int_{\Sigma}  | \nabla u |^2  \le 4 \pi  \, \m \, \c_{_\Sigma}^{-1} . $$
The rigidity part follows from the rigidity part of Theorem \ref{thm-sec-ineq}.

To show (i), we calculate
\bee
\begin{split}
\mathcal{A}'(t) = & \ \frac{1}{(1- t)^2} \left[  A (t)  -  \frac{1}{1-t}   \int_{\Sigma_t} H | \nabla u |  
- \left( \int_{\Sigma_t} H | \nabla u | \right)' \right] .
\end{split}
\eee
By  \eqref{eq-Hgrad-prime} and the Gauss-Bonnet theorem, 
\bee
\begin{split}
\mathcal{A}'(t) \ge & \ \frac{1}{(1- t)^2} \left[  A (t) - \frac{1}{1-t} \int_{\Sigma_t} H | \nabla u | 
+  \int_{\Sigma_t}  \left( - K +  \frac34 H^2  \right) \right] \\
\ge & \   \frac{1}{(1- t)^2} \left[  A (t) - \frac{1}{1-t} \int_{\Sigma_t} H | \nabla u | 
- 4 \pi +  \frac34 \int_{\Sigma_t}  H^2 \right] \\
=  & \   \frac{1}{(1- t)^2} \left[ \underbrace{4 \pi + \frac{1}{1-t} \int_{\Sigma_t} H | \nabla u | 
- \frac{3}{(1-t)^2} \int_{\Sigma_t} | \nabla u |^2 }_{I(t)}  \right. \\
& \ \ \ \ \ \ \ \ \ \ \ \ \  \left. +   \frac34 \int_{\Sigma_t}  \left( H - \frac{2 | \nabla u | }{ 1 - u } \right)^2 \right] .
\end{split}
\eee
By \eqref{eq-A-and-3B}, 
$$ I(t) = 3 B(t) - A (t) \ge 0  . $$
Therefore, $ \mathcal{A}'(t) \ge 0 $, which implies the monotonicity of $\mathcal{A}(t)$
in the absence of critical values. 
The general case can be again handled by a regularization argument that shows 
$\mathcal{A}(t_2) - \mathcal{A}(t_1) \ge 0 $ for $ t_2 > t_1$,
see Proposition \ref{prop-reg-monotone} in the Appendix. 

The remaining conclusions in (i) follow from Theorem \ref{thm-limits} (i) and Theorem \ref{thm-sec-ineq}.
\end{proof}

\begin{rem}
We need the technical assumption $\p \p \p g_{ij} = O (|x|^{-\tau -3} ) $ in obtaining 
\eqref{eq-bdry-ineq-m-a} and \eqref{eq-bdry-ineq-m-b} as Theorem \ref{thm-limits} is used in that step. 
For convenience, we include this assumption in the asymptotic flatness description \eqref{eq-AF} henceforth. 
\end{rem}

\begin{rem} \label{rem-comaring}
Comparing \eqref{eq-bdry-ineq}, \eqref{eq-bdry-ineq-m-a} and \eqref{eq-bdry-ineq-m-b}, 
we have $ \eqref{eq-bdry-ineq}  +  \eqref{eq-bdry-ineq-m-b}  \Rightarrow   \eqref{eq-bdry-ineq-m-a} $.
\end{rem}

\section{Proofs of the positive mass theorem} \label{sec-pmt}

The $3$-dimensional Riemannian positive mass theorem, first proved by 
Schoen-Yau \cite{SchoenYau79} and later by Witten \cite{Witten81},
asserts that if $(M, g)$ is a complete, asymptotically flat $3$-manifold with nonnegative scalar curvature, 
then $\m \ge 0 $ and $ \m = 0 $ if and only if $(M^3, g)$ is isometric to $ \R^3$.

Since the work of Schoen-Yau and Witten, 
other proofs of this theorem have been given by Huisken-Ilmanen \cite{HI01}, 
by Li \cite{Li18}, by Bray-Kazaras-Khuri-Stern \cite{BKKS19}, and by Agostiniani-Mazzieri-Oronzio \cite{AMO21}. 
(Agostiniani-Mantegazza-Mazzieri-Oronzio \cite{AMMO22} also gave a new proof of 
the Riemannian Penrose inequality, first proved by Bray \cite{Bray02} and Huisken-Ilmanen \cite{HI01}.)

As applications of Theorems \ref{thm-limits} and \ref{thm-sec-ineq-2}, 
we observe a few additional arguments that prove the positive mass theorem (PMT). 
We first outline the tools and features of the proofs to be given: 
\begin{itemize}

\vspace{.2cm}

\item Proof \RomanNum{1} uses Theorem \ref{thm-limits} (ii) and 
a result of Munteanu-Wang \cite{MunteanuWang21}.

\vspace{.2cm}

\item Proof \RomanNum{2} is self-contained. It makes use of  Theorem \ref{thm-limits} and 
Theorems \ref{thm-sec-ineq-2}. 

\vspace{.2cm}

\item Proof \RomanNum{3} is self-contained. It uses the inequalities in Theorem \ref{thm-sec-ineq-2}. 
Proof \RomanNum{3} leads to new sufficient conditions that guarantee the positivity of the mass, see Section \ref{sec-pmt-bdry}.

\vspace{.2cm}

\end{itemize}

\begin{proof}[Proof \RomanNum{1}]
Let $(M, g)$ be a complete, asymptotically flat $3$-manifold without boundary, with nonnegative 
scalar curvature. Suppose $M$ is topologically $ \R^3$.

Take  $ p \in M$. Let $ G (x) $ be the minimal positive Green's function with a pole at $p$, with $ G (x) \to 0 $ as $ x \to \infty$. 
Let $ u = 1 - G $. 
By Theorem 1.1 of Muntenau-Wang \cite{MunteanuWang21},  
\bee
\begin{split} 
 4\pi ( 1 - t ) -  \frac{1}{1 - t} \int_{\Sigma_t} | \nabla u |^2 
\  \searrow  \ \text{as} \ t \nearrow , 
\end{split}
\eee
i.e. it is monotone non-increasing in $t$. 

As $ t \to 1$,  $ \frac{1}{1 - t} \int_{\Sigma_t} | \nabla u |^2 \to 0 $  by Lemma \ref{lem-limit-1}.
Hence,  
$ 4\pi ( 1 - t ) -  \frac{1}{1 - t} \int_{\Sigma_t} | \nabla u |^2 \ge 0 $.
Consequently, 
\bee
\frac{1}{ (1-t) } \left[ 4\pi - \frac{1}{ (1 - t)^2 } \int_{\Sigma_t} | \nabla u |^2 \right] \ge 0 .
\eee
By Theorem \ref{thm-limits} (ii) and 
the fact $ u = 1 - \frac{1}{ 4 \pi } |x|^{-1} + O (|x|^{-1-\tau}) $, 
$$ \lim_{t \to 1} \frac{1}{ (1-t) } \left[ 4\pi - \frac{1}{ (1 - t)^2 } \int_{\Sigma_t} | \nabla u |^2 \right]  
= ( 4 \pi)^2  \m . $$
Hence $ \m \ge 0 $.
\end{proof}

\begin{rem}
To prove the $3$-dimensional positive mass theorem,  it is known it suffices to assume $M$ is topologically $ \R^3$, 
see Section 2 in \cite{BKKS19} for instance.
For this reason, we make such an assumption in all the proofs.
It is also known the rigidity case $m = 0 $ in the theorem follows from the inequality $\m \ge 0$ 
by a variational argument, see \cite{SchoenYau79}.
\end{rem}

\vspace{0.1cm}

\begin{proof}[Proof \RomanNum{2}]
Take $ p \in M$. Let $ G (x) $ be the minimal positive Green's function with a pole at $p$.
Let $ d (x)$ denote the distance from $x$ to $ p $ in $(M, g)$. 
As $ x \to p$, it is known
\be \label{eq-expansion-G-x}
 G (x) = \frac{1}{4\pi}  d(x)^{-1} + o ( d(x)^{-1} ) ,  \ 
| \nabla G (x) | = \frac{1}{4 \pi} d(x)^{-2} + o ( d (x)^{-2} ) .
\ee

Consider $ u = 1 - G $. By Theorem \ref{thm-sec-ineq-2} (ii),  
\be \label{eq-PM-monotonicity}
 \mathcal{B}(t) = \frac{1}{1-t} \left[ 4 \pi -   \frac{1}{ (1 - t )^2 } \int_{\Sigma_{t}}  | \nabla u |^2 \right] 
\  \nearrow  \ \text{as} \ t \nearrow ,
\ee
i.e. it is monotone non-decreasing in $t$. 
Note this is different from the monotonicity of Munteanu-Wang \cite{MunteanuWang21}.
The latter asserts $ (1-t)^2 \mathcal{B} (t)$ is monotone non-increasing.

As $ t \to - \infty$, by \eqref{eq-expansion-G-x}, 
$ \frac{1}{ (1 - t )^2 } \int_{\Sigma_{t}}  | \nabla u |^2 $ is bounded, hence 
$ \frac{1}{ (1 - t )^3 } \int_{\Sigma_{t}}  | \nabla u |^2 \to 0$.
Thus,
\be \label{eq-PM-limit-p}
 \lim_{t \to - \infty} \mathcal{B} (t) = 0 . 
\ee
Hence, by \eqref{eq-PM-monotonicity} and \eqref{eq-PM-limit-p}, 
$ \mathcal{B} (t) \ge 0 $. By Theorem \ref{thm-limits} (ii), 
$$ (4 \pi)^2 \, \m  = \lim_{t \to 1} \mathcal{B} (t) \ge 0 . $$ 
\end{proof}

\begin{rem}
Proof \RomanNum{2} is similar to the proof of Agostiniani-Mazzieri-Oronzio \cite{AMO21}.
The difference is the use of different monotone quantities, i.e. $\mathcal{B}(t)$ compared to $F(t)$.
A feature of the quantity $\mathcal{B} (t)$  is that it does not involve derivatives of the metric. 
\end{rem}

\begin{rem} \label{rem-work-A}
One can also work with $\mathcal{A}(t)$, and apply Theorem \ref{thm-sec-ineq-2} (i)
and Theorem \ref{thm-limits} (i). In this case, one checks 
$ \lim_{t \to - \infty} \frac{1}{(1-t)^2} \int_{\Sigma_t} H | \nabla u | = 0 $, which
follows from the known estimate on $ \nabla^2 G $ near the pole 
(see \cite{MunteanuWang21} and \cite{AMO21} for instance).
\end{rem}

\vspace{.1cm}

\begin{proof}[Proof \RomanNum{3}]
Take $ p \in M$. Given a small $r > 0 $, let $ B_r $ denote the geodesic ball 
of radius $r $ centered at $p$. Let $ \Sigma_r = \p B_r $ and $u = u_r$ 
be the harmonic function with $ u=0 $ at  $\Sigma_r $ and $ u \to 1$ at $\infty$.
Let $ \c_{r}$ be the capacity of $\Sigma_r$ in $(M, g)$. 

Applying  \eqref{eq-bdry-ineq-m-a} of Theorem \ref{thm-sec-ineq-2} (i) to $(M \setminus B_r , g)$, we have
\bee
8 \pi - \int_{\Sigma_r} H | \nabla u | \le 12 \pi \, \m \, c_{r}^{-1} .
\eee
Since $ c_r > 0$, this is equivalent to
\be \label{eq-pmt-ineq-i}
c_r \left( 8 \pi - \int_{\Sigma_r} H | \nabla u | \right) \le 12 \pi \, \m  .
\ee
It remains to check, as $ r \to 0$,  
\be \label{eq-small-cap-and-Hu}
 c_r = O (r) \ \text{and} \  \int_{\Sigma_r} H | \nabla u | = O (1) .
\ee
A conclusion $\m \ge 0$ will follow from \eqref{eq-pmt-ineq-i} and \eqref{eq-small-cap-and-Hu}.

To estimate $c_r$, we may use the variational characterization of the capacity, 
i.e. 
\be \label{eq-var-capacity}
 c_r = \inf_{f} \left\{ \frac{1}{4\pi} \int_{ M \setminus B_r} | \nabla f |^2 \right\} ,
\ee
where $f$ is a Lipschitz function with $ f= 0$ at $\Sigma_r$ and  $ f \to 1$ at $\infty$.
Consider a test function $ f (x) = r^{-1} \left(  d (x) - r \right) $
in $ B_{2 r} \setminus B_r$ and extend $f$ to be $1$ outside $B_{2 r}$.
Here $d (x)$ is the distance from $x $ to $p$. Then 
\be \label{eq-capacity-small-sphere}
\begin{split}
c_r \le & \ \frac{1}{4\pi} \int_{B_{2r} \setminus B_r} | \nabla f |^2 
=  \frac{1}{4\pi r^2 } \, \text{Volume}(B_{2 r \setminus B_r} )  = O (r) .
\end{split}
\ee

For  $\int_{\Sigma_r} H | \nabla u | $, we have
\bee
\left|  \int_{\Sigma_r} H | \nabla u | \right|  \le \max_{\Sigma_r} |H | \, \int_{\Sigma_r} | \nabla u | = 
\max_{\Sigma_r} |H | \, \c_r  = O (1)
 \eee
by \eqref{eq-capacity-small-sphere} and the fact $ H = 2 r^{-1} + O (r) $ (see (3.34) in \cite{FST08} for instance).

This verifies \eqref{eq-small-cap-and-Hu} and completes the proof.
\end{proof}

\begin{rem}
In the above proof, we estimated $\c_r$ by the so-called relative capacity of $\Sigma_r$ in $B_{2r} $.
By a result of Jauregui \cite{JJ21}, one can indeed check 
$$ \limsup_{r \to 0}  \left( 8 \pi - \int_{\Sigma_r} H | \nabla u | \right) \ge 0 .$$
\end{rem}

\begin{rem}
Alternatively one may use \eqref{eq-bdry-ineq-m-b} of Theorem \ref{thm-sec-ineq-2} (ii) to have
\bee
c_r \left( 4 \pi - \int_{\Sigma_r} | \nabla u |^2 \right) \le 4 \pi \, \m 
\eee
and check $  \int_{\Sigma_r} | \nabla u |^2  = O (1)$.
For instance, by the maximum principle, $ | \nabla u | \le  | \nabla v |$ at $ \p B_r$, where
$v $ is the harmonic function with $ v = 0 $ at $ \p B_r$ and  $ v = 1 $ at $ \p B_{2r}$.
By scaling and elliptic boundary estimates,
 $ \int_{\p B_r} | \nabla v |^2 = O (1)$ which shows $  \int_{\Sigma_r} | \nabla u |^2  = O (1)$.
\end{rem}

We want to mention that PMT is also an immediate corollary of 
Theorem \ref{thm-sec-more-ineq-3} in Section \ref{sec-promoting-ineq}, see Remark \ref{rem-one-more-proof-pmt}.

\section{Positive mass theorems with boundary} \label{sec-pmt-bdry}

Inspired by Proof \RomanNum{3} in the preceding section, 
we give some sufficient conditions that imply positive mass
on manifolds with boundary. 

\begin{thm} \label{thm-sec-pmt-bdry}
Let $(M, g)$ be a complete, orientable, asymptotically flat $3$-manifold 
with nonnegative scalar curvature, with boundary $\Sigma$.
Suppose $ \Sigma$ is connected and $H_2 (M, \Sigma) = 0$.
Let $ \Omega \subset M$ be a bounded region separating $\Sigma$ 
and $\infty$. More precisely, this means 
$ \p \Omega $ has two connected components $S_0$ and $S_1$, where 
$S_0$ encloses $\Sigma$ (and is allowed to 
coincide with $\Sigma$) and $S_1$ encloses $S_0$. 
Let $u_{_\Omega}$ be the function on $\Omega$ with 
$$ \Delta u_{_\Omega} = 0 , \ u_{_\Omega} |_{S_0} = 0 , \ \ \text{and} \ 
u_{_\Omega} |_{S_1 } = 1 . $$
Let 
$  \c (\Omega)  = \frac{1}{4\pi} \int_{\Omega} | \nabla u_{_\Omega}  |^2 
= \frac{1}{4\pi} \int_{S_0}  | \nabla u_{_\Omega} | $. 
Then
\be \label{eq-PM-condition-c}
H \le \frac{2}{ \c  (\Omega) }  \ \Longrightarrow \ \m > 0 . 
\ee
In particular, this implies 
\be \label{eq-PM-condition}
H \le  \frac{ 8 \pi L^2 }{\Vol(\Omega) } \  \Longrightarrow \ \m > 0 . 
\ee
Here $H$ is the mean curvature of $\Sigma $ in $(M,g)$, 
$ \Vol(\Omega)$ is  the volume of $(\Omega, g)$, and $L$ is the distance 
between $S_0$ and $S_1$. 
\end{thm}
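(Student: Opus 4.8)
The plan is to apply the boundary inequality \eqref{eq-bdry-ineq-m-a} of Theorem \ref{thm-sec-ineq-2} to $(M,g)$ with boundary $\Sigma$, and to reduce everything to the strict comparison $\c_{_\Sigma}<\c(\Omega)$ between the capacity of $\Sigma$ in $(M,g)$ and the relative capacity $\c(\Omega)$. Writing $u$ for the capacitary function of $(M,g)$ (so $u=0$ on $\Sigma$, $u\to1$ at $\infty$) and using $\c_{_\Sigma}>0$, \eqref{eq-bdry-ineq-m-a} gives $\c_{_\Sigma}\big(8\pi-\int_\Sigma H|\nabla u|\big)\le 12\pi\,\m$. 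Since $\int_\Sigma|\nabla u|=4\pi\c_{_\Sigma}$, the pointwise hypothesis $H\le 2\c(\Omega)^{-1}$ on $\Sigma$ yields $\int_\Sigma H|\nabla u|\le 2\c(\Omega)^{-1}\int_\Sigma|\nabla u|=8\pi\c_{_\Sigma}\c(\Omega)^{-1}$, so $12\pi\,\m\ge 8\pi\c_{_\Sigma}\big(1-\c_{_\Sigma}\c(\Omega)^{-1}\big)$. Hence \eqref{eq-PM-condition-c} follows as soon as $\c_{_\Sigma}<\c(\Omega)$.

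To prove $\c_{_\Sigma}<\c(\Omega)$ I would use the variational characterization of $\c_{_\Sigma}$ with an explicit test function. Let $\Omega_0$ denote the region between $\Sigma$ and $S_0$ (empty if $S_0=\Sigma$), and let $\Omega_\infty$ be the unbounded region outside $S_1$, so that $M=\Omega_0\cup\Omega\cup\Omega_\infty$. Define $f$ on $M$ by $f\equiv1$ on $\Omega_0$, $f=1-u_{_\Omega}$ on $\Omega$, and $f\equiv0$ on $\Omega_\infty$; this is a Lipschitz function equal to $1$ on $\Sigma$ and tending to $0$ at $\infty$, so $\c_{_\Sigma}\le\frac1{4\pi}\int_M|\nabla f|^2=\frac1{4\pi}\int_\Omega|\nabla u_{_\Omega}|^2=\c(\Omega)$. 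Strictness follows because the capacitary potential $\phi$ of $\Sigma$ is the unique minimizer of the Dirichlet energy among such functions — concretely, $\int_M|\nabla f|^2=\int_M|\nabla\phi|^2+\int_M|\nabla(f-\phi)|^2$, using harmonicity of $\phi$ and the vanishing of $f-\phi$ on $\Sigma$ and at $\infty$ — while $f\ne\phi$ since $f$ is locally constant on $\Omega_0$ (or on $\Omega_\infty$). Thus $\c_{_\Sigma}<\c(\Omega)$, and $\m>0$, which is \eqref{eq-PM-condition-c}.

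For the volumetric form \eqref{eq-PM-condition} it suffices to show $\c(\Omega)\le\Vol(\Omega)\,(4\pi L^2)^{-1}$, because then $2\c(\Omega)^{-1}\ge 8\pi L^2\Vol(\Omega)^{-1}$, so $H\le 8\pi L^2\Vol(\Omega)^{-1}$ implies $H\le 2\c(\Omega)^{-1}$ and \eqref{eq-PM-condition-c} applies. To get this capacity bound I would test the relative capacity of $\Omega$ against $h(x)=\min\{1,L^{-1}d(x,S_0)\}$, where $d(\cdot,S_0)$ is the distance to $S_0$: it is Lipschitz, vanishes on $S_0$, equals $1$ on $S_1$ because $d(x,S_0)\ge L$ there, and satisfies $|\nabla h|\le L^{-1}$ almost everywhere, so $\c(\Omega)\le\frac1{4\pi}\int_\Omega|\nabla h|^2\le\Vol(\Omega)\,(4\pi L^2)^{-1}$.

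The one point requiring care is the capacity comparison: although it is a standard monotonicity statement, the comparison function must be globally Lipschitz and realize exactly the energy $4\pi\c(\Omega)$, and it is the \emph{strict} inequality $\c_{_\Sigma}<\c(\Omega)$ — not merely $\c_{_\Sigma}\le\c(\Omega)$ — that upgrades the conclusion from $\m\ge0$ to $\m>0$, so this must be argued rather than assumed. I would also check that the mean curvature in the statement, taken with respect to the normal pointing toward $\infty$, is precisely the $H$ appearing in \eqref{eq-bdry-ineq-m-a}.
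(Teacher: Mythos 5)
Your proof is correct and follows essentially the same route as the paper: apply \eqref{eq-bdry-ineq-m-a} together with $\int_\Sigma|\nabla u|=4\pi\c_{_\Sigma}$, reduce \eqref{eq-PM-condition-c} to the strict comparison $\c_{_\Sigma}<\c(\Omega)$ obtained by extending $u_{_\Omega}$ (or $1-u_{_\Omega}$) to a global test function, and then deduce \eqref{eq-PM-condition} from the bound $\c(\Omega)\le \Vol(\Omega)(4\pi L^2)^{-1}$ given by the truncated distance function. Your explicit Dirichlet-energy orthogonality argument for the \emph{strictness} of $\c_{_\Sigma}<\c(\Omega)$ is a nice addition, as the paper asserts this inequality from the variational characterization without spelling it out.
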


\begin{proof} 
Let $u$ be the harmonic function on $M$ with $u=0$ at $\Sigma$ and $ u \to 1 $ at $\infty$.
By  \eqref{eq-bdry-ineq-m-a} of Theorem \ref{thm-sec-ineq-2} (i), 
\be \label{eq-mass-H-c}
\begin{split}
12 \pi \, \m \c_{_\Sigma}^{-1} \ge & \ 8 \pi - \int_{\Sigma} H | \nabla u | \\
\ge & \ 4 \pi ( 2 -  \c_{\Sigma}  \max_{\Sigma} H  ) ,
\end{split}
\ee
where $\c_{_\Sigma}$ is the capacity of $\Sigma$ in $(M, g)$. This shows 
\be \label{eq-pmt-ch}
 \max_{\Sigma} H \le 2 \c_{_\Sigma}^{-1}  \Longrightarrow  \m \ge  0 , 
\ \ \  \max_{\Sigma} H <  2 \c_{_\Sigma}^{-1}  \Longrightarrow  \m >  0 , 
\ee
respectively. 

Let $ D$ denote the region enclosed by $ S_1$ with $ \Sigma$. 
Extending $u_{_\Omega}$ to be $1$ on $M \setminus D$ and to be $0$ on 
$D \setminus \Omega$. 
By the variational characterization of the capacity,
\be \label{eq-c-and-com}
\c_{_\Sigma} < \frac{1}{4 \pi} \int_M | \nabla u_{_\Omega} |^2 =  \c (\Omega) .
\ee
Therefore, \eqref{eq-PM-condition-c} follows from \eqref{eq-pmt-ch} and \eqref{eq-c-and-com}.

To see \eqref{eq-PM-condition}, it suffices to estimate $ \c (\Omega)$. 
On $ \Omega$, consider a test function $f(x)$ which equals  $ L^{-1} d(x) $ if $ d (x) \le L $ and is identically $1 $ if $ d (x) \ge L$. Here $ d (x)$ denotes the distance from $x $ to $ S_0$.  
Then 
\be \label{eq-com-v-d}
\c (\Omega)  \le   \  \frac{1}{4\pi} \int_\Omega | \nabla f |^2 
\le \frac{1}{4\pi  L^2} \Vol (\Omega) .
\ee
Hence,  \eqref{eq-PM-condition} follows from \eqref{eq-PM-condition-c} and \eqref{eq-com-v-d}.
\end{proof}

The next result does not involve the mean curvature of the boundary. 
It makes use of \eqref{eq-bdry-ineq-m-b} in Theorem \ref{thm-sec-ineq-2} (ii).

\begin{thm} \label{thm-sec-pmt-bdry-2}
Let $(M, g)$, $\Omega$, $S_0$, $S_1$ and $u_{_\Omega}$ be given as in
Theorem \ref{thm-sec-pmt-bdry}.
Then
\be \label{eq-PM-condition-bu}
\int_{S_0} | \nabla u_{_\Omega} |^2 \le 4 \pi \Longrightarrow \ \m > 0 . 
\ee
\end{thm}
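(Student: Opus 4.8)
The plan is to reduce the hypothesis $\int_{S_0} |\nabla u_{_\Omega}|^2 \le 4\pi$ to the boundary inequality in Theorem \ref{thm-sec-ineq-2} (ii) applied to the exterior region $M \setminus D$, where $D$ is the region enclosed by $S_1$ together with $\Sigma$. The key is the comparison between the capacitary function $u$ on $M$ (with $u=0$ at $\Sigma$, $u\to 1$ at $\infty$) and the ``truncated'' function built from $u_{_\Omega}$. First I would extend $u_{_\Omega}$ to all of $M$ by setting it equal to $0$ on $D \setminus \Omega$ and equal to $1$ on $M \setminus D$; call this Lipschitz function $w$. By the variational characterization of capacity, $\c_{_\Sigma} \le \frac{1}{4\pi}\int_M |\nabla w|^2 = \c(\Omega)$, with strict inequality unless $u$ already agrees with $w$, which cannot happen here since $w$ is locally constant on an open set. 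So $\c_{_\Sigma} < \c(\Omega)$.

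Next, by \eqref{eq-bdry-ineq-m-b} of Theorem \ref{thm-sec-ineq-2} (ii) applied to $(M,g)$ with boundary $\Sigma$,
\[
4\pi - \int_\Sigma |\nabla u|^2 \le 4\pi \, \m \, \c_{_\Sigma}^{-1},
\]
so it suffices to show $\int_\Sigma |\nabla u|^2 < 4\pi$ to conclude $\m > 0$. The task is therefore to bound $\int_\Sigma |\nabla u|^2$ by $\int_{S_0} |\nabla u_{_\Omega}|^2$. Heuristically this should follow because $u$, being harmonic on a larger region and forced to $0$ on $\Sigma$, has its Dirichlet energy (hence its boundary flux density, in an integrated sense) controlled by that of $u_{_\Omega}$. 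The cleanest route is via the coarea identity $\int_\Sigma |\nabla u|^2 = \int_\Sigma |\nabla u| \cdot |\nabla u|$ together with the fact that $\int_\Sigma |\nabla u| = 4\pi \c_{_\Sigma}$; the real content is controlling $\max_\Sigma |\nabla u|$, or more precisely the flux-weighted average of $|\nabla u|$, by the corresponding quantity for $u_{_\Omega}$ on $S_0$. A comparison-principle argument on the region between $S_0$ and $\Sigma$ (where both functions are defined, with $u = 0 = u_{_\Omega}$ on $S_0$ after relabeling, or using that $S_0$ encloses $\Sigma$) should give a pointwise or integrated gradient bound. I expect the mechanism to be: on the ``inner'' collar $D \setminus \Omega'$, $u$ is harmonic and vanishes on $\Sigma$ while being at most the harmonic comparison function matching $u_{_\Omega}$'s behavior on $S_0$, so its normal derivative on $\Sigma$ integrates to give the stated bound.

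The main obstacle I anticipate is making the last comparison rigorous when $S_0$ is allowed to coincide with $\Sigma$ and when the topology of $\Omega$ versus $M \setminus D$ is not completely transparent; one must be careful that the level sets of $u$ stay connected (guaranteed by $\Sigma$ connected and $H_2(M,\Sigma)=0$, as used throughout Section \ref{sec-ineq}) so that Theorem \ref{thm-sec-ineq-2} genuinely applies. A secondary subtlety is the borderline case: the hypothesis has ``$\le 4\pi$'' but the conclusion is strict positivity $\m > 0$, so one needs the strict inequality $\c_{_\Sigma} < \c(\Omega)$ (which we have) to absorb the non-strictness, exactly as in the passage from \eqref{eq-pmt-ch} to \eqref{eq-PM-condition-c} in the proof of Theorem \ref{thm-sec-pmt-bdry}. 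If the direct gradient comparison proves awkward, an alternative is to run Proof \RomanNum{3}'s strategy directly: apply \eqref{eq-bdry-ineq-m-b} on $M \setminus D$ with boundary $S_1$, obtaining $\c(M\setminus D)\big(4\pi - \int_{S_1}|\nabla u_{M\setminus D}|^2\big) \le 4\pi\,\m$, and then relate $\int_{S_1}|\nabla u_{M\setminus D}|^2$ and $\c(M\setminus D)$ to the data on $S_0$ through $u_{_\Omega}$ via the maximum principle, which converts the hypothesis on $S_0$ into the required sign.
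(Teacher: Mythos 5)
Your primary route has a genuine gap: after invoking \eqref{eq-bdry-ineq-m-b} on $(M,g)$ with boundary $\Sigma$, you reduce the problem to showing $\int_\Sigma |\nabla u|^2 < 4\pi$, and then try to deduce this from the hypothesis $\int_{S_0}|\nabla u_{_\Omega}|^2 \le 4\pi$. That deduction cannot work: the hypothesis only constrains the geometry of $\Omega$ (the region between $S_0$ and $S_1$), whereas $\int_\Sigma |\nabla u|^2$ is governed by the geometry of the unconstrained region between $\Sigma$ and $S_0$, and can certainly exceed $4\pi$ there. Moreover, the maximum-principle comparison you gesture at requires the two harmonic functions to share a common zero level set, which $u$ (vanishing on $\Sigma$) and $u_{_\Omega}$ (vanishing on $S_0$) do not. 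Your fallback route has the same defect with $S_1$ in place of $\Sigma$: the capacitary function of $M\setminus D$ vanishes on $S_1$ while $u_{_\Omega}=1$ there, so again no Hopf-type gradient comparison on a common boundary is available.

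The missing idea is to apply \eqref{eq-bdry-ineq-m-b} to the exterior $\tilde M$ of $S_0$ itself. Let $\tilde u$ be harmonic on $\tilde M$ with $\tilde u = 0$ at $S_0$ and $\tilde u \to 1$ at $\infty$; then Theorem \ref{thm-sec-ineq-2}(ii) gives $\int_{S_0}|\nabla \tilde u|^2 < 4\pi \Rightarrow \m > 0$. Now $\tilde u$ and $u_{_\Omega}$ are both harmonic on $\Omega$ and both vanish on $S_0$, while $\tilde u < 1 = u_{_\Omega}$ on $S_1$; hence $\tilde u < u_{_\Omega}$ in the interior of $\Omega$ and $|\nabla \tilde u| < |\nabla u_{_\Omega}|$ at $S_0$. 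This strict pointwise inequality turns the non-strict hypothesis into $\int_{S_0}|\nabla \tilde u|^2 < 4\pi$, exactly the strictness mechanism you correctly anticipated but attached to the wrong pair of surfaces. (Your preliminary observation $\c_{_\Sigma} < \c(\Omega)$ is correct but is only needed for Theorem \ref{thm-sec-pmt-bdry}, not here.)
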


\begin{proof}
Let $ \tilde M$ denote the region outside $S_0$.  
Let $ \tilde u $ be the harmonic function on $ \tilde M$  with 
$ \tilde u =0$ at  $S_0$ and $ \tilde u \to 1 $ at $\infty$.
Applying \eqref{eq-bdry-ineq-m-b} of Theorem \ref{thm-sec-ineq-2} (ii) to  
$(\tilde M, g, \tilde u)$, we have
\be \label{eq-cap-band}
\int_{S_0} | \nabla \tilde u |^2 \le 4 \pi  \Longrightarrow \m \ge 0 , \ \ 
\int_{S_0} | \nabla \tilde u |^2 < 4 \pi  \Longrightarrow \m > 0 , 
\ee
respectively. On $(\Omega, g)$, the maximum principle shows
\be \label{eq-grad-u-comparison}
| \nabla \tilde u  | <  | \nabla u_{_\Omega} | \ \text{at} \ S_0. 
\ee
Therefore, \eqref{eq-PM-condition-bu} follows from \eqref{eq-cap-band} and \eqref{eq-grad-u-comparison}. 
\end{proof}

\begin{rem}
It may be worthy of noting that the condition in  \eqref{eq-PM-condition-bu}
and the upper bound of $H$ in \eqref{eq-PM-condition-c} 
only involve the $C^0$-geometry of $(\Omega, g)$.
\end{rem}


It is conceivable that Theorems \ref{thm-sec-pmt-bdry} and \ref{thm-sec-pmt-bdry-2}
may be used to study the mass of incomplete asymptotically flat $3$-manifolds. 
Recently Cecchini-Zeidler \cite{CZ21} and 
Lee-Lesourd-Unger \cite{LLU22} have given sufficient conditions, 
involving a positive lower bound of the scalar curvature 
on suitable regions in a manifold $(M^n, g)$ that is spin or of dimension $ 3 \le n \le 7$,
which guarantee the positivity of the mass. 
If such conditions are interpreted as shielding the incomplete part by regions with sufficiently 
positive scalar curvature, conditions in \eqref{eq-PM-condition-c}, \eqref{eq-PM-condition}
and \eqref{eq-PM-condition-bu} may be thought as shielding conditions in terms of the $C^0$-geometry of a separating region.

\vspace{.2cm}

We end this section with the following proposition which was known and proved previously via the weak inverse mean curvature ($1/H$) flow developed by Huisken-Ilmanen \cite{HI01}.
We include it here to show that the result can also be proved using harmonic functions.

\begin{prop} \label{prop-Willmore-pmt}
Let $(M, g)$ be a complete, orientable, asymptotically flat $3$-manifold with boundary $\Sigma$. Suppose $ \Sigma$ is connected and $H_2 (M, \Sigma) = 0$. If $g$ has nonnegative scalar curvature, 
then 
$$ \int_{\Sigma} H^2 \le 16 \pi \  \Longrightarrow \  \m \ge 0  , $$
and $\m = 0 $ if and only if $(M,g)$ is isometric to $ \R^3$ minus a round ball.
\end{prop}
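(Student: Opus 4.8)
The plan is to apply the monotonicity and boundary inequalities of Theorem~\ref{thm-sec-ineq-2} to reduce the Willmore hypothesis $\int_\Sigma H^2 \le 16\pi$ to the positivity conclusion $\m \ge 0$. Since the excerpt already establishes that $\m\c_{_\Sigma}^{-1}$ controls the boundary integrals $\int_\Sigma H|\nabla u|$ and $\int_\Sigma |\nabla u|^2$, the natural route is: first control $\int_\Sigma |\nabla u|^2$ from below by combining \eqref{eq-bdry-ineq-m-b} with Corollary~\ref{cor-sec-ineq} (or directly with \eqref{eq-bdry-ineq}), then feed a H\"older-type estimate on $\int_\Sigma H|\nabla u|$ into \eqref{eq-bdry-ineq-m-a}. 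Concretely, set $z = \left(\int_\Sigma |\nabla u|^2\right)^{1/2}$ and $W = \frac{1}{16\pi}\int_\Sigma H^2 \le 1$. By \eqref{eq-bdry-ineq-m-a} of Theorem~\ref{thm-sec-ineq-2}(i) and Cauchy--Schwarz,
\[
12\pi\,\m\,\c_{_\Sigma}^{-1} \;\ge\; 8\pi - \int_\Sigma H|\nabla u| \;\ge\; 8\pi - \sqrt{16\pi W}\;z .
\]
So it suffices to show $z \le \sqrt{16\pi W}^{-1}\cdot 8\pi$, i.e.\ $z\,\sqrt{W} \le 2\sqrt{\pi}$, which would be immediate once we have an upper bound for $z$ in terms of $W$.

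The key step is therefore the upper bound on $z$, and here Corollary~\ref{cor-sec-ineq} does exactly the needed work: it gives
\[
\frac{1}{4\pi} z^2 \;\le\; \frac{1}{9}\Big[ 2W + 2\sqrt{W^2+3W} + 3 \Big].
\]
When $W \le 1$ one checks by elementary estimation that the right-hand side is at most $1$ (equality at $W=1$), hence $z^2 \le 4\pi$, i.e.\ $z \le 2\sqrt{\pi}$. Combined with $W\le 1$ this gives $z\sqrt{W} \le 2\sqrt{\pi}$, hence $8\pi - \sqrt{16\pi W}\,z \ge 8\pi - 8\pi = 0$, so $12\pi\,\m\,\c_{_\Sigma}^{-1} \ge 0$, and since $\c_{_\Sigma} > 0$ we conclude $\m \ge 0$.

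For the rigidity statement, suppose $\m = 0$. Then all the inequalities above are equalities; in particular equality holds in Corollary~\ref{cor-sec-ineq} (forcing $W = 1$ and $z^2 = 4\pi$), and equality holds in \eqref{eq-bdry-ineq-m-a} of Theorem~\ref{thm-sec-ineq-2}(i). By the rigidity clause of Theorem~\ref{thm-sec-ineq-2}(i) (equivalently of Corollary~\ref{cor-sec-ineq}), $(M,g)$ is isometric to $\R^3$ minus a round ball. Conversely, on $\R^3$ minus a round ball one has $\m = 0$ directly. The main obstacle I anticipate is purely the elementary inequality verifying that $\frac19[2W + 2\sqrt{W^2+3W}+3] \le 1$ for $W\in[0,1]$ with equality only at $W=1$ — a one-variable calculus check, monotone increasing in $W$, so the bound is attained at the endpoint; once this is in hand the rest is a direct chaining of the already-proved results, and no new geometric input is required.
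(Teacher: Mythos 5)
Your proposal is correct and rests on the same key input as the paper's proof, namely Corollary \ref{cor-sec-ineq}, which under $W=\frac{1}{16\pi}\int_\Sigma H^2\le 1$ forces $\int_\Sigma|\nabla u|^2\le 4\pi$. The only (cosmetic) difference is the final step: the paper concludes immediately from \eqref{eq-bdry-ineq-m-b}, i.e. $4\pi\,\m\,\c_{_\Sigma}^{-1}\ge 4\pi-\int_\Sigma|\nabla u|^2\ge 0$, whereas you route through \eqref{eq-bdry-ineq-m-a} with an extra Cauchy--Schwarz, which is valid but slightly longer.
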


\begin{proof}
By Corollary \ref{cor-sec-ineq}, 
$$  \int_{\Sigma} H^2 \le 16 \pi  \ \Longrightarrow \ \int_\Sigma | \nabla u |^2 \le 4 \pi . $$
Hence,  $ \m \ge 0 $ by \eqref{eq-bdry-ineq-m-b}. The rigidity case follows from that of Corollary \ref{cor-sec-ineq}.
\end{proof}

\section{Integral identities for the mass-to-capacity ratio} \label{sec-mass-integrals}

In \cite{BKKS19}, Bray-Kazaras-Khuri-Stern found an integral identity for the mass of an asymptotically flat 
manifold. More precisely, if $(E, g)$ denotes the exterior region of a complete, asymptotically flat 
Riemannian $3$-manifold $(M, g)$ with mass $\m$, then
\be \label{eq-BKKS}
16 \pi \m \ge \int_{E} \left( \frac{ | \nabla^2 u  |}{  | \nabla u | } + R | \nabla u | \right) ,
\ee
where $u$ is a harmonic function on $ (E, g)$ satisfying Neumann boundary conditions at $ \p E$,
and which is asymptotic to one of the asymptotically flat coordinate functions at $\infty$. 
In particular, if the scalar curvature is nonnegative, then $\m \ge 0 $.

In this section, we derive mass identities analogous to \eqref{eq-BKKS} with $u$ being 
a harmonic function that equals $0$ at the boundary and is asymptotic to $1$ at $\infty$.

\begin{thm} \label{thm-sec-mass-integrals}
Let $(M, g)$ be a complete, orientable, asymptotically flat $3$-manifold with boundary $\Sigma$.
Suppose $ \Sigma$ is connected and $H_2 (M, \Sigma) = 0$.
Let $ u $ be the harmonic function such that $ u = 0 $ at $ \Sigma$ and $ u \to 1$ at $\infty$. 
Let $ \Phi_u$ be a symmetric $(0,2)$ tensor given by
\bee
\begin{split}
\Phi_u  =  \frac{ | \nabla u |^2 } { 1 - u }  g  -  \frac{  3 d u \otimes d u }{1-u}  .
\end{split}
\eee
Let $ \m $ be the mass of $(M, g)$ and $ \c_{_\Sigma}$ be the capacity of $\Sigma$ in $(M, g)$.
Then
\be \label{eq-M-identity}
\begin{split}
& \  \m \c_{_\Sigma}^{-1} - \left( 1  - \frac{1}{ 4 \pi}  \int_{\Sigma} | \nabla u |^2 \right) \\
\ge & \  \frac{1}{ 16 \pi} \int_M \left[ \frac{1}{ (1- u )^2} - 1 \right] \left(  \frac{ | \nabla^2 u - \Phi_u |^2 } {  | \nabla u | } 
+  R | \nabla u |  \right) 
\end{split} 
\ee
and
\be \label{eq-M-identity-2}
\begin{split}
& \  \m \c_{_\Sigma}^{-1} - \frac23 \left( 1 - \frac{1}{8\pi}  \int_{\Sigma} H | \nabla u | \right)  \\
\ge & \  \frac{1}{ 16 \pi} \int_M  \left[ \frac{1}{  (1- u )^2} - \frac13 \right] 
\left(  \frac{ | \nabla^2 u - \Phi_u |^2 } {  | \nabla u | } 
+  R | \nabla u |  \right) .
\end{split} 
\ee
\end{thm}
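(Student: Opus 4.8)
The plan is to derive both \eqref{eq-M-identity} and \eqref{eq-M-identity-2} from the monotone quantities $\mathcal{A}(t)$ and $\mathcal{B}(t)$ of Section \ref{sec-ineq}, together with the limits computed in Theorem \ref{thm-limits}, by integrating their derivatives against a suitable weight. The starting point is the differential identity for $\mathcal{B}(t)$ obtained from \eqref{eq-B-prime} and \eqref{eq-Psi-explicit}: rewriting $-B'(t) = \frac{1}{1-t}[-2B(t)+A(t)]$ and using $\Psi(t) = (1-t)(3B(t)-A(t))$, one sees that $\left[\frac{1}{1-t}B(t)\right]' = \frac{1}{(1-t)^3}\Psi(t) \ge$ an explicit nonnegative integrand. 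I would first record the pointwise algebraic fact that
\[
\frac12 | \mathring{\Pi} |^2 +  | \nabla u |^{-2}  | \nabla_{_{\Sigma_t} } | \nabla u | |^2  +  \frac{3}{4}  \left( H -  \frac{ 2 | \nabla u | }{ 1 - u } \right)^2
= | \nabla u |^{-2} | \nabla^2 u - \Phi_u |^2 ,
\]
i.e.\ that the three error terms appearing in Lemma \ref{lem-monotone-1} assemble into $|\nabla^2 u - \Phi_u|^2/|\nabla u|^2$ once one unpacks \eqref{eq-lap-conse}. This is the linear-algebra heart of the theorem, and verifying it — checking that the tangential-tangential, tangential-normal, and normal-normal blocks of $\nabla^2 u - \Phi_u$ have squared norms matching $|\mathring\Pi|^2$, $2|\nabla_{\Sigma_t}|\nabla u||^2$, and $\frac34|\nabla u|^2(H - 2|\nabla u|/(1-u))^2$ respectively — is the step I expect to be the main obstacle, though it is a finite computation.

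Granting that identity, \eqref{eq-Psi-explicit} gives, for $t_1 < t_2$ regular values,
\[
\mathcal{B}(t_2) - \mathcal{B}(t_1) = \int_{t_1}^{t_2} \frac{1}{(1-t)^3} \int_{\Sigma_t}\left( \frac{|\nabla^2 u - \Phi_u|^2}{|\nabla u|^2} + \frac12 R \right) dt
\]
(with the usual regularization of Appendix \ref{sec-regularization} to pass critical values). Now I would take $t_1 = 0$, let $t_2 \to 1$, invoke Theorem \ref{thm-limits}(ii) to replace $\lim \mathcal{B}(t_2) = 4\pi\,\m\,\c_{_\Sigma}^{-1}$, note $\mathcal{B}(0) = 4\pi - \int_\Sigma |\nabla u|^2$, and use the coarea formula $\int_0^1 \int_{\Sigma_t}(\cdot)\,|\nabla u|^{-1}\,dt = \int_M (\cdot)$ to convert the $t$-integral into an integral over $M$. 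The weight $\frac{1}{(1-u)^3}$ times the $|\nabla u|^{-1}$ from coarea, minus the "flat model" contribution which accounts for the $-1$ and yields the plain $4\pi$, produces the factor $\frac{1}{(1-u)^2} - 1$ multiplying $\frac{|\nabla^2 u - \Phi_u|^2}{|\nabla u|} + R|\nabla u|$ after clearing one power of $|\nabla u|$; dividing by $16\pi$ gives \eqref{eq-M-identity}. (One must check the flat-model subtraction is legitimate, i.e.\ that $\int_0^1 \frac{4\pi}{(1-t)^3}\big[(1-t)^{2}\cdot(\text{area density}) - \text{const}\big]$ converges — this is where the asymptotics of Lemma \ref{lem-limit-1} and the outer-minimizing estimates enter.)

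For \eqref{eq-M-identity-2} I would run the parallel argument starting from $\mathcal{A}(t)$. From the computation of $\mathcal{A}'(t)$ in the proof of Theorem \ref{thm-sec-ineq-2}(i), together with the same reassembly of error terms and the Gauss–Bonnet / scalar-curvature bookkeeping, one gets
\[
\mathcal{A}(t_2) - \mathcal{A}(t_1) = \int_{t_1}^{t_2} \frac{1}{(1-t)^2}\left[ I(t) + \frac34 \int_{\Sigma_t}\Big(H - \tfrac{2|\nabla u|}{1-u}\Big)^2 \right] dt ,
\]
and combining with $I(t) = 3B(t) - A(t)$ and the already-established identity for $\mathcal B(t_2)-\mathcal B(t_1)$ lets one express $\mathcal{A}(t_2) - \mathcal{A}(t_1)$ purely in terms of $\int_{\Sigma_t}\big(|\nabla u|^{-2}|\nabla^2 u - \Phi_u|^2 + \frac12 R\big)$ against the weight $\frac{1}{(1-t)}\big(\frac{1}{(1-t)^2} - \frac13\big)$ — the $\frac13$ being exactly what is needed so that the two copies of the error integrand (one from $\mathcal A$ directly, one inherited from $3\mathcal B$) combine with the right coefficient. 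Taking $t_1=0$, $t_2\to 1$, using Theorem \ref{thm-limits}(i) to get $\lim\mathcal A(t_2) = 12\pi\,\m\,\c_{_\Sigma}^{-1}$ and $\mathcal A(0) = 8\pi - \int_\Sigma H|\nabla u|$, and applying coarea as before yields \eqref{eq-M-identity-2} after dividing by $12\pi\cdot 4\pi/(16\pi) $ — more precisely, after matching constants so that the left side reads $\m\c_{_\Sigma}^{-1} - \frac23(1 - \frac{1}{8\pi}\int_\Sigma H|\nabla u|)$. Throughout, the inequalities (rather than equalities) come from the fact that the regularization of Lemma \ref{lem-reg} and the removal of critical-point contributions only lose a nonnegative amount, and $R \ge 0$ is \emph{not} needed for the identity form but the stated $\ge$ already incorporates discarding these terms; I would state the result as written and remark that on $\{\text{critical values}\}$-free manifolds it is an exact identity.
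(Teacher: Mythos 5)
Your overall architecture coincides with the paper's: assemble the error terms of Lemma \ref{lem-monotone-1} into $|\nabla^2 u-\Phi_u|^2$, integrate the monotonicity formulae for $\mathcal{B}$ and $\mathcal{A}$ from $t=0$ to $t=1$, identify the limits via Theorem \ref{thm-limits}, and pass to a bulk integral by the coarea formula. But two of your steps are wrong as written. First, the algebraic identity is off by a factor of $2$: the correct statement is
\[
\frac12 | \mathring{\Pi} |^2 + | \nabla u |^{-2} | \nabla_{\Sigma_t} | \nabla u | |^2 + \frac34 \Bigl( H - \frac{2|\nabla u|}{1-u} \Bigr)^2 \;=\; \frac12\, | \nabla u |^{-2} \, | \nabla^2 u - \Phi_u |^2 ,
\]
because the normal--normal block of $\nabla^2 u-\Phi_u$ contributes $|\nabla u|^2\bigl(H-\frac{2|\nabla u|}{1-u}\bigr)^2$ and the pure-trace part of the tangential block contributes another $\frac12|\nabla u|^2\bigl(H-\cdots\bigr)^2$, so the full squared norm carries $\frac32\bigl(H-\cdots\bigr)^2+2|\nabla u|^{-2}|\nabla_{\Sigma_t}|\nabla u||^2+|\mathring{\Pi}|^2$; your proposed block-by-block matching (a $\frac34$ in the normal--normal slot, $|\mathring{\Pi}|^2$ alone in the tangential slot) is not how the norm decomposes.

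Second, and more substantively, your displayed formula for $\mathcal{B}(t_2)-\mathcal{B}(t_1)$ conflates $\Psi(t)$ with its density. One has $\mathcal{B}'(t)=\frac{1}{(1-t)^3}\Psi(t)$, and \eqref{eq-Psi-explicit} together with $\Psi\to 0$ as $t\to1$ gives $\Psi(t)\ge\int_t^1\psi(s)\,ds$, where $\psi(s)=\frac12\int_{\Sigma_s}\bigl(|\nabla u|^{-2}|\nabla^2u-\Phi_u|^2+R\bigr)$ --- an integral over the entire exterior of $\Sigma_t$, not a single level-set integral at level $t$. The resulting lower bound for $4\pi\,\m\,\c_{_\Sigma}^{-1}-\mathcal{B}(0)$ is therefore the \emph{double} integral $\int_0^1\frac{1}{(1-t)^3}\int_t^1\psi(s)\,ds\,dt$, and the weight $\frac{1}{(1-u)^2}-1$ in \eqref{eq-M-identity} arises from Fubini (equivalently integration by parts), $\int_0^s(1-t)^{-3}\,dt=\frac12\bigl[(1-s)^{-2}-1\bigr]$, together with a verification via the decay estimates \eqref{eq-int-error-term} and \eqref{eq-int-scalar} that the boundary term $\lim_{t\to1}(1-t)^{-2}\int_t^1\psi$ vanishes. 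Your ``flat-model subtraction'' is not the mechanism and, as described, would leave the weight $(1-u)^{-3}$, which does not match the theorem. For \eqref{eq-M-identity-2}, the constant-matching you leave unresolved is handled in the paper by working instead with $\mathcal{A}-\mathcal{B}$ (the quantity $F$ of \cite{AMO21}), whose difference is bounded below by $\int\frac{1}{(1-t)^2}\psi(t)\,dt$ directly, with no inner integral, and then adding the $\mathcal{B}$ inequality; I would recommend that route over differentiating $\mathcal{A}$ alone.
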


\begin{proof}
By \eqref{eq-lap-conse}, along a regular level set $\Sigma_t$, $ (\nabla^2 u - \Phi_u )$ satisfies 
$$
\left( \nabla^2 u -  \Phi_u \right)  (\nu, \nu) =   - H | \nabla u |  + \frac{ 2 | \nabla u |^2}{1-u} ,  \ 
$$
$$  \left( \nabla^2 u -  \Phi_u \right)  (\nu, \cdot) |_{_{\Sigma_t} } = \la \nabla_{_{\Sigma_t}} | \nabla u | , \cdot \ra ,  $$
 $$
\left( \nabla^2 u -  \Phi_u  \right)  ( \cdot, \cdot) |_{_{\Sigma_t} } 
= | \nabla u | \left(  \Pi -  \frac{ | \nabla u | } { 1 - u }   \gamma \right) , 
$$
where $\gamma$ denotes the induced metric on $ \Sigma_t$. Therefore, 
\be \label{eq-difference-hessian-Phiu}
\begin{split}
& \  | \nabla u |^{-2}  \left| \nabla^2 u - \Phi_u  \right|^2  \\
= & \ \frac32 \left( H - \frac{ 2 | \nabla u |}{ 1 - u } \right)^2 + 2 | \nabla u |^{-2} | \nabla_{_{\Sigma_t} } | \nabla u | |^2  + | \mathring{\Pi} |^2 .
\end{split}
\ee

Given two regular values $t_1 < t_2$, by \eqref{eq-difference-mathcal-B} in Proposition \ref{prop-reg-monotone} of the Appendix, we have
\be  
\label{eq-d-B-r}
\mathcal{B} (t_2) - \mathcal{B} (t_1) 
=   \int_{t_1}^{t_2}   \frac{1}{(1-t)^2}  \left[ 3 B(t) - A (t) \right] .
\ee
By \eqref{eq-Psi-B-A} and \eqref{eq-Psi-expression}, 
\be 
3 B(t) - A (t) = \frac{1}{1-t} \Psi (t) \ \ \text{and} \ \ 
\Psi (t) \ge \int_{t}^{1} \psi(s) , 
\ee
where
\be 
\begin{split}
\psi (t) = & \ \int_{\Sigma_t}  \left[  \frac34 \left( H - \frac{ 2 | \nabla u | }{ 1 - u } \right)^2 
+ | \nabla u |^{-2} | \nabla_{_{\Sigma_t} } | \nabla u | |^2 
+ \frac12 | \mathring{ \Pi } |^2 +  \frac12 R  \right] \\
= & \ \frac12  \int_{\Sigma_t}  \left( \frac{ | \nabla u  - \Phi_u  |^2 }{ | \nabla u |^{2} }  +  R  \right) . 
\end{split}
\ee

Taking $ t_1 = 0 $ and letting  $t_2 \to 1$,  applying Theorem \ref{thm-limits}, we hence have
\be  \label{eq-mass-relation-1}
\begin{split}
4 \pi \m \c_{_\Sigma}^{-1}  - \mathcal{B} ( 0 ) 
= & \  \int_{0}^{1}   \frac{1}{(1-t)^3} \Psi (t) \\
\ge & \ \int_{0}^{1}   \frac{1}{(1-t)^3}  \left( \int_t^1 \psi (s) \right) .
\end{split}
\ee
Integration by parts gives 
\be
\begin{split}
& \ \int_{0}^{1}   \frac{1}{(1-t)^3}  \left( \int_t^1 \psi (s) \right) \\
= & \ \frac12 \left[ \lim_{t \to 1}  \frac{1}{  (1-t)^2 }     \int_t^1 \psi (s)  
- \int_0^1  \psi (s) +   \int_{0}^{1} \frac{ \psi (t) }{  (1-t)^2} \right] .
\end{split}
\ee
We claim
\be \label{eq-end-tend-to-zero}
 \lim_{t \to 1}  \frac{1}{  (1-t)^2 }     \int_t^1 \psi (s)   = 0 . 
\ee
This is because, by \eqref{eq-int-error-term},
$$
  \int_{t}^1 \int_{\Sigma_s}  | \nabla u |^{-2} | \nabla_{_\Sigma} | \nabla u | |^2 +  \frac{1}{2} | \mathring{\Pi} |^2   
  = O ( ( 1 - t )^{1 + 2 \tau} ), 
$$
and, by \eqref{eq-int-scalar}, 
$$
 \int_t^1 \int_{\Sigma_s}  |R| = o ( ( 1 - t )^2 ).
$$
Also, by \eqref{eq-H-and-t},  \eqref{eq-nabu-and-t} and Lemma \ref{lem-area}, 
\be
\int_t^1 \int_{\Sigma_s} \left( H - \frac{2 | \nabla u |}{ 1 - u } \right)^2 = O ( ( 1 - t)^{1 + 2 \tau} ) .
\ee
Therefore, \eqref{eq-end-tend-to-zero} holds. 

Now it follows from \eqref{eq-mass-relation-1} -- \eqref{eq-end-tend-to-zero} that
\be \label{eq-M-B-0}
\begin{split}
& \   4 \pi \m \c_{_\Sigma}^{-1} - \mathcal{B} (0)  \\
\ge & \ \frac12 \int_0^1 \left[ \frac{1}{ (1-t)^2} - 1 \right] \psi (t) \\
= & \ \frac14  \int_0^1 \left[ \frac{1}{ (1-t)^2} - 1 \right] 
 \int_{\Sigma_t}  \left(  \frac{ | \nabla^2 u - \Phi_u |^2 }{  | \nabla u |^{2} } +  R  \right) \\
 = & \ \frac14 \int_M \left[ \frac{1}{ (1- u )^2} - 1 \right] 
 \left( \frac{ | \nabla^2 u - \Phi_u |^2 } {| \nabla u | }  +  R | \nabla u |  \right).
\end{split}
\ee
This proves \eqref{eq-M-identity}.

Similarly, by \eqref{eq-monotone-AMO} in Proposition \ref{prop-reg-monotone} of the Appendix,
\be \label{eq-AMO-difference} 
\left[ \mathcal{A}(t_2) - \mathcal{B}(t_2) \right] - \left[ \mathcal{A}(t_1) - \mathcal{B}(t_1) \right]  
\ge \int_{t_1}^{t_2}   \frac{1}{(1-t)^2}  \psi (t) .
\ee
Taking $ t_1 = 0 $, letting $t_2 \to 1$ and applying Theorem \ref{thm-limits}, we have
\be \label{eq-AMO-M-identity}
\begin{split}
& \ 8 \pi \m \c_{_\Sigma}^{-1}  - (\mathcal{A}(0) - \mathcal{B}(0) ) \\
\ge & \ \frac12 \int_{0}^{1}   \frac{1}{(1-t)^2}  
\int_{\Sigma_t} \left(  \frac{ | \nabla^2 u - \Phi_u |^2 }{  | \nabla u |^{2} } +  R  \right)  \\ 
= & \  \frac12  \int_M   \frac{1}{(1-u)^2}  
\left( \frac{ | \nabla^2 u - \Phi_u |^2 }{  | \nabla u | }  +  R | \nabla u |  \right) .
\end{split}
\ee
This together with \eqref{eq-M-B-0} proves \eqref{eq-M-identity-2}.
\end{proof}

\begin{rem} \label{rem-AMO-ineq}
If the scalar curvature $ R$ is nonnegative,
then \eqref{eq-M-identity} implies \eqref{eq-bdry-ineq-m-b}, 
\eqref{eq-M-identity-2} implies \eqref{eq-bdry-ineq-m-a}, and
\eqref{eq-AMO-M-identity} implies 
\be \label{eq-AMO-ineq}
4 \pi - \int_{\Sigma} H | \nabla u | + \int_{\Sigma} | \nabla u |^2 \le 8 \pi \m \c_{_\Sigma}^{-1} .
\ee
For manifolds that are spatial Schwarzschild manifolds near infinity, 
\eqref{eq-AMO-ineq} also followed from the work of  Agostiniani-Mazzieri-Oronzio \cite{AMO21}.
On the other hand, one can see 
\eqref{eq-AMO-ineq} is an algebraic consequence of  
\eqref{eq-bdry-ineq} and
\eqref{eq-bdry-ineq-m-b}. 
\end{rem}

\begin{rem}
Suppose $M$ in Theorem \ref{thm-sec-mass-integrals} has no boundary. 
Taking $ u = 1 - 4 \pi G$, 
where $G$ is the minimal  positive Green's function with a pole at some $ p \in M$, 
and letting $t_2 \to 1 $ and $ t_1 \to - \infty$ in \eqref{eq-AMO-difference}, 
one has 
\be \label{eq-AMO-M-g-identity}
\begin{split}
 \m  
\ge & \  \frac{1}{ ( 8 \pi )^2 }  \int_M   \frac{1}{ G^2}  
\left( \frac{ |   \nabla^2 G  +   \Phi_G  |^2 }{    | \nabla G  | }  +    R  | \nabla G |  \right)  .
\end{split}
\ee
Here $ \Phi_G$ is the $(0,2)$ tensor given by
\bee
\begin{split}
\Phi_G  =    \frac{ | \nabla G  |^2 } {  G }  g  -  \frac{  3 d G \otimes d  G  }{ G }  .
\end{split}
\eee 
\eqref{eq-AMO-M-g-identity} gives the integral version of the proof of 
the $3$-d positive mass theorem in \cite{AMO21}.

\end{rem}

\section{Promoting inequalities via Schwarzschild models} \label{sec-promoting-ineq}

Inequalities in Section \ref{sec-ineq} are derived via monotone quantities that become constant  
in Euclidean spaces outside round balls.
As a result, they are strict inequalities when evaluated in spatial Schwarzschild manifolds with nonzero mass
outside rotationally symmetric spheres.

Inspired by Bray's proof of the Riemannian Penrose inequality \cite{Bray02}, in this section we apply results 
from the previous sections to derive inequalities that become equality in Schwarzschild spaces.

We first outline the idea. Given a tuple $(M, g, u)$ satisfying assumptions in 
Theorem \ref{thm-sec-ineq} (or equivalently in Theorem \ref{thm-sec-ineq-2}), 
let $v$ be any other harmonic function on $(M, g)$ with $ v \to 1 $ at $\infty$ and $ v > 0 $ at $ \Sigma$. 
The following facts hold:

\vspace{.1cm}

\begin{enumerate}
\item the metric $ \bar g : = v^4 g $ is asymptotically flat, with nonnegative scalar curvature; 

\vspace{.1cm}

\item the function $ \bar u : = v^{-1} u $ is a harmonic function with respect to the metric $\bar g$,
and satisfies $ \bar u = 0 $ at $ \Sigma$ and $ \bar u \to 1 $ at $ \infty $. 

\end{enumerate}
Thus, results from the previous sections are applicable to $M$ with the conformally deformed metric $\bar g$
and the $\bar g$-harmonic function $\bar u$.

To proceed, we compute the quantities involved.  
Let $\bar \nabla$  denote the gradient on $(M, \bar g)$, let $\bar H$ be the mean curvature of $ \Sigma$
in $(M, \bar g)$ with respect to the $\infty$-pointing normal. 
Let $ d \sigma$, $ d \bar \sigma$ denote the surface measure on $\Sigma$ in $(M, g)$, $(M, \bar g)$, respectively. 
As $\Sigma$ has dimension two, it can be checked 
\be \label{eq-conformal-bdry-relation-1}
\int_{\Sigma} | \bar \nabla \bar u |_{\bar g}^2 \, d \bar \sigma = \int_{\Sigma}  | \nabla \bar u |^2 \, d \sigma .
\ee
(We omitted writing the area and volume measures in previous integrals as there 
was only one metric $g$ involved therein.)
The mean curvature $\bar H$ is related to the mean curvature $H$ of $ \Sigma$ in $(M, g)$ via 
$ \bar H = v^{-2} \left( 4 v^{-1} \p_\nu v + H \right) $. 
Thus,
\be
\int_\Sigma \bar H | \bar \nabla \bar u |_{\bar g} \, d \bar \sigma = 
\int_\Sigma   \left( 4 v^{-1} \p_\nu v + H \right)    |  \nabla \bar u | \,  d \sigma .
\ee
Let $\bar \m $ denote the mass of $(M, \bar g)$. $\bar \m $ and $\m$ are related by 
\be
\bar \m = \m - 2 \c_v ,
\ee
where $\c_v$ is the constant in the expansion 
\bee
v = 1 - \frac{ \c_v}{ |x| } + o (|x|^{-1}) ,
\eee
as $ x \to \infty$. Since $\bar u = v^{-1} u $,  $\bar u $ satisfies
\bee
\bar u =  1 - \frac{ \left( \c_{_\Sigma} - \c_v \right)  }{ | x|} + o ( |x|^{-1} ),
\eee
where $ \c_{\Sigma } > \c_v$ by the fact $ v > u $ and the maximum principle.
The capacity of $\Sigma$ in $(M, \bar g)$, which we denote by $ \bar \c_{_\Sigma}$, is then given by
\be
\bar \c_{_\Sigma} = \c_{_\Sigma} - \c_v . 
\ee
Finally, we note, as $ u = 0 $ at $ \Sigma$,
\be \label{eq-conformal-bdry-relation-l}
| \nabla \bar u |  = v^{-1} | \nabla u | \ \ \text{at} \ \Sigma. 
\ee

We want to seek implications of the 
inequalities \eqref{eq-bdry-ineq}, \eqref{eq-bdry-ineq-m-b},  \eqref{eq-bdry-ineq-m-a}  and
\eqref{eq-AMO-ineq}, i.e. 
\be \label{eq-ineq-repeat-1}
4 \pi + \int_{\Sigma} H | \nabla u | \ge 3 \int_{\Sigma} | \nabla u |^2 ,
\ee
\be \label{eq-ineq-repeat-2}
4 \pi - \int_{\Sigma} | \nabla u |^2 \le 4 \pi \m \, \c_{_\Sigma}^{-1}, 
\ee
\be \label{eq-ineq-repeat-3}
8 \pi - \int_{\Sigma} H | \nabla u |  \le 12 \pi \m \, \c_{_\Sigma}^{-1}, 
\ee
\be \label{eq-ineq-repeat-4}
4 \pi - \int_{\Sigma} H | \nabla u | +  \int_{\Sigma} | \nabla u |^2 \le 8 \pi \m \, \c_{_\Sigma}^{-1} ,
\ee
when they are applied to the conformally deformed triple $(M, \bar g, \bar u )$.
As mentioned in Remark \ref{rem-comaring} and Remark \ref{eq-AMO-ineq}, 
one knows 
$$ \eqref{eq-ineq-repeat-1}  \  + \  \eqref{eq-ineq-repeat-2} \ \Longrightarrow \ 
\eqref{eq-ineq-repeat-3} \ \ \text{and}  \ \ \eqref{eq-ineq-repeat-4} . $$
For this reason, we focus on the use of \eqref{eq-ineq-repeat-1} and \eqref{eq-ineq-repeat-2} below.

\begin{thm} \label{thm-sec-conformal-ineq-1}
Let $(M, g)$ be a complete, orientable, asymptotically flat $3$-manifold with boundary $\Sigma$.
Suppose $ \Sigma$ is connected and $H_2 (M, \Sigma) = 0$.
Let $ u $ be the harmonic function such that $ u = 0 $ at $ \Sigma$ and $ u \to 1$ at $\infty$. 
If $ g$ has nonnegative scalar curvature, then, for any constant $ k > 0$, 
\be \label{eq-sec-conformal-ineq-1}
4 \pi + k \int_\Sigma   H  |  \nabla u | \ge k ( 4 - k ) \int_\Sigma  |  \nabla u |^2.
\ee
Moreover, equality in \eqref{eq-sec-conformal-ineq-1} holds for some $k$ 
if and only if $(M, g)$ is isometric to a spatial Schwarzschild manifold outside a rotationally symmetric sphere,
that is, up to isometry, 
$$ (M, g) = \left( \R^3 \setminus \{ | x | < r \} , \left( 1 + \frac{\m}{ 2 | x |} \right)^4 g_{_E} \right) ,$$
where $ r > 0 $ is a constant,  $ g_{_E} = \delta_{ij} d x^i d x^j$ is the Euclidean metric,  and $\m$, $k$, $r$ 
are related by $ \m = 2 r ( k - 1 ) $.

\end{thm}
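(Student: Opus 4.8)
The plan is to obtain \eqref{eq-sec-conformal-ineq-1} by applying the inequality \eqref{eq-bdry-ineq} of Theorem \ref{thm-sec-ineq} to a suitable conformal deformation of $(M, g, u)$, using the transformation rules recorded at the start of this section. Given $k > 0$, the natural auxiliary function is the affine combination
\bee
v = \frac{1 + (k - 1) u}{k} ,
\eee
which is harmonic on $(M, g)$, tends to $1$ at $\infty$, equals $1/k > 0$ on $\Sigma$, and (being bounded below by $\min\{1/k,1\}$) is positive on all of $M$. Hence facts (1)--(2) above apply: $\bar g := v^4 g$ is a complete, orientable, asymptotically flat metric with nonnegative scalar curvature, $\Sigma$ is still connected with $H_2(M, \Sigma) = 0$, and $\bar u := v^{-1} u$ is the $\bar g$-harmonic function with $\bar u = 0$ on $\Sigma$ and $\bar u \to 1$ at $\infty$. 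Theorem \ref{thm-sec-ineq} applied to $(M, \bar g, \bar u)$ then gives
\bee
4 \pi + \int_\Sigma \bar H \, | \bar \nabla \bar u |_{\bar g} \, d \bar\sigma \ \ge \ 3 \int_\Sigma | \bar \nabla \bar u |_{\bar g}^2 \, d \bar\sigma .
\eee

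The next step is to rewrite this in terms of $(M, g, u)$. On $\Sigma$ one has $v = 1/k$ and, with $\nu = | \nabla u |^{-1} \nabla u$, $\p_\nu v = \frac{k - 1}{k} | \nabla u |$, so $4 v^{-1} \p_\nu v + H = 4(k - 1) | \nabla u | + H$, while $| \nabla \bar u | = v^{-1} | \nabla u | = k | \nabla u |$ on $\Sigma$ by \eqref{eq-conformal-bdry-relation-l}. Feeding these into \eqref{eq-conformal-bdry-relation-1} and the boundary identity $\int_\Sigma \bar H | \bar \nabla \bar u |_{\bar g} \, d \bar\sigma = \int_\Sigma ( 4 v^{-1} \p_\nu v + H ) | \nabla \bar u | \, d\sigma$, the displayed inequality becomes
\bee
4 \pi + k \int_\Sigma H | \nabla u | + 4 k (k - 1) \int_\Sigma | \nabla u |^2 \ \ge \ 3 k^2 \int_\Sigma | \nabla u |^2 ,
\eee
and moving the $4k(k-1)$ term to the right, together with $3 k^2 - 4 k (k - 1) = k(4 - k)$, yields exactly \eqref{eq-sec-conformal-ineq-1}. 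Since every step is a substitution of identities, equality in \eqref{eq-sec-conformal-ineq-1} for a given $k$ is equivalent to equality in \eqref{eq-bdry-ineq} for $(M, \bar g, \bar u)$.

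For the rigidity statement I would invoke the equality case of Theorem \ref{thm-sec-ineq}: equality holds iff $(M, \bar g)$ is isometric to $\R^3$ minus a round ball $\{ |x| < r \}$, in which case $\bar u = 1 - r |x|^{-1}$. Undoing the conformal change, $u = v \bar u$ and the definition of $v$ give $v = \big( k - (k-1) \bar u \big)^{-1}$, hence $v^{-1} = 1 + (k-1) r |x|^{-1}$ and
\bee
g = v^{-4} \bar g = \left( 1 + \frac{(k-1) r}{|x|} \right)^4 g_{_E}
\eee
on $\R^3 \setminus \{ |x| < r \}$, i.e. the spatial Schwarzschild exterior with $\m = 2(k-1) r$, matching the stated formula. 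Conversely, on $\big( \R^3 \setminus \{|x| < r\}, ( 1 + \tfrac{\m}{2|x|} )^4 g_{_E} \big)$ with $\m = 2r(k-1)$, one computes the capacitary function $u$ explicitly (it is $( 1 + \tfrac{\m}{2|x|} )^{-1}$ times the Euclidean-harmonic function $1 - r|x|^{-1}$), checks that the corresponding $v = \frac{1+(k-1)u}{k}$ satisfies $v^4 g = g_{_E}$, so $(M, \bar g)$ is flat outside a round ball and equality holds in \eqref{eq-bdry-ineq}, hence in \eqref{eq-sec-conformal-ineq-1}. The only genuinely delicate point is this two-directional rigidity bookkeeping — correctly identifying the Schwarzschild mass as $2r(k-1)$, including the cases $0 < k < 1$ where $\m < 0$, and verifying the converse by the explicit computation on the Schwarzschild exterior; everything else is routine manipulation of conformal transformation laws.
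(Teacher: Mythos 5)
Your proposal is correct and follows essentially the same route as the paper: the same conformal factor $v = u + \tfrac{1}{k}(1-u)$ (written equivalently as $\tfrac{1+(k-1)u}{k}$), the same application of \eqref{eq-bdry-ineq} to $(M, v^4 g, v^{-1}u)$, the same algebra giving $3k^2 - 4k(k-1) = k(4-k)$, and the same identification $v^{-1} = 1 + r(k-1)|x|^{-1}$ in the rigidity case. Your explicit verification of the converse on the Schwarzschild exterior is a slight elaboration of what the paper leaves implicit, but the argument is the same.
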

\begin{proof}
Given any positive harmonic function $ v  $ on $(M, g)$, let $ \bar g = v^4 g $ and $ \bar u = v^{-1} u $.
Applying \eqref{eq-bdry-ineq} in Theorem \ref{thm-sec-ineq} to the triple $(M, \bar g, \bar u )$, we have
\be \label{eq-conformal-ineq-1}
4 \pi + \int_\Sigma \bar H | \bar \nabla \bar u |_{\bar g } \, d \bar \sigma  
\ge  3 \int_\Sigma | \bar \nabla \bar u |_{\bar g}^2  \, d \bar \sigma .
\ee
By \eqref{eq-conformal-bdry-relation-1} -- \eqref{eq-conformal-bdry-relation-l}, \eqref{eq-conformal-ineq-1} shows
\be \label{eq-conformal-ineq-2}
4 \pi + \int_\Sigma   \left( 4 v^{-1} \p_\nu v + H \right)  v^{-1}  |  \nabla u | \,  d \sigma  
\ge  3 \int_\Sigma  v^{-2} |  \nabla u |^2  \, d \sigma . 
\ee

Given any constant $k > 0$, choose 
\be \label{eq-choice-of-v-k}
  v = u + \frac{1}{k} ( 1 - u ) . 
\ee
It follows from \eqref{eq-conformal-ineq-2} and the fact $ \p_\nu u = | \nabla u | $ at $ \Sigma$ that
\bee
4 \pi + k \int_\Sigma   H  |  \nabla u | \,  d \sigma  
\ge  k ( 4 - k ) \int_\Sigma  |  \nabla u |^2  \, d \sigma ,
\eee
which proves \eqref{eq-sec-conformal-ineq-1}.

The above also shows equality in \eqref{eq-sec-conformal-ineq-1} holds for some $ k $ if and only if 
equality in \eqref{eq-conformal-ineq-1} holds for the corresponding $(M, \bar g, \bar u )$.
By Theorem \ref{thm-sec-ineq}, this occurs if and only if $(M, \bar g)$ is isometric 
to $ \left( \R^3 \setminus B_r , g_{_E} \right)$, where $B_r = \{ x \in \R^3 \ | \ |x| < r \}$ 
for some constant $ r > 0$.
In this case, 
\be
\bar u = 1 - \frac{r}{ | x| } .
\ee
This combined with \eqref{eq-choice-of-v-k} and the fact $ \bar u = v^{-1} u $ shows
\be
v^{-1} = 1 + \frac{ r (k-1)}{ |x| } .
\ee
As a result, 
\be
g = v^{-4} g_{_E} = \left( 1 + \frac{ r (k-1)}{ |x| } \right)^4 \delta_{ij} \, d x^i d x^j ,
\ee
which is a spatial Schwarzschild metric with mass $ \m = 2 r ( k -1 ) $.
\end{proof}

Theorem \ref{thm-sec-conformal-ineq-1} implies a sharp bound of $\int_\Sigma | \nabla u |^2$ by the Willmore functional
of $\Sigma$, with the bound achieved by Schwarzschild spaces outside mean-convex round spheres. 

\begin{cor} \label{cor-sec-more-improved-B-M}
Let $(M, g)$ be a complete, orientable, asymptotically flat $3$-manifold with boundary $\Sigma$.
Suppose $ \Sigma$ is connected and $H_2 (M, \Sigma) = 0$.
Let $ u $ be the harmonic function such that $ u = 0 $ at $ \Sigma$ and $ u \to 1$ at $\infty$. 
If $ g$ has nonnegative scalar curvature, then 
\be \label{eq-improved-B-M} 
 \left( \frac{1}{\pi} \int_\Sigma  |  \nabla u |^2 \right)^\frac12  \le  \left( \frac{1}{16\pi}  \int_\Sigma   H^2 \right)^\frac12  + 1 .
\ee
Moreover,  equality holds if and only if 
 $(M, g)$ is isometric to a spatial Schwarzschild manifold  outside a rotationally symmetric sphere
 with nonnegative constant mean curvature. 
 \end{cor}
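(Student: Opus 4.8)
The plan is to feed the Cauchy--Schwarz inequality into Theorem \ref{thm-sec-conformal-ineq-1} and then optimize over the free parameter $k$, in the same spirit as the deduction of Corollary \ref{cor-sec-ineq} from Theorem \ref{thm-sec-ineq}. Write $z = \left( \int_\Sigma | \nabla u |^2 \right)^{\frac12}$ and $W = \frac{1}{16\pi} \int_\Sigma H^2$; since $u$ is nonconstant, $z > 0$. By Cauchy--Schwarz, $\int_\Sigma H | \nabla u | \le \left( \int_\Sigma H^2 \right)^{\frac12} z = 4 \sqrt{\pi W}\, z$. As $k > 0$, substituting this into the inequality $4\pi + k \int_\Sigma H | \nabla u | \ge k ( 4 - k ) \int_\Sigma | \nabla u |^2$ of Theorem \ref{thm-sec-conformal-ineq-1} gives
\[
4 \pi + 4 k \sqrt{\pi W}\, z \ \ge \ k ( 4 - k ) z^2 \qquad \text{for every } k > 0 .
\]

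The second step is to choose $k$ optimally. The displayed inequality is equivalent to $q(k) := z^2 k^2 - 4 z \big( z - \sqrt{\pi W}\big) k + 4 \pi \ge 0$ for all $k > 0$, where $q$ is a convex quadratic in $k$ with vertex at $k_\ast = 2 - \frac{2\sqrt{\pi W}}{z}$. If $k_\ast \le 0$, then $z \le \sqrt{\pi W}$ and $\frac{z}{\sqrt\pi} \le \sqrt{W} \le \sqrt{W} + 1$, so \eqref{eq-improved-B-M} holds at once. If $k_\ast > 0$, I would take $k = k_\ast$; then $q(k_\ast) \ge 0$ reduces to $4\pi \ge 4\big( z - \sqrt{\pi W}\big)^2$, and since $k_\ast > 0$ forces $z > \sqrt{\pi W}$, this yields $z \le \sqrt{\pi}\big( 1 + \sqrt{W}\big)$, which upon dividing by $\sqrt{\pi}$ is precisely \eqref{eq-improved-B-M}.

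For the rigidity statement I would track when every inequality above is an equality. Equality in \eqref{eq-improved-B-M} forces equality in Cauchy--Schwarz, hence $H = \lambda | \nabla u |$ on $\Sigma$ for a constant $\lambda \ge 0$, and equality in Theorem \ref{thm-sec-conformal-ineq-1} for the value $k = k_\ast$, so by that theorem $(M,g)$ is a spatial Schwarzschild manifold outside a rotationally symmetric sphere with $\m = 2 r ( k_\ast - 1 )$. Since $W \ge 0$ we automatically have $k_\ast \le 2$, which (via $\m = 2r(k_\ast-1)$, equivalently a direct evaluation $H = \frac{2(2-k_\ast)}{k_\ast^3 r} \ge 0$ on the model) means the boundary sphere lies on or outside the horizon, i.e.\ has nonnegative constant mean curvature. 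Conversely, on a Schwarzschild manifold outside a rotationally symmetric mean-convex sphere, both $H$ and $| \nabla u |$ are $SO(3)$-invariant, hence constant, so Cauchy--Schwarz is an equality; a short computation shows the parameter $k_\ast$ produced by the argument equals the value $k = 1 + \frac{\m}{2r}$ at which Theorem \ref{thm-sec-conformal-ineq-1} is an equality, so \eqref{eq-improved-B-M} is an equality.

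The quadratic optimization is routine; the step that requires the most care is the equality discussion, namely checking that the ``nonnegative mean curvature'' clause in the conclusion corresponds exactly to the automatic bound $k_\ast \le 2$ and that the converse direction lands on the same parameter $k_\ast$ for which Theorem \ref{thm-sec-conformal-ineq-1} is sharp.
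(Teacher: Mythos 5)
Your proof is correct and follows essentially the same route as the paper: both arguments treat Theorem \ref{thm-sec-conformal-ineq-1} as a one-parameter family of quadratic inequalities in $k$, combine it with Cauchy--Schwarz, and read off the bound from the resulting quadratic, with the rigidity case traced back to equality in Theorem \ref{thm-sec-conformal-ineq-1} at the optimal $k$ (your $k_\ast$ coincides with the paper's $k_0=2/(1+\sqrt{W})$ at equality). The only difference is cosmetic --- you apply Cauchy--Schwarz before optimizing in $k$ and evaluate the quadratic at its vertex, while the paper keeps $\int_\Sigma H|\nabla u|$ in the quadratic, argues via its discriminant, and applies H\"older afterward.
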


\begin{proof}
Consider the following quadratic form of $ k$,
\be
Q (k) : =  \alpha (u) \, k^2  + \beta (u) \, k  +  4 \pi  ,
\ee
where
$$ \alpha (u) =  \int_\Sigma  |  \nabla u |^2 , \ \ 
\beta (u) =  \int_\Sigma   H  |  \nabla u |  - 4 \int_\Sigma  |  \nabla u |^2  . $$ 
 We have $ Q(0) = 4 \pi $, and Theorem \ref{thm-sec-conformal-ineq-1} shows
$$ Q( k ) \ge 0, \ \forall \ k > 0 . $$
Thus, by elementary reasons, either 
\be \label{eq-discriminant}
 \beta (u)^2 - 16 \pi \alpha (u) \le 0 
\ee
or
\be \label{eq-condition-au-bu-2}
\beta (u)^2 - 16 \pi \alpha (u) > 0  \ \text{and} \ 
- \beta (u) + \sqrt{ \beta (u)^2 - 16 \pi \alpha (u)  } < 0 . 
\ee
The latter case is equivalent to
$$
\beta (u) > \sqrt{ 16 \pi \alpha (u) } ,
$$
that is 
\be \label{eq-interesting-ineq-1}
\frac{  \int_\Sigma   H  |  \nabla u |  }{ \left( \int_\Sigma  |  \nabla u |^2 \right)^\frac12  } 
 - 4  \left( \int_\Sigma  |  \nabla u |^2 \right)^\frac12  > \sqrt{ 1 6 \pi  } .
\ee
If \eqref{eq-interesting-ineq-1} holds, then, by H\"{o}lder's inequality,
\be \label{eq-interesting-ineq-2}
\left( \frac{1}{16\pi}  \int_\Sigma   H^2 \right)^\frac12  >  1  +   \left( \frac{1}{\pi}  \int_\Sigma  |  \nabla u |^2 \right)^\frac12  .
\ee
If  \eqref{eq-discriminant} holds, then 
\bee 
\left| \int_\Sigma   H  |  \nabla u |   - 4 \int_\Sigma  |  \nabla u |^2  \right| \le 4 \left(  \pi \int_\Sigma  |  \nabla u |^2  \right)^\frac12 ,
\eee
which in particular implies 
\be \label{eq-interesting-ineq-3}
4 \int_\Sigma  |  \nabla u |^2   \le 4 \left(  \pi \int_\Sigma  |  \nabla u |^2  \right)^\frac12 +  \int_\Sigma   H  |  \nabla u | .
\ee
Combined with  H\"{o}lder's inequality, this shows 
\be  \label{eq-interesting-ineq-4}
 \left( \frac{1}{\pi} \int_\Sigma  |  \nabla u |^2 \right)^\frac12  \le 1  +   \left( \frac{1}{16\pi}  \int_\Sigma   H^2 \right)^\frac12 .
\ee
Therefore, in either case, we conclude \eqref{eq-improved-B-M} holds. 

If equality in \eqref{eq-improved-B-M} holds, then \eqref{eq-condition-au-bu-2} does not hold;
\eqref{eq-interesting-ineq-3} holds with equality; and $ H = c | \nabla u | $ for some constant $ c \ge 0 $.
In particular, this gives 
$$  - \beta (u) =  4 \int_\Sigma  |  \nabla u |^2  -  \int_\Sigma   H  |  \nabla u |  = \sqrt{ 16 \pi \alpha (u) } > 0 .  $$
As a result, $ Q (k_0) = 0 $ at 
$$ k_0 = - \frac{\beta (u)}{2 \alpha (u) } = 2 \left( \frac{1}{\pi} \int_\Sigma | \nabla u |^2  \right)^{-\frac12}  
= \frac{2} { 1 + \left( \frac{1}{16 \pi} \int_{\Sigma} H^2  \right)^\frac12 }  > 0 . $$
By Theorem \ref{thm-sec-conformal-ineq-1}, 
$(M, g)$ is isometric to a spatial Schwarzschild manifold
$$ (M, g) = \left( \R^3 \setminus \{ | x | < r \} , \left( 1 + \frac{\m}{ 2 | x |} \right)^4 g_{_E} \right) ,$$
where $ r > 0 $ and $ \m = 2 r ( k_0 - 1 ) $. 
As $k_0 \le 2$, the boundary $\{ |x| = r \} $ has nonnegative mean curvature in $(M, g)$.

On such an $(M, g)$, direct calculation shows 
$$  \left( \frac{1}{16 \pi} \int_\Sigma H^2 \right)^\frac12  = \left| \frac{2}{k} - 1 \right| 
\ \ \text{and} \ \ \left( \frac{1}{ \pi} \int_{\Sigma} | \nabla u |^2 \right)^\frac12 = \frac{2}{k} . 
$$
As $ k \le 2$, equality holds in \eqref{eq-improved-B-M}.
This completes the proof.
\end{proof}

An immediate application of Corollary  \ref{cor-sec-more-improved-B-M} yields a result of 
Bray and the author \cite{BrayMiao08} on the estimate of the capacity-to-area-radius ratio. 

\begin{thm}[\cite{BrayMiao08}] \label{thm-BM07}
Let $(M, g)$ be a complete, orientable, asymptotically flat $3$-manifold with boundary $\Sigma$.
Suppose $ \Sigma$ is connected and $H_2 (M, \Sigma) = 0$. 
If $g$ has nonnegative scalar curvature, then
\be \label{eq-Bray-Miao-06} 
 \frac{  2  \c_{_\Sigma} } {  r_{_\Sigma}  }  \le  \left( \frac{1}{16\pi}  \int_\Sigma   H^2 \right)^\frac12  + 1 .
\ee
Here $\c_{_\Sigma}$ is the capacity of $\Sigma$ in $(M, g)$ and 
$ r_{_\Sigma} = \left( \frac{ | \Sigma | }{4 \pi}  \right)^\frac12 $ is the area-radius of $\Sigma$.
Moreover, equality holds if and only if 
$(M, g)$ is isometric to a spatial Schwarzschild manifold outside a rotationally symmetric sphere 
with nonnegative constant mean curvature. 
 \end{thm}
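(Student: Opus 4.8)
The plan is to obtain Theorem \ref{thm-BM07} as a direct consequence of Corollary \ref{cor-sec-more-improved-B-M} together with an elementary application of H\"older's inequality relating $\c_{_\Sigma}$, $r_{_\Sigma}$, and $\int_\Sigma |\nabla u|^2$.

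First I would record the two basic identities for the capacitary function $u$: namely $\int_\Sigma |\nabla u| = 4\pi \c_{_\Sigma}$ (this is the definition of capacity recalled in the introduction) and $|\Sigma| = 4\pi r_{_\Sigma}^2$ (the definition of the area-radius). Then H\"older's inequality on $\Sigma$ gives
$$ 4\pi \c_{_\Sigma} = \int_\Sigma |\nabla u| \le \left( \int_\Sigma |\nabla u|^2 \right)^{\frac12} |\Sigma|^{\frac12} = 2\sqrt{\pi}\, r_{_\Sigma} \left( \int_\Sigma |\nabla u|^2 \right)^{\frac12}, $$
which rearranges to $ \displaystyle \frac{2\c_{_\Sigma}}{r_{_\Sigma}} \le \left( \frac{1}{\pi} \int_\Sigma |\nabla u|^2 \right)^{\frac12} $.

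Next I would invoke Corollary \ref{cor-sec-more-improved-B-M}, which under exactly the hypotheses of the theorem asserts $ \left( \frac{1}{\pi}\int_\Sigma |\nabla u|^2 \right)^{\frac12} \le \left( \frac{1}{16\pi}\int_\Sigma H^2 \right)^{\frac12} + 1 $. Chaining the two inequalities yields \eqref{eq-Bray-Miao-06}. For the rigidity statement, equality in \eqref{eq-Bray-Miao-06} forces equality in both steps: equality in H\"older forces $|\nabla u|$ to be constant on $\Sigma$, and equality in Corollary \ref{cor-sec-more-improved-B-M} forces $(M,g)$ to be isometric to a spatial Schwarzschild manifold outside a rotationally symmetric sphere with nonnegative constant mean curvature. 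Conversely, on such a Schwarzschild exterior the capacitary function $u = 1 - r\,|x|^{-1}$ (in the conformal $\bar g$-picture of the proof of Corollary \ref{cor-sec-more-improved-B-M}, or directly) has $|\nabla u|$ constant on the boundary sphere by rotational symmetry, so H\"older is an equality there, and the computation already carried out in the proof of Corollary \ref{cor-sec-more-improved-B-M} shows the second inequality is an equality too; hence \eqref{eq-Bray-Miao-06} is an equality on these models. This settles both directions.

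There is essentially no obstacle here: the only mild point to check carefully is the equality discussion, namely that the constancy of $|\nabla u|$ on $\Sigma$ (from H\"older) is genuinely realized on the Schwarzschild exteriors singled out by Corollary \ref{cor-sec-more-improved-B-M}, so that the two equality conditions are mutually consistent rather than overdetermined. Since the boundary sphere is a rotationally symmetric (hence umbilic, constant-$|\nabla u|$) coordinate sphere in that model, this is immediate, and the characterization of the equality case matches that of Corollary \ref{cor-sec-more-improved-B-M} verbatim.
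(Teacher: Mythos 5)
Your proposal is correct and follows essentially the same route as the paper: the paper's proof likewise combines the Cauchy--Schwarz/H\"older bound $\left( \int_\Sigma |\nabla u|^2 \right)^{1/2} \ge |\Sigma|^{-1/2}\int_\Sigma |\nabla u| = \sqrt{\pi}\, \cdot 2\c_{_\Sigma} r_{_\Sigma}^{-1}$ with Corollary \ref{cor-sec-more-improved-B-M}. Your additional care in checking that the two equality conditions are simultaneously realized on the Schwarzschild models is a sensible elaboration of the rigidity discussion the paper leaves implicit.
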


\begin{proof}
This follows directly from  
$$ 
 \left( \int_\Sigma  |  \nabla u |^2 \right)^\frac12 \ge \frac{ \int_{\Sigma} | \nabla u | } { | \Sigma |^\frac12 } 
 =   \sqrt{\pi } \frac{ 2  \c_{_\Sigma} }{  r_{_\Sigma} } 
$$
and Corollary \ref{cor-sec-more-improved-B-M}.
\end{proof}

Next, we proceed to find implications of \eqref{eq-bdry-ineq-m-b} in Theorem \ref{thm-sec-ineq-2}.

\begin{thm} \label{thm-sec-more-ineq-1}
Let $(M, g)$ be a complete, orientable, asymptotically flat $3$-manifold with boundary $\Sigma$.
Suppose $ \Sigma$ is connected and $H_2 (M, \Sigma) = 0$.
Let $ u $ be the harmonic function such that $ u = 0 $ at $ \Sigma$ and $ u \to 1$ at $\infty$. 
If $ g$ has nonnegative scalar curvature, then
\be \label{eq-conformal-ineq-m-b-main}
\begin{split}
 \frac{ \m} { 2  \c_{_\Sigma} }   \ge 1  -  \left(  \frac{1}{4 \pi}  \int_{\Sigma} | \nabla u |^2 \, d \sigma \right)^\frac12  .
\end{split}
\ee
Moreover, equality holds if and only if 
$(M, g)$ is isometric to a spatial Schwarzschild manifold outside a rotationally symmetric sphere.
\end{thm}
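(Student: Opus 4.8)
The plan is to deduce \eqref{eq-conformal-ineq-m-b-main} from the boundary inequality \eqref{eq-bdry-ineq-m-b} of Theorem \ref{thm-sec-ineq-2} (ii) by applying it not to $(M,g,u)$ itself, but to a one-parameter family of conformally deformed triples $(M,\bar g,\bar u)$ in the spirit of the proof of Theorem \ref{thm-sec-conformal-ineq-1}, and then optimizing the resulting family of estimates over the parameter.

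Concretely, for a constant $k>0$ I would take $v = u + \tfrac1k(1-u)$, a positive harmonic function on $(M,g)$ with $v\to 1$ at $\infty$ and $v\equiv\tfrac1k$ on $\Sigma$, and pass to $\bar g := v^4 g$, $\bar u := v^{-1}u$. Using the conformal dictionary assembled at the start of Section \ref{sec-promoting-ineq}: the expansion $v = 1-\tfrac{k-1}{k}\c_{_\Sigma}|x|^{-1}+o(|x|^{-1})$ gives $\c_v = \tfrac{k-1}{k}\c_{_\Sigma}$, whence $\bar\c_{_\Sigma}=\c_{_\Sigma}-\c_v=\tfrac1k\c_{_\Sigma}$ and $\bar\m=\m-2\c_v=\m-\tfrac{2(k-1)}{k}\c_{_\Sigma}$; and $v\equiv\tfrac1k$ on $\Sigma$, together with \eqref{eq-conformal-bdry-relation-1} and \eqref{eq-conformal-bdry-relation-l}, gives $\int_\Sigma|\bar\nabla\bar u|_{\bar g}^2\,d\bar\sigma = k^2\int_\Sigma|\nabla u|^2\,d\sigma$. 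Substituting these into \eqref{eq-bdry-ineq-m-b} applied to $(M,\bar g,\bar u)$ and clearing denominators yields, for every $k>0$,
\[
\frac{\m}{\c_{_\Sigma}}\ \ge\ 2-\frac1k-k\,\alpha,\qquad \alpha:=\frac{1}{4\pi}\int_\Sigma|\nabla u|^2\,d\sigma .
\]
Since $u$ is nonconstant and vanishes on $\Sigma$, the Hopf boundary point lemma gives $|\nabla u|>0$ on $\Sigma$, so $\alpha>0$ and the right-hand side is maximized at $k_0=\alpha^{-1/2}$, with maximal value $2-2\sqrt\alpha$. This is precisely $\tfrac{\m}{2\c_{_\Sigma}}\ge 1-\sqrt\alpha$, i.e. \eqref{eq-conformal-ineq-m-b-main}.

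For the rigidity statement, equality in \eqref{eq-conformal-ineq-m-b-main} forces equality in \eqref{eq-bdry-ineq-m-b} for the deformed triple at $k=k_0$, so by the rigidity clause of Theorem \ref{thm-sec-ineq-2} (ii) the manifold $(M,\bar g)$ is isometric to $\R^3$ minus a round ball $B_r$ and $\bar u=1-r|x|^{-1}$. Solving $\bar u=v^{-1}u$ with $v=u+\tfrac1{k_0}(1-u)$ exactly as in the proof of Theorem \ref{thm-sec-conformal-ineq-1} gives $v^{-1}=1+(k_0-1)r|x|^{-1}$, hence $g=v^{-4}g_{_E}=\bigl(1+(k_0-1)r|x|^{-1}\bigr)^4 g_{_E}$ on $\R^3\setminus B_r$: a spatial Schwarzschild exterior of mass $\m=2r(k_0-1)$ outside the rotationally symmetric sphere $\{|x|=r\}$. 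Conversely, a direct computation on such an exterior (valid for every $r>0$ and every $k_0>0$, with no sign restriction on $k_0-1$) shows $\c_{_\Sigma}=k_0r$ and $\alpha=k_0^{-2}$, so both sides of \eqref{eq-conformal-ineq-m-b-main} equal $1-k_0^{-1}$. The step I expect to require the most care is this equality analysis: one must confirm that the value of $k$ saturating the one-parameter optimization is exactly the $k_0$ for which the underlying inequality \eqref{eq-bdry-ineq-m-b} is itself an equality, and one should note that here—in contrast to Corollary \ref{cor-sec-more-improved-B-M}—no constraint such as $k_0\le 2$ emerges, so the extremal Schwarzschild sphere need not be mean-convex; the metric $\bigl(1+(k_0-1)r|x|^{-1}\bigr)^4 g_{_E}$ on $\{|x|\ge r\}$ is a bona fide asymptotically flat, scalar-flat manifold for every $k_0>0$.
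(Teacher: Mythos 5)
Your proposal is correct and follows essentially the same route as the paper: apply \eqref{eq-bdry-ineq-m-b} to the conformal triple $(M,v^4g,v^{-1}u)$ with $v=u+\tfrac1k(1-u)$, optimize the resulting one-parameter family of bounds at $k_0=\alpha^{-1/2}$, and trace equality back through Theorem \ref{thm-sec-ineq-2} to identify $(M,\bar g)$ with a flat exterior and hence $(M,g)$ with a Schwarzschild exterior of mass $2r(k_0-1)$. Your observations that $\alpha>0$ by the Hopf lemma and that no mean-convexity constraint on the extremal sphere arises here are both consistent with the paper's statement and computations.
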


\begin{proof}
Given any positive harmonic function $ v  $ on $(M, g)$, let $ \bar g = v^4 g $ and $ \bar u = v^{-1} u $.
Applying \eqref{eq-bdry-ineq-m-b} in Theorem \ref{thm-sec-ineq-2} to $(M, \bar g, \bar u )$, we have
\be \label{eq-conformal-bdry-ineq-2}
4 \pi - \int_{\Sigma} | \bar \nabla \bar u |^2_{\bar g} \, d \bar \sigma \le 4 \pi  \bar \m { \bar \c_{_\Sigma} }^{-1} .
\ee
By \eqref{eq-conformal-bdry-relation-1} -- \eqref{eq-conformal-bdry-relation-l}, \eqref{eq-conformal-bdry-ineq-2} becomes 
\be \label{eq-conformal-ineq-m-b}
4 \pi - \int_{\Sigma}  v^{-2} | \nabla u |^2 \, d \sigma  \le 4 \pi  \frac{ \m - 2 \c_v} { \c_{_\Sigma} - \c_v } .
\ee

Given any constant $ k > 0 $, choose  
\be \label{eq-v-and-u}
 v = u + \frac{1}{k} ( 1 - u ) . 
\ee
Then $ v = k^{-1} $ at $ \Sigma$, $ \c_{v} = (1 - k^{-1} ) \c_{_\Sigma} $, 
and \eqref{eq-conformal-ineq-m-b} shows
\be \label{eq-conformal-ineq-m-b-1}
\begin{split}
 \frac{ \m} {  \c_{_\Sigma} }   \ge 2  -  \frac{1}{k}  -   \frac{k}{4 \pi}  \int_{\Sigma} | \nabla u |^2 \, d \sigma .
\end{split}
\ee
Maximizing the right side of \eqref{eq-conformal-ineq-m-b-1} over all $ k > 0 $, we have
\be \label{eq-conformal-ineq-m-b-2}
\begin{split}
 \frac{ \m} { 2  \c_{_\Sigma} }   \ge 1  -  \left(  \frac{1}{4 \pi}  \int_{\Sigma} | \nabla u |^2 \, d \sigma \right)^\frac12  ,
\end{split}
\ee
which proves \eqref{eq-conformal-ineq-m-b-main}.

If equality in \eqref{eq-conformal-ineq-m-b-main} holds, then 
equality in \eqref{eq-conformal-bdry-ineq-2} holds
for $ v = u + k^{-1} ( 1 - u ) $ with the constant $k $ given by 
\be
 k = \left(  \frac{1}{4 \pi}  \int_{\Sigma} | \nabla u |^2 \, d \sigma \right)^{ - \frac12 } .    
\ee
By Theorem \ref{thm-sec-ineq-2},  $(M, \bar g)$ is isometric 
to $ \left( \R^3 \setminus B_r , g_{_E} \right)$, where $B_r = \{ x \in \R^3 \ | \ |x| < r \}$ for some $ r > 0$,  
and
\be
\bar u = 1 - \frac{r}{ | x| } .
\ee
This combined with $\bar u = v^{-1} u $ and \eqref{eq-v-and-u} shows
\be
v^{-1} = 1 + \frac{ r (k-1)}{ |x| } .
\ee
As a result, 
\be
g = v^{-4} g_{_E} = \left( 1 + \frac{ r (k-1)}{ |x| } \right)^4 \delta_{ij} \, d x^i d x^j ,
\ee
which is a spatial Schwarzschild metric with the mass $\m = 2 r ( k -1 )$.

On any such an $(M, g)$, direct calculation shows 
$$  \frac{\m}{2 \c_{_\Sigma} }   = 1 - \frac{1}{k} 
\ \ \text{and} \ \ \left( \frac{1}{ 4 \pi} \int_{\Sigma} | \nabla u |^2 \right)^\frac12 = \frac{1}{k} ,
$$
which verifies equality in \eqref{eq-conformal-ineq-m-b-main}. This completes the proof.
\end{proof}

We now have a succinct lower bound of the mass-to-capacity ratio by the Willmore functional.

\begin{thm} \label{thm-sec-more-ineq-3}
Let $(M, g)$ be a complete, orientable, asymptotically flat $3$-manifold with boundary $\Sigma$.
Suppose $ \Sigma$ is connected and $H_2 (M, \Sigma) = 0$.
If $ g$ has nonnegative scalar curvature, then
\be \label{eq-conformal-ineq-m-b-main-3}
\begin{split}
 \frac{ \m} {  \c_{_\Sigma} }   \ge 1  -  \left( \frac{1}{16\pi}  \int_\Sigma   H^2 \right)^\frac12  .
\end{split}
\ee
Moreover, equality holds if and only if 
$(M, g)$ is isometric to a spatial Schwarzschild manifold outside a rotationally symmetric sphere with
nonnegative constant mean curvature.
\end{thm}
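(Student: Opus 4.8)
The plan is to deduce \eqref{eq-conformal-ineq-m-b-main-3} as a purely algebraic consequence of two facts already in hand: Theorem~\ref{thm-sec-more-ineq-1} and Corollary~\ref{cor-sec-more-improved-B-M}, both applied to the capacitary function $u$ on $(M,g)$, i.e. the harmonic function with $u=0$ on $\Sigma$ and $u\to 1$ at $\infty$. No new analysis is needed.

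Concretely, I would set $P := \left(\frac{1}{4\pi}\int_{\Sigma}|\nabla u|^2\,d\sigma\right)^{1/2}$ and $W := \frac{1}{16\pi}\int_{\Sigma}H^2$. Theorem~\ref{thm-sec-more-ineq-1} gives $\frac{\m}{2\c_{_\Sigma}}\ge 1-P$, hence $\frac{\m}{\c_{_\Sigma}}\ge 2-2P$. Since $\left(\frac{1}{\pi}\int_{\Sigma}|\nabla u|^2\right)^{1/2}=2P$, Corollary~\ref{cor-sec-more-improved-B-M} reads $2P\le \sqrt{W}+1$. Chaining these two inequalities yields $\frac{\m}{\c_{_\Sigma}}\ge 2-2P\ge 2-(\sqrt{W}+1)=1-\sqrt{W}$, which is exactly \eqref{eq-conformal-ineq-m-b-main-3}.

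For the rigidity case I would argue that equality in \eqref{eq-conformal-ineq-m-b-main-3} forces equality in both steps of the chain, hence equality in Theorem~\ref{thm-sec-more-ineq-1} and in Corollary~\ref{cor-sec-more-improved-B-M} simultaneously; their respective rigidity statements together say precisely that $(M,g)$ is isometric to a spatial Schwarzschild manifold outside a rotationally symmetric sphere with nonnegative (constant) mean curvature. For the converse, on $(M,g)=\left(\R^3\setminus\{|x|<r\},\left(1+\frac{\m}{2|x|}\right)^4 g_{_E}\right)$ with the boundary sphere $\{|x|=r\}$ of nonnegative mean curvature, I would reuse the explicit computations already carried out in the proofs of those two results — writing $\m=2r(k-1)$ with $0<k\le 2$ — to obtain $\frac{\m}{\c_{_\Sigma}}=2-\frac{2}{k}$ and $\sqrt{W}=\frac{2}{k}-1$, so both sides of \eqref{eq-conformal-ineq-m-b-main-3} equal $2-\frac{2}{k}$.

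I do not expect any genuine obstacle: the analytic content has been front-loaded into Theorem~\ref{thm-sec-more-ineq-1} and Corollary~\ref{cor-sec-more-improved-B-M}, and the rest is elementary. The only point deserving a little care is the equality discussion, namely verifying that the two rigidity hypotheses are jointly realizable and that, taken together, they cut out exactly the Schwarzschild exterior with nonnegative constant boundary mean curvature, together with the short model computation establishing the converse.
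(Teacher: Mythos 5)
Your proposal is correct and follows exactly the paper's own argument: the paper proves Theorem \ref{thm-sec-more-ineq-3} precisely by chaining Theorem \ref{thm-sec-more-ineq-1} with Corollary \ref{cor-sec-more-improved-B-M}, with the rigidity read off from the equality cases of those two results. Your algebra and your model computation ($\m = 2r(k-1)$, $0 < k \le 2$) check out.
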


\begin{proof}
This is a direct consequence of Corollary \ref{cor-sec-more-improved-B-M} and Theorem \ref{thm-sec-more-ineq-1}.
\end{proof}

We give a few remarks.

\begin{rem} \label{rem-one-more-proof-pmt}
Theorem \ref{thm-sec-more-ineq-3} gives another way to see the $3$-dimensional positive mass theorem. 
In the context of Proof \RomanNum{3} in Section \ref{sec-pmt}, Theorem \ref{thm-sec-more-ineq-3} gives 
$$  \frac{ \m} {  \c_{r} }   \ge 1  -  \left( \frac{1}{16\pi}  \int_{\Sigma_r}   H^2 \right)^\frac12 = o (1), \ \text{as} \ r \to 0 , $$
where $c_{r}$ is the capacity of a small geodesic ball of radius $r$. 
Hence, $ \m \ge 0 $.
\end{rem}

\begin{rem}
Theorem \ref{thm-sec-more-ineq-3} improves the result of Bray and the author in \cite{BrayMiao08}. 
Under an additional assumption of $ \int_{\Sigma} H^2 \le 16 \pi $, 
in \cite{BrayMiao08} the capacity estimate \eqref{eq-Bray-Miao-06} 
was converted into a Hawking mass estimate
\bee
\m_{_H} (\Sigma) \ge \left[ 1  -  \left( \frac{1}{16\pi}  \int_\Sigma   H^2 \right)^\frac12 \right]  \c_{_\Sigma} 
\eee
and the relation 
$\m \ge \m_{_H} (\Sigma)$
was applied (if $\Sigma$ is outer-minimizing) to obtain \eqref{eq-conformal-ineq-m-b-main-3}.

In the current derivation of \eqref{eq-conformal-ineq-m-b-main-3}, 
we bound the ratio  
$  \m \c_{_\Sigma}^{-1} $ 
via $ \int_{\Sigma} | \nabla u |^2 $ 
and bound  $\int_{\Sigma} | \nabla u |^2 $ via $\int_\Sigma H^2$, 
hence bypassing 
the use of $\m_{_H} (\Sigma)$ in relating $\m$ and $ \c_{_\Sigma}$.
\end{rem}

\begin{rem}
One may re-write \eqref{eq-conformal-ineq-m-b-main-3} as  
\bee
\begin{split}
 \mathfrak{M} (g) : = \frac{ \m} {  \c_{_\Sigma} }  +  \left( \frac{1}{16\pi}  \int_\Sigma   H^2 \right)^\frac12  - 1 \ge 0   .
\end{split}
\eee
This gives  a nonnegative quantity $\mathfrak{M} (g)$ 
on asymptotically flat $3$-manifolds $(M, g)$ with boundary
(under the curvature and topological assumptions).
$\mathcal{M} (g)$ vanishes precisely if $(M,g)$ is rotationally symmetric
with mean-convex boundary. 
\end{rem}

\section{Manifolds with the mass-to-capacity ratio $ \le 1$} \label{sec-m-2-c-less-1}

In this section, prompted by Theorem \ref{thm-sec-more-ineq-3}, we consider a class of manifolds satisfying 
a mass-capacity relation
\be \label{eq-m-c-ratio-relation}
  \m \c_{_\Sigma}^{-1} \le 1 . 
\ee
As we will see later in Proposition \ref{prop-static}, such a class of manifolds includes static metric extensions 
in the context of the Bartnik quasi-local mass \cite{Bartnik89}.

\begin{thm} \label{thm-sec-m-c-l-1}
Let $(M, g)$ be a complete, orientable, asymptotically flat $3$-manifold with boundary $\Sigma$, satisfying a mass-capacity relation 
\bee \label{eq-m-c-ratio-relation-thm}
  \m \c_{_\Sigma}^{-1} \le 1 . 
\eee
Let $ u$ be the harmonic function with $ u = 0 $ at $ \Sigma$ and $ u \to 1 $ near $\infty$.
If $ \Sigma$ is connected,  $H_2 (M, \Sigma) = 0$, and $g$ has nonnegative scalar curvature, then 
\be \label{eq-m-quadratic}
\frac{1}{ 4 \pi}   \int_{\Sigma } H | \nabla u | 
 \ge  \left( 2 - \m  \c_{_{\Sigma}}^{-1}  \right) \left( 1 - \m  \c_{_{\Sigma}}^{-1}   \right) . 
\ee 
Moreover, equality holds if and only if 
$(M, g)$ is isometric to a spatial Schwarzschild manifold outside a rotationally symmetric sphere with
nonnegative constant mean curvature.
\end{thm}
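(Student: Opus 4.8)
The plan is to obtain \eqref{eq-m-quadratic} purely algebraically, by feeding the sharp estimate of Theorem \ref{thm-sec-more-ineq-1} into the one-parameter family of Theorem \ref{thm-sec-conformal-ineq-1} at the value of the conformal parameter that makes both sharp. Write $\mu = \m\,\c_{_\Sigma}^{-1}$, $a = \frac{1}{4\pi}\int_\Sigma H|\nabla u|$ and $b = \frac{1}{4\pi}\int_\Sigma |\nabla u|^2$; the standing hypothesis $\mu \le 1$ will enter only through $2-\mu \ge 1 > 0$.

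First I would record a lower bound for $b$: Theorem \ref{thm-sec-more-ineq-1} reads $b^{1/2} \ge 1 - \tfrac{\mu}{2} = \tfrac{2-\mu}{2} \ge \tfrac12$, so squaring gives $b \ge \tfrac14(2-\mu)^2$. Next I would apply Theorem \ref{thm-sec-conformal-ineq-1} with the choice $k = \tfrac{2}{2-\mu}$, which is admissible ($k>0$) and satisfies $k \le 2 < 4$ because $2-\mu \ge 1$. Dividing \eqref{eq-sec-conformal-ineq-1} by $4\pi k$ rewrites it as $a \ge (4-k)\,b - \tfrac1k$, and since $4-k = \tfrac{6-4\mu}{2-\mu} > 0$ the lower bound $b \ge \tfrac14(2-\mu)^2$ may be substituted without reversing the inequality. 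A one-line simplification then turns $(4-k)\tfrac{(2-\mu)^2}{4} - \tfrac1k$ into $(2-\mu)(1-\mu)$, which is \eqref{eq-m-quadratic} after multiplying by $4\pi$. The choice $k = \tfrac{2}{2-\mu}$ is not ad hoc: it is the maximizer of $k \mapsto (4-k)\tfrac{(2-\mu)^2}{4} - \tfrac1k$, and it is precisely the conformal parameter at which Theorem \ref{thm-sec-more-ineq-1} becomes an equality, which is why the two ingredients dovetail; identifying this common optimizer is the one genuinely non-routine point.

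For rigidity I would observe that, since $4-k > 0$, equality in \eqref{eq-m-quadratic} forces equality both in Theorem \ref{thm-sec-more-ineq-1} and in \eqref{eq-sec-conformal-ineq-1} at $k = \tfrac{2}{2-\mu}$. The rigidity clauses of those theorems each identify $(M,g)$ with a spatial Schwarzschild manifold $\big(\R^3\setminus\{|x|<r\},\,(1+\tfrac{\m}{2|x|})^4 g_{_E}\big)$ outside a rotationally symmetric sphere, with $\m = 2r(k-1)$; because $k = \tfrac{2}{2-\mu}\le 2$ under the hypothesis $\mu\le 1$, the boundary sphere has nonnegative constant mean curvature, exactly as in the proof of Corollary \ref{cor-sec-more-improved-B-M}. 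Conversely, on such a manifold a direct computation with $u = \tfrac{|x|-r}{|x|+\m/2}$ and $\c_{_\Sigma} = r + \tfrac{\m}{2}$ gives $\tfrac{1}{4\pi}\int_\Sigma H|\nabla u| = \tfrac{4r(2r-\m)}{(2r+\m)^2} = (2-\mu)(1-\mu)$, so equality holds. I do not expect any serious obstacle beyond spotting the right $k$: once the quadratic term $(\m\c_{_\Sigma}^{-1})^2$ is seen to be exactly what optimizing the conformal parameter produces, everything else is elementary.
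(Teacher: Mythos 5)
Your proof is correct, and it is a genuinely different route from the paper's. The paper proves \eqref{eq-m-quadratic} by applying the Willmore bound of Theorem \ref{thm-sec-more-ineq-3} to the exterior of \emph{every} level set $\Sigma_t$ (using $\c_{_{\Sigma_t}}=\c_{_\Sigma}/(1-t)$, with the hypothesis $\m\c_{_\Sigma}^{-1}\le 1$ ensuring the right-hand side $1-\m\c_{_\Sigma}^{-1}(1-t)$ is nonnegative before squaring), and then integrating $\tfrac34\int_{\Sigma_t}H^2$ in $t$ against the Stern-type identity \eqref{eq-bdry-3}; the quadratic term $(\m\c_{_\Sigma}^{-1})^2$ emerges from $\int_0^1[1-\m\c_{_\Sigma}^{-1}(1-t)]^2\,dt$. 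You instead stay entirely on the boundary $\Sigma$ and combine Theorem \ref{thm-sec-conformal-ineq-1} at $k=\tfrac{2}{2-\mu}$ with the lower bound $b\ge\tfrac14(2-\mu)^2$ from Theorem \ref{thm-sec-more-ineq-1}; I checked the algebra ($(4-k)\tfrac{(2-\mu)^2}{4}-\tfrac1k=(2-\mu)(1-\mu)$, with $0<k\le2<4$ and $4-k>0$ under $\mu\le1$ justifying the substitution), the identification of $k$ as the common optimizer, and the rigidity chain, and all of it holds; the converse computation on Schwarzschild also checks out via $H_r\c_r=2k^{-1}(2k^{-1}-1)$. What your argument buys is brevity and the fact that only boundary data enter; what the paper's buys is the intermediate family of inequalities \eqref{eq-ratio-bdry-6} at every level $t_1$ (which your method would only recover by rerunning the argument on each exterior region) and a clearer geometric source for the quadratic term. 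There is no circularity in your use of Theorems \ref{thm-sec-conformal-ineq-1} and \ref{thm-sec-more-ineq-1}, since both are established before and independently of Theorem \ref{thm-sec-m-c-l-1}.
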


\begin{proof}
For a regular value $t \in [0,1)$, if $ u^{(t)}$ denotes the harmonic function outside $\Sigma_t$ with 
$u^{(t)} = 0 $ at $ \Sigma_t$ and $ u^{(t)} \to 1 $ at $\infty$, then
\be
u^{(t)} = \frac{ u - t}{1 - t}.
\ee
As a result, the capacity $\c_{_{\Sigma_t} } $ of $ \Sigma_t$ is related to that of $ \Sigma $ by
\be \label{eq-capacity-St-and-S}
\c_{_{\Sigma_t}} = \frac{ \c_{_\Sigma} }{ 1 - t }.
\ee
Therefore, by Theorem \ref{thm-sec-more-ineq-3}, 
\be \label{eq-Willmore-capacity-1}
\begin{split}
\left( \frac{1}{16\pi}  \int_{\Sigma_t}   H^2 \right)^\frac12  \ge & \  1 -  \m  \c_{_{\Sigma_t}}^{-1} \\
= & \  1 -  \m  \c_{_{\Sigma}}^{-1} (1 - t )  . 
\end{split} 
\ee
Under the assumption $ \m \c_{_\Sigma}^{-1} \le 1 $, we have 
$$ 1 -  \m  \c_{_{\Sigma}}^{-1} (1 - t ) \ge 0 . $$ 
Hence, \eqref{eq-Willmore-capacity-1} is equivalent to 
\be \label{eq-Willmore-capacity-2}
\begin{split}
 \frac{1}{16\pi}  \int_{\Sigma_t}   H^2   \ge \left[  1 -  \m  \c_{_{\Sigma}}^{-1} (1 - t )  \right]^2.
\end{split} 
\ee

To proceed, we return to the basic identity \eqref{eq-bdry-3} in Section \ref{sec-ineq}.
Given any regular values $ t_1 < t_2 < 1$, by \eqref{eq-bdry-3}, 
\be \label{eq-ratio-bdry-3}
\begin{split}
& \  4 \pi ( t_2 - t_1) + \int_{\Sigma_{t_1} } H | \nabla u |  -  \int_{\Sigma_{t_2} } H | \nabla u |   \\
\ge  & \   \int_{t_1}^{t_2}   \int_{\Sigma_t}  \frac{1}{2}  | \mathring{\Pi} |^2 +  | \nabla u |^{-2}  | \nabla_{_{\Sigma_t} } | \nabla u | |^2 + \frac34 H^2  + \frac12 R  \\
\ge & \   \int_{t_1}^{t_2}   \frac34 \int_{\Sigma_t} H^2 .
\end{split}
\ee
Thus, it follows from \eqref{eq-Willmore-capacity-2} and \eqref{eq-ratio-bdry-3} that
\bee
\label{eq-ratio-bdry-4}
\begin{split}
& \  4 \pi ( t_2 - t_1) + \int_{\Sigma_{t_1} } H | \nabla u |  -  \int_{\Sigma_{t_2} } H | \nabla u |   \\
\ge & \ 12 \pi  \int_{t_1}^{t_2}   \left[  1 -  \m  \c_{_{\Sigma}}^{-1} (1 - t )  \right]^2 .
\end{split}
\eee
Letting $ t_2 \to 1$, by Lemma \ref{lem-limit-1}, we obtain 
\bee \label{eq-ratio-bdry-5}
\begin{split}
 & \ 4 \pi ( 1 - t_1) + \int_{\Sigma_{t_1} } H | \nabla u |  \\
 \ge & \ 12 \pi  \int_{t_1}^{1}   \left[  1 -  \m  \c_{_{\Sigma}}^{-1} (1 - t )  \right]^2 \\
 = & \  12 \pi (1 - t_1) -  12 \pi \m \c_{_\Sigma}^{-1}  ( 1 - t_1)^2  
 + 4 \pi \left( \m  \c_{_{\Sigma}}^{-1} \right)^2 ( 1 - t_1)^3 ,
\end{split}
\eee
or, equivalently 
\be \label{eq-ratio-bdry-6}
\begin{split}
& \  12 \pi  \m \c_{_\Sigma}^{-1}  ( 1 - t_1)  - 4 \pi  \left( \m  \c_{_{\Sigma}}^{-1} \right)^2 ( 1 - t_1)^2  \\
 \ge & \ 8 \pi  -  \frac{1}{1 - t_1}  \int_{\Sigma_{t_1} } H | \nabla u |     .
\end{split}
\ee
In particular, at $ t_1 = 0 $, we have
\bee
\begin{split}
12 \pi  \m \c_{_\Sigma}^{-1}    - 4 \pi  \left( \m  \c_{_{\Sigma}}^{-1} \right)^2  
 \ge  8 \pi  -   \int_{\Sigma } H | \nabla u |     ,
\end{split}
\eee
which proves \eqref{eq-m-quadratic}.

If equality in \eqref{eq-m-quadratic} holds, then equality in \eqref{eq-ratio-bdry-6} holds with $ t_1 = 0 $.
This necessarily implies equality in \eqref{eq-Willmore-capacity-2} holds for a.e. $ t \in [0,1]$.
As a result, at $ t = 0$, 
$$
 \frac{1}{16\pi}  \int_{\Sigma}   H^2   = \left[  1 -  \m  \c_{_{\Sigma}}^{-1}   \right]^2 .
$$
Since $ 1 - \m \c_{_\Sigma}^{-1} \ge 0 $, we conclude by Theorem \ref{thm-sec-more-ineq-3} that 
$(M, g)$ is isometric to a spatial Schwarzschild manifold outside a rotationally symmetric sphere with
nonnegative mean curvature.

Suppose $ (M, g) = \left( \R^3 \setminus \{ | x | < r \} , \left( 1 + \frac{\m}{ 2 | x |} \right)^4 g_{_E} \right) $ 
with mean-convex boundary $\{ |x| = r \}$, then 
\eqref{eq-Willmore-capacity-1} -- \eqref{eq-ratio-bdry-6} all become equality. 
Hence, equality in \eqref{eq-m-quadratic} holds. This completes the proof.
\end{proof}

\begin{rem}
We compare \eqref{eq-m-quadratic} and \eqref{eq-bdry-ineq-m-a}.
If $(M, g)$ has $\m = 0 $,  \eqref{eq-m-quadratic} is as the same as \eqref{eq-bdry-ineq-m-a},
both of which reduces to 
$$\int_{\Sigma} H | \nabla u | \ge 8 \pi  . $$
For $(M, g)$ with $\m \ne 0 $, \eqref{eq-m-quadratic} improves  \eqref{eq-bdry-ineq-m-a}
by unveiling the additional term
$$ 4 \pi ( \m \c_{_\Sigma}^{-1} )^2 . $$
\end{rem}

\begin{rem} \label{rem-good-condition}
Condition \eqref{eq-m-c-ratio-relation} is a global condition on the triple $(M, g, \Sigma)$.
It has a feature of being inheritable to other surfaces enclosing
$\Sigma$. More precisely, 
$$  \m \c_{_\Sigma}^{-1} \le 1 \ \Longrightarrow \ \m \c_{_S}^{-1} \le 1 $$ 
for any other surfaces $S$ in $M$ enclosing $ \Sigma$.
This follows from the fact $ \c_{_\Sigma} \le \c_{_S} $, which is a consequence of the variational
characterization of the surface capacity.
\end{rem}

\begin{rem}
Theorem \ref{thm-sec-m-c-l-1} shows a necessary condition of $ \m \c_{_\Sigma} \le 1 $ 
is $ \int_{\Sigma} H | \nabla u | \ge 0 $.
Therefore, by Remark \ref{rem-good-condition},
\be
\m \c_{_\Sigma}^{-1} \le 1 \ \Longrightarrow \ \int_{S} H | \nabla u_{_S} | \ge 0 ,
\ee
for any surfaces $S$  enclosing $\Sigma$. Here $ u_{_S}$ denotes the harmonic
function in the exterior of $S$, with $ u_{_S} = 0 $ at $S$ and $ u_{_S} \to 1 $ at $\infty$.
\end{rem}

Manifolds $(M, g)$ satisfying $ \m \c_{_\Sigma}^{-1} \le 1 $ include regions, in a spatial Schwarzschild manifold with positive mass, 
which are the exterior to a surface enclosing the horizon. That is,  if 
$$ (M_\m, g_\m ) =   \left( \R^3 \setminus \left\{ | x | < \frac12 \m  \right\} , \left( 1 + \frac{\m}{ 2 | x |} \right)^4 g_{_E} \right) $$
with $ \m > 0 $ and if $\Sigma \subset  M_\m $ 
is a closed surface bounding some region $D$ with the horizon 
$\Sigma_{h} = \{ | x | = \frac12 \m \}, $
then $(M_\m \setminus D, g_\m )$ satisfies
\bee
\m \c_{_\Sigma}^{-1} \le 1 . 
\eee
This is because of $ \ c_{_{\Sigma_h}} = \m  $ on $(M_\m, g_\m)$ and 
$ \c_{_{\Sigma_h} } \le \c_{_\Sigma} $. 

To put the next corollary of Theorem \ref{thm-sec-m-c-l-1} in context, 
we mention a few additional facts on $(M_\m, g_\m)$ . 
Let $ \Sigma_r = \{ | x | = r \} \subset M_\m$.
The mean curvature $H_r$ of $ \Sigma_r $ equals 
$$ H_r = k^{-2} \left( 2 k^{-1}  - 1 \right) 2 r^{-1} , $$
where $ k > 0 $ is the constant determined by 
$ \displaystyle  \m = 2 r ( k - 1 )  $.
The product $ \m H_r$ satisfies
\bee
\m H_r  = 2 k^{-1} \left( 2 k^{-1}  - 1 \right)  2 (1-k^{-1}) .
\eee
The capacity $\c_r$ of $ \Sigma_r$ is given by 
\bee
\c_r = r + \frac{\m}{2},  \ \ \text{and hence,} \ \ 
\m \c_r^{-1} = 2 ( 1 - k^{-1} ) < 2 .
\eee
Thus, $ \m H_r $ and $ \m \c_{r}^{-1}$ are related by
\be \label{eq-mH-expression-S}
\m H_r = \left( 2 - \m \c_r^{-1} \right) \left( 1 - \m \c_r^{-1} \right)  \m \c_r^{-1} . 
\ee
As a function of $r$, calculation shows
\be \label{eq-max-mH-S}
\max_{ \frac12 \m \le r < \infty} \m H_r = \frac{2}{ 3 \sqrt{ 3} } ,
\ee
where this maximum is achieved uniquely at
\be
r_p =  \left( 1 + \frac{ \sqrt{3} }{2} \right)  \m , \ \text{satisfying}  \ \left( 1 + \frac{ \m }{2 r_p} \right)^2 r_p = 3 \m . 
\ee
The sphere $ \{ |x| = r_p \}$ is often known as the photon sphere in $(M_\m, g_\m)$.
The mass-to-capacity ratio at  $\Sigma_{r_p}$ is given by
\be \label{eq-q-photon}
\m \c_{r_p}^{-1} = 1 - \frac{1}{  \sqrt{3} } . 
\ee

The following corollary gives a partial classification or comparison result for manifolds with $\m \c_{_\Sigma} \le 1 $, 
depending on the maximum of the boundary mean curvature.

\begin{cor} \label{cor-sec-m-c-l-1}
Let $(M, g)$ be a complete, orientable, asymptotically flat $3$-manifold with boundary $\Sigma$, 
with the mass-to-capacity ratio satisfying 
$$  0 <  \m \c_{_\Sigma}^{-1} \le 1 . $$
Suppose $ \Sigma$ is connected,  $H_2 (M, \Sigma) = 0$, and $g$ has nonnegative scalar curvature.
Then 
\begin{enumerate}

\vspace{.1cm}

\item[(i)] either $(M, g)$ is isometric to a spatial Schwarzschild manifold outside the horizon; 

\vspace{.1cm}

\item[(ii)]  or $ \displaystyle H_{max} =  \max_{\Sigma} H  > 0 $ and one of the following holds:

\begin{enumerate}

\vspace{.1cm}

\item $  \m H_{max} < \frac{2}{3 \sqrt{3} }  $ and 
$$ \displaystyle \c_{_\Sigma} \le \c_{r_1} \  \text{or} \ \ \c_{_\Sigma} \ge \c_{r_2}  . $$ 
Here $ \c_{r_i}$ is the capacity of the sphere $\Sigma_{r_i} = \{ |x| = r_i \}$, $ i = 1, 2$,  
in the spatial Schwarzschild manifold 
$$ (M_\m, g_\m ) =   \left( \R^3 \setminus \left\{ | x | < \frac12 \m  \right\} , \left( 1 + \frac{\m}{ 2 | x |} \right)^4 g_{_E} \right) $$
which has the same mass as $(M,g)$,  and the constants $r_1, r_2$ are chosen so that 
\be \label{eq-def-r-1-and-r-2}
\ \ \ \ \ \ H_{r_1} = H_{r_2} = H_{max} \ \ \text{and} \ \ \  \frac12 \m < r_1 < ( 1 + \frac{1 }{2}  \sqrt{3} ) \, \m < r_2   , 
\ee
where $H_{r_i}$ is the  mean curvature of $\Sigma_{r_i}$ in $(M_\m, g_\m)$. 
Moreover, $ \c_{_\Sigma} = \c_{r_i}$ for an  $r_i$ if and only if $ (M, g)$ is isometric to 
$(M_\m, g_\m)$ outside $\Sigma_{r_i}$;

\vspace{.2cm}

\item $ \displaystyle  \m H_{max} \ge \frac{2}{3 \sqrt{3} } $ and equality holds if and only 
if $(M, g)$ is isometric to the spatial Schwarzschild manifold $(M_\m , g_\m)$ outside the photon sphere
$ \left\{ |x| = ( 1 + \frac{1 }{2}  \sqrt{3} ) \, \m \right\} $.

\end{enumerate}

\end{enumerate}

\end{cor}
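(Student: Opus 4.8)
The plan is to compress the whole statement into one master inequality between $\m H_{max}$ and the mass-to-capacity ratio $q:=\m\,\c_{_\Sigma}^{-1}\in(0,1]$, and then to run a completely elementary discussion of the cubic $f(q):=q(2-q)(1-q)$ on $(0,1]$ against the corresponding behaviour of the model $(M_\m,g_\m)$. First I would record two trivial facts: $|\nabla u|>0$ on $\Sigma$ (Hopf lemma, since $u=0$ on $\Sigma$ and $u>0$ in the interior) and $\int_\Sigma|\nabla u|=4\pi\,\c_{_\Sigma}$. Then, combining the crude estimate $\int_\Sigma H|\nabla u|\le H_{max}\int_\Sigma|\nabla u|=4\pi\,H_{max}\,\c_{_\Sigma}$ with Theorem~\ref{thm-sec-m-c-l-1} (which reads $\tfrac1{4\pi}\int_\Sigma H|\nabla u|\ge(2-q)(1-q)$) and the identity $\c_{_\Sigma}=\m/q$, one obtains
\[
\m\,H_{max}\ \ge\ q(2-q)(1-q)\ =:\ f(q),\qquad q\in(0,1].
\]
Equality here is equivalent to $H\equiv H_{max}$ on $\Sigma$ together with equality in Theorem~\ref{thm-sec-m-c-l-1}; by the rigidity clause of that theorem the second condition already forces $H$ to be a nonnegative constant on $\Sigma$, so equality in the master inequality holds exactly when $(M,g)$ is a spatial Schwarzschild manifold $(M_\m,g_\m)$ outside a rotationally symmetric sphere of nonnegative constant mean curvature.

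Next I would analyze $f$: $f'(q)=3q^2-6q+2$ has a unique root $q_p=1-\tfrac1{\sqrt3}$ in $(0,1]$, $f$ increases on $(0,q_p)$ and decreases on $(q_p,1]$, with $f(1)=0$, $f(q_p)=\tfrac2{3\sqrt3}$, and $f>0$ on $(0,1)$. In parallel I would parametrize $(M_\m,g_\m)$ by the coordinate spheres $\Sigma_r$, $r\ge\tfrac\m2$: since $\c_r=r+\tfrac\m2$, the quantity $q_r:=\m\,\c_r^{-1}=\m/(r+\tfrac\m2)$ is a strictly decreasing bijection of $[\tfrac\m2,\infty)$ onto $(0,1]$ with $q_{\m/2}=1$, and \eqref{eq-mH-expression-S} says precisely $\m H_r=f(q_r)$. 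Hence $r\mapsto\m H_r$ rises on $[\tfrac\m2,r_p]$ from $0$ to $\tfrac2{3\sqrt3}$ and decreases back to $0$ on $[r_p,\infty)$, where $r_p=(1+\tfrac{\sqrt3}2)\m$ is the radius with $q_{r_p}=q_p$ --- consistent with \eqref{eq-max-mH-S}.

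Then I would split into cases. If $H_{max}<0$ the right-hand side of Theorem~\ref{thm-sec-m-c-l-1} is $\ge0$ while the left side is $<0$, which is impossible; if $H_{max}=0$ then the master inequality gives $0\ge f(q)\ge0$, so $f(q)=0$, hence $q=1$, and then $\int_\Sigma H|\nabla u|=0=4\pi(2-1)(1-1)$ is equality in Theorem~\ref{thm-sec-m-c-l-1}, which together with $H\le0$ forces $(M,g)$ to be Schwarzschild outside a round sphere of mean curvature $0$, i.e.\ outside the horizon --- alternative (i). So assume $H_{max}>0$. In case (ii)(a), $\m H_{max}<\tfrac2{3\sqrt3}$, the equation $f(q)=\m H_{max}$ has exactly two roots $q_{r_2}<q_p<q_{r_1}$ in $(0,1)$, which under $q\leftrightarrow r$ are the two radii $\tfrac\m2<r_1<r_p<r_2$ with $H_{r_1}=H_{r_2}=H_{max}$ (this is the content of \eqref{eq-def-r-1-and-r-2}); the master inequality $f(q)\le\m H_{max}$ then confines $q$ to $(0,q_{r_2}]\cup[q_{r_1},1]$, i.e.\ (using $\m>0$) $\c_{_\Sigma}\ge\c_{r_2}$ or $\c_{_\Sigma}\le\c_{r_1}$. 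For the equality clause, $\c_{_\Sigma}=\c_{r_i}$ means $q=q_{r_i}$, hence $f(q)=\m H_{max}$, hence equality in the master inequality; by the first paragraph $(M,g)$ is then Schwarzschild outside a round sphere of nonnegative constant mean curvature, and since $r\mapsto\c_r=r+\tfrac\m2$ is injective this sphere is $\Sigma_{r_i}$; the converse is a direct computation on $(M_\m,g_\m)$.

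Finally, in case (ii)(b) the bound $\m H_{max}\ge\tfrac2{3\sqrt3}$ is simply the case hypothesis, and its equality clause is the delicate point: the ``if'' direction is \eqref{eq-max-mH-S}, while for ``only if'' one pushes the case-(ii)(a) analysis to the degenerate limit $r_1,r_2\to r_p$ and invokes the rigidity of Theorem~\ref{thm-sec-m-c-l-1}, tracing equality back through the Stern-type identity \eqref{eq-bdry-3}, the conformal rescaling behind Theorem~\ref{thm-sec-more-ineq-3}, and H\"older's inequality, to pin $q$ to $q_p$ and thereby identify $(M,g)$ with $(M_\m,g_\m)$ outside the photon sphere $\Sigma_{r_p}$. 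I expect the main obstacle to be exactly these rigidity statements --- keeping track of all the equality conditions through the conformal change and through \eqref{eq-bdry-3}, and justifying the limiting argument for the photon-sphere case --- since the inequality parts are routine once the master inequality and the shape of $f$ are established.
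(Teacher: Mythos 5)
Your proposal follows the paper's proof in all essential respects: the same master inequality $\m H_{max}\ge q(2-q)(1-q)$ obtained from Theorem \ref{thm-sec-m-c-l-1} together with $\int_\Sigma|\nabla u|=4\pi\c_{_\Sigma}$, the same elementary analysis of the cubic $f(q)=q(2-q)(1-q)$ on $(0,1]$, the same dictionary $\c_r=r+\m/2$, $\m H_r=f(q_r)$ for the model $(M_\m,g_\m)$, and the same rigidity arguments in case (ii)(a). Your handling of $H_{max}=0$ is a harmless variant: you deduce $q=1$ and then equality in Theorem \ref{thm-sec-m-c-l-1}, whereas the paper passes through Theorem \ref{thm-sec-more-ineq-3} (which gives $q\ge1$ when $H\equiv0$) and uses its rigidity clause; both routes land on the Schwarzschild exterior of the horizon.

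The one genuine gap is the ``only if'' direction of the equality clause in (ii)(b), and your proposed fix does not close it. Pushing the case-(ii)(a) analysis to the degenerate limit $r_1,r_2\to r_p$ yields only ``$q\le q_p$ or $q\ge q_p$'', which is vacuous; and invoking the rigidity of Theorem \ref{thm-sec-m-c-l-1} presupposes equality in the chain $\m H_{max}\ge f(q)$, i.e.\ it presupposes $f(q)=\frac{2}{3\sqrt3}$ and hence $q=q_p$ --- exactly what you are trying to prove. The logical difficulty is that when $\m H_{max}$ equals the maximum value of $f$ over $(0,1]$, the constraint $f(q)\le\m H_{max}$ is satisfied by every $q$, so the master inequality alone cannot pin $q$ to $q_p$; one would need a reverse-type inequality or some additional input to force equality throughout. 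You should be aware that the paper's own proof is equally terse at precisely this point (it asserts that equality in $\m H_{max}\ge\frac{2}{3\sqrt3}$ forces $H_{max}\c_{_\Sigma}=\frac{1}{4\pi}\int_\Sigma H|\nabla u|=(2-q)(1-q)$ with $q=1-\frac{1}{\sqrt3}$, without supplying the missing implication), so you have not overlooked an argument that the paper actually provides; but as written neither your sketch nor a literal reading of the paper's text constitutes a complete proof of that one sub-statement. Everything else in your proposal is correct and matches the paper.
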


\begin{proof}
Let $ q = \m \c_{_\Sigma}^{-1} \in (0, 1]$.
By Theorem \ref{thm-sec-m-c-l-1}, 
\be \label{eq-mass-H}
\begin{split}
\frac{1}{ 4 \pi}   \int_{\Sigma } H | \nabla u | 
 \ge & \ \left( 2 - q \right) \left( 1 - q  \right) . 
\end{split}
\ee
In particular, $  \int_\Sigma H | \nabla u | \ge 0 $.  
As $ | \nabla u | > 0 $ along $ \Sigma$,  we  have  $ H_{max} \ge 0  $
and $ H_{max} = 0 $ if and only if $ H = 0 $. 
In the latter case, Theorem \ref{thm-sec-more-ineq-3} shows $ q \ge 1 $. 
Therefore, $ q = 1 $, and by Theorem \ref{thm-sec-more-ineq-3}, $(M, g)$ is isometric to a spatial Schwarzschild manifold with positive mass outside the horizon.

In what follows, we suppose $ H_{max} > 0 $. 
Since $ \int_{\Sigma} | \nabla u | = 4 \pi \c_{_\Sigma}$, 
\eqref{eq-mass-H} implies 
\be
H_{max} \,  \c_{_\Sigma} \ge \left( 2 - q  \right) \left( 1 - q  \right) .
\ee
As $ \m > 0 $, this gives
\be \label{eq-mass-H-m-p}
\m \, H_{max} \ge \left( 2 - q \right) \left( 1 - q \right) q .
\ee
As a result, either  
\be \label{eq-big-mH}
\m \, H_{max} \ge  \frac{2}{3 \sqrt{3} } = \max_{ x \in [0, 1] } ( 2 - x ) ( 1 - x ) x  , 
\ee
or
\be \label{eq-middle-mH}
 0 < \m \, H_{max} <   \frac{2}{3 \sqrt{3} } . 
\ee
If  \eqref{eq-big-mH} holds with equality, then 
\bee
H_{max} \c_{\Sigma} =  \frac{1}{ 4 \pi}   \int_{\Sigma } H | \nabla u | 
=  \left( 2 - q \right) \left( 1 - q  \right) ,
\eee
with 
$ q =  1 - \frac{1}{\sqrt{3} }  $.
By Theorem \ref{thm-sec-m-c-l-1} and the fact \eqref{eq-max-mH-S} -- \eqref{eq-q-photon}, $(M, g)$ is isometric to 
a spatial Schwarzschild manifold with the photon sphere boundary.

Next, we suppose \eqref{eq-middle-mH} holds. 
Let $ r_i$, $i=1,2$, be the constants given in \eqref{eq-def-r-1-and-r-2}.
It follows from \eqref{eq-mH-expression-S} and \eqref{eq-mass-H-m-p} that
\bee
\begin{split}
& \ \left( 2 - \m \c_{r_i}^{-1} \right) \left( 1 - \m \c_{r_i}^{-1} \right)  \m \c_{r_i}^{-1} \\
= & \ \m H_{r_i} = \m H_{max} \\
\ge & \ ( 2 - q) (1 - q) q .
\end{split} 
\eee
Analyzing the function $f(x) = (2 -x)(1-x)x $ and using the assumption $ 0 < q \le 1 $, we conclude
\bee
q \le \m \c_{r_2}^{-1} \ \  \text{or} \  \ q \ge \m {\c}_{r_1}^{-1}, 
\eee
or equivalently 
\be
\c_{r_2} \le \c_{_\Sigma} \ \ \text{or} \  \ \c_{r_1} \ge \c_{_\Sigma} . 
\ee
If $ \c_{_\Sigma} = \c_{r_i}$ for an $r_i$, then 
\bee
H_{max} \c_{_\Sigma} =  \frac{1}{ 4 \pi}   \int_{\Sigma } H | \nabla u | 
=  \left( 2 - q \right) \left( 1 - q  \right) ,
\eee
with 
$ q =  \m \c_{r_i}^{-1}  $.
By Theorem \ref{thm-sec-m-c-l-1}, $(M, g)$ is isometric to 
a spatial Schwarzschild manifold with boundary $\{ |x| = r_i \}$.
This completes the proof. 
\end{proof}

\begin{rem}
Corollary \ref{cor-sec-m-c-l-1} can be applied to manifolds $(M, g)$ with CMC boundary, i.e. $\Sigma$ 
has constant mean curvature.
In this case, it might be interesting to identify  
$ \sup_{(M, g)} \m H $.
\end{rem}

Next, we mention some other implications of \eqref{eq-m-c-ratio-relation}
which are  corollaries of Theorems \ref{thm-BM07} and  \ref{thm-sec-more-ineq-1}.

\begin{cor} \label{cor-sec-m-c-l-2}
Let $(M, g)$ be a complete, orientable, asymptotically flat $3$-manifold with boundary $\Sigma$, satisfying 
$$   \m \c_{_\Sigma}^{-1} \le 1 . $$  
Suppose $ \Sigma$ is connected,  $H_2 (M, \Sigma) = 0$, and $g$ has nonnegative scalar curvature. Then
\begin{enumerate}
\item[(i)] $ \displaystyle 
\m \le \frac{r_{_\Sigma}} {2} \left[ 1 + \left( \frac{1}{16\pi} \int_\Sigma H^2 \right)^\frac12 \right] ,
$
where $ r_{_\Sigma}$ is the area-radius of $ \Sigma $; and 

\item[(ii)]  
$ \displaystyle  \frac{1}{\pi} \int_{\Sigma} | \nabla u |^2 \ge 1 $,
where  $u$ is the harmonic function on $(M, g)$ with $ u = 0 $ at $ \Sigma$ and $ u \to 1 $ near $\infty$.

\end{enumerate} 
Moreover, equality holds in either inequality  if and only if 
$(M, g)$ is isometric to a spatial Schwarzschild manifold outside the horizon.
\end{cor}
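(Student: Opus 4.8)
The plan is to obtain both inequalities as short consequences of Theorems \ref{thm-BM07} and \ref{thm-sec-more-ineq-1}, invoking the hypothesis $\m\,\c_{_\Sigma}^{-1}\le 1$ only at the very end of each deduction, and then to read off the rigidity statements from the rigidity already built into those two theorems. For (i), I would start from Theorem \ref{thm-BM07}, which gives $2\,\c_{_\Sigma}\,r_{_\Sigma}^{-1}\le \left(\frac{1}{16\pi}\int_\Sigma H^2\right)^{1/2}+1$, i.e. $\c_{_\Sigma}\le \frac{r_{_\Sigma}}{2}\left[1+\left(\frac{1}{16\pi}\int_\Sigma H^2\right)^{1/2}\right]$. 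Since $\m\,\c_{_\Sigma}^{-1}\le 1$ is precisely $\m\le \c_{_\Sigma}$, composing the two inequalities yields (i).

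For (ii), I would apply Theorem \ref{thm-sec-more-ineq-1}, namely $\frac{\m}{2\,\c_{_\Sigma}}\ge 1-\left(\frac{1}{4\pi}\int_\Sigma|\nabla u|^2\right)^{1/2}$. The hypothesis gives $\frac{\m}{2\,\c_{_\Sigma}}\le\frac12$, so $1-\left(\frac{1}{4\pi}\int_\Sigma|\nabla u|^2\right)^{1/2}\le\frac12$; rearranging gives $\frac{1}{4\pi}\int_\Sigma|\nabla u|^2\ge\frac14$, that is $\frac1\pi\int_\Sigma|\nabla u|^2\ge 1$.

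For the equality cases, note first that equality in (i) forces both $\m=\c_{_\Sigma}$ (so $\m\,\c_{_\Sigma}^{-1}=1$, and in particular $\m>0$) and equality in Theorem \ref{thm-BM07}; the latter identifies $(M,g)$ with a spatial Schwarzschild manifold outside a rotationally symmetric sphere $\{|x|=r_0\}$ of nonnegative constant mean curvature, on which $\c_{_\Sigma}=r_0+\frac{\m}{2}$ by the formula recorded in Section \ref{sec-m-2-c-less-1}, so $\m=\c_{_\Sigma}$ forces $r_0=\frac{\m}{2}$, the horizon. Conversely, on the Schwarzschild exterior of the horizon one has $H\equiv 0$, $\c_{_\Sigma}=\m$, and $r_{_\Sigma}=2\m$, so (i) is an equality. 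The same bookkeeping handles (ii): equality there forces $\m\,\c_{_\Sigma}^{-1}=1$ together with equality in Theorem \ref{thm-sec-more-ineq-1}, hence Schwarzschild outside a rotationally symmetric sphere, hence (by $\m=\c_{_\Sigma}$) the horizon; conversely, on the horizon exterior the capacitary function is $u=\frac{2|x|-\m}{2|x|+\m}$, and a direct computation of $|\nabla u|$ and $|\Sigma|$ at $|x|=\frac{\m}{2}$ gives $\int_\Sigma|\nabla u|^2=\pi$, so (ii) is an equality. There is no genuine obstacle here; the only step requiring attention is precisely this compatibility check in the rigidity case, i.e. verifying that the rigidity input of Theorem \ref{thm-BM07} or \ref{thm-sec-more-ineq-1}, combined with $\m=\c_{_\Sigma}$, pins the boundary down to the horizon, which is immediate from $\c_r=r+\frac{\m}{2}$.
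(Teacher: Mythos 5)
Your proposal is correct and follows essentially the same route as the paper: (i) comes from chaining $\m\le\c_{_\Sigma}$ with the capacity bound of Theorem \ref{thm-BM07}, (ii) from combining $\m\,(2\c_{_\Sigma})^{-1}\le\frac12$ with Theorem \ref{thm-sec-more-ineq-1}, and the rigidity from the equality cases of those theorems together with the forced identity $\m=\c_{_\Sigma}$, which via $\c_r=r+\frac{\m}{2}$ pins the boundary to the horizon. The paper states this more tersely but the argument is the same; your explicit verification of the converse (e.g. $\int_\Sigma|\nabla u|^2=\pi$ on the horizon exterior) is a correct elaboration of what the paper leaves implicit.
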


\begin{proof} 
Inequalities in (i),  (ii) follow from  \eqref{eq-Bray-Miao-06} in Theorem \ref{thm-BM07}, 
 \eqref{eq-conformal-ineq-m-b-main} in  Theorem \ref{thm-sec-more-ineq-1}, respectively. 
The rigidity part follows from the rigidity conclusion in
Theorem \ref{thm-BM07} or  Theorem \ref{thm-sec-more-ineq-1}, together with 
the extra information $\m = \c_{_\Sigma}$.
\end{proof}

\begin{rem} \label{rem-long-cylinder}
Heuristically, (ii) of Corollary \ref{cor-sec-m-c-l-2} suggests 
the condition 
$$ \m \c_{_\Sigma}^{-1} \le 1 $$ 
may rule out manifolds having  long cylindrical neighborhoods shielding  the boundary. 
The following is a simple example. Suppose $ \Sigma$ is a sphere or a torus and $\gamma$ is a metric of nonnegative Gauss curvature on $\Sigma$. Given a constant $ L > 0$, consider the product manifold 
$$ (P , g_{_P} ) = ( \Sigma \times [0, L], \gamma + d t^2)  .$$ 
If  $(M, g)$ contains a neighborhood $U$ of $\p M $ so that 
$(U, g)$ is isometric to $(P, g_{_P})$ with $ \p M = \Sigma \times \{ 0 \}$, then 
 one can consider the harmonic function  $v  $ on $(P, g_{_P})$ with 
$v = 0 $ on $  \Sigma \times \{ 0 \}$ and $ v = 1 $ on $\Sigma \times \{ L \}$.
By the maximum principle,  $ | \nabla u | \le | \nabla v | $. Hence
\be
 4 r_{_\Sigma}^2 L^{-2}  =  \frac{1}{\pi } \int_{\Sigma} | \nabla v |^2
\ge \frac{1}{\pi } \int_{\Sigma} | \nabla u |^2 ,
\ee
where $ 4 \pi r_{_\Sigma}^2 $ is the area of $(\Sigma, \gamma)$. Therefore, 
if $ (M, g)$ satisfies condition \eqref{eq-m-c-ratio-relation}, then 
(ii) of Corollary \ref{cor-sec-m-c-l-2} shows $ L \le 2 r_{_\Sigma}$. 
\end{rem}

One can always construct manifolds satisfying  \eqref{eq-m-c-ratio-relation}
by cutting off a compact set in a given asymptotically flat $(M, g)$.
For instance, if a  triple $(M, g, \Sigma)$ has $ \m > \c_{_\Sigma} $,   
 then, by \eqref{eq-capacity-St-and-S}, 
 the exterior of $\Sigma_t $  in $(M, g)$  satisfies $ \m \le \c_{_{\Sigma_t}}$
for any regular $ t \ge 1 -  \c_{_\Sigma}  \m^{-1} $.

The complement of a finite domain enclosing the Schwarzschild horizon in $(M_\m , g_{\m})$ is an example of a static extension
in the context of Bartnik's quasi-local mass \cite{Bartnik89}. 
Here an asymptotically flat $(M, g)$ is called static (see \cite{Corvino} for instance)
if there is a nontrivial function $N  $ on $(M,g)$, referred as a static potential,  such that
\be \label{eq-static-system}
\left\{
\begin{array}{rcl}
N \Ric & = & \nabla^2 N \\
\Delta N & = & 0 , \\
\end{array}
\right.
\ee
where $\Ric$ denotes the Ricci curvature of $g$.  These spaces necessarily have zero scalar curvature. 

The next proposition, among other things,  shows an asymptotically flat manifold with boundary, 
admitting a positive static potential, satisfies  \eqref{eq-m-c-ratio-relation}.

\begin{prop} \label{prop-static}
Let $(M, g)$ be a complete, orientable, asymptotically flat $3$-manifold with boundary $\Sigma$.
Suppose there is a static potential $N $ that is positive in  the interior of $(M, g)$. Then
$$  \m \c_{_\Sigma}^{-1} \le 1 . $$
If $\Sigma$ is connected and $H_2 (M, \Sigma) = 0 $, then 

\begin{enumerate}

\item[(i)] $ \m \le \frac{r_{_\Sigma}} {2} \left[ 1 + \left( \frac{1}{16\pi} \int_\Sigma H^2 \right)^\frac12 \right] ,
$ where $r_{_\Sigma}$ is the area-radius of $\Sigma$; and 

\item[(ii)] any closed, regular level set $S_t = N^{-1} (t)$ is connected and enclosing $\Sigma$; 
if $N$ is normalized so that $ N \to 1 $ at $\infty$, then,  along $S_t$, 
$$   N^2  \le \frac{1}{16 \pi} \int_{S_t} H^2 ,  $$
$$ 1-N^2 \le  \left( \frac{1}{\pi} \int_{S_t} | \nabla N |^2  \right)^\frac12 \le ( 1 -  N ) \left[ 1 + \left( \frac{1}{16 \pi} \int_{S_t} H^2 \right)^\frac12 \right] ,  $$
 and 
$$
 N ( 1 - N) ( 1 + N) \le \frac{1}{4\pi} \int_{S_t} H | \nabla N | .
 $$
Moreover, equality holds in any of these inequalities if and only if the exterior of $S_t$ in $(M, g)$ 
 is isometric to a spatial Schwarzschild manifold outside a rotationally symmetric sphere with
nonnegative constant mean curvature.
\end{enumerate} 
\end{prop}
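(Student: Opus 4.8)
The plan is to reduce everything to results already proved in the paper for asymptotically flat $3$-manifolds with boundary satisfying a mass-to-capacity constraint, using the observation that, after normalization, a static potential is (a rescaling of) the capacitary function of each of its regular level sets.

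First I would prove $\m\c_{_\Sigma}^{-1}\le1$. Normalize $N$ so that $N\to1$ at $\infty$. The static system \eqref{eq-static-system} forces $R_g=0$, and since the static mass agrees with the ADM mass one has $N=1-\m|x|^{-1}+O(|x|^{-1-\tau})$ (see \cite{Corvino}). Let $u$ be the capacitary function of $\Sigma$ ($u=0$ on $\Sigma$, $u\to1$ at $\infty$, $u=1-\c_{_\Sigma}|x|^{-1}+O(|x|^{-1-\tau})$), and set $a=\inf_\Sigma N\ge0$. By the minimum principle $N\ge\min\{a,1\}$ on $M$. If $a<1$, then $(N-a)(1-a)^{-1}$ is harmonic, agrees with $u$ at $\infty$, and is $\ge0=u$ on $\Sigma$, hence dominates $u$ on $M$ by the maximum principle; thus $(1-N)(1-a)^{-1}\le1-u$ on $M$, and multiplying by $|x|$ and letting $|x|\to\infty$ gives $\m(1-a)^{-1}\le\c_{_\Sigma}$, i.e.\ $\m\le(1-a)\c_{_\Sigma}\le\c_{_\Sigma}$. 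If $a\ge1$, then $N\ge1$ on $M$, so $\m\le0\le\c_{_\Sigma}$. Thus $\m\c_{_\Sigma}^{-1}\le1$ in all cases. Part (i) is then immediate, since its hypotheses ($\m\c_{_\Sigma}^{-1}\le1$, $\Sigma$ connected, $H_2(M,\Sigma)=0$, $R_g\ge0$) are exactly those of Corollary \ref{cor-sec-m-c-l-2}(i).

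For part (ii) I may assume $\m>0$: if $\m\le0$ then (as is known for asymptotically flat manifolds carrying a positive static potential, via a positive-mass-theorem type argument for $R_g=0$) $(M,g)$ is flat, hence $N$ is affine, hence $N\equiv1$, so there is no regular level set and the claim is vacuous. With $\m>0$ one has $N<1$ near $\infty$, and a maximum-principle analysis of the sub/superlevel sets of $N$ — paralleling the one used for $u$ in Section \ref{sec-ineq}, together with $\Sigma$ connected and $H_2(M,\Sigma)=0$ — shows that any nonempty closed regular level set $S_t=N^{-1}(t)$ has $t\in(0,1)$, separates $\Sigma$ from $\infty$, is connected, and that $M_t:=\overline{\{N>t\}}$ is a complete, orientable, asymptotically flat $3$-manifold with connected boundary $S_t$, $H_2(M_t,S_t)=0$, and $R_g=0$. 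On $M_t$ the function $\hat u_t:=(N-t)(1-t)^{-1}$ is harmonic, vanishes on $S_t$, and tends to $1$ at $\infty$, so it is the capacitary function of $S_t$ in $M_t$; comparing $\hat u_t=1-\tfrac{\m}{1-t}|x|^{-1}+O(|x|^{-1-\tau})$ with the capacitary expansion identifies $\c_{_{S_t}}=\m(1-t)^{-1}$, whence $\m\c_{_{S_t}}^{-1}=1-t$. Applying Theorem \ref{thm-sec-more-ineq-3}, Corollary \ref{cor-sec-more-improved-B-M}, Theorem \ref{thm-sec-more-ineq-1} and Theorem \ref{thm-sec-m-c-l-1} to $(M_t,g,\hat u_t)$, and substituting $|\nabla\hat u_t|=(1-t)^{-1}|\nabla N|$ along $S_t$, $\m\c_{_{S_t}}^{-1}=1-t$, and $N|_{S_t}=t$, then clearing powers of $1-t$, yields respectively the four displayed inequalities of (ii). Equality in any one of them is equality in the corresponding cited theorem for $(M_t,g,\hat u_t)$, hence occurs iff $(M_t,g)$ is a spatial Schwarzschild exterior of a rotationally symmetric sphere; since $\m\c_{_{S_t}}^{-1}=1-t\in(0,1)$, using $\c_r=r+\m/2$ that sphere has parameter $k=2(1+t)^{-1}\in(1,2)$, hence positive constant mean curvature, which gives the stated equality characterization.

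The step I expect to be the main obstacle is the topological/maximum-principle input in part (ii): showing that every closed regular level set of $N$ is connected, separates $\Sigma$ from $\infty$, lies at a level $t\in(0,1)$, and that $M_t$ inherits $H_2(M_t,S_t)=0$. This is the analogue of the level-set connectedness exploited in Section \ref{sec-ineq}, but one must additionally verify that the complementary compact region $\overline{\{N\le t\}}$ does not obstruct the homological argument, and that $\m>0$ genuinely forces $t<1$ (via the flux identity $\int_{S_t}\partial_\nu N=4\pi\m$ with $\nu$ pointing toward $\infty$, which also re-derives $\c_{_{S_t}}=\m(1-t)^{-1}$).
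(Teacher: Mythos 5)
Your proposal follows the paper's route essentially step for step: the first claim by a maximum-principle comparison of (a normalization of) $N$ with the capacitary function $u$, part (i) by quoting Corollary \ref{cor-sec-m-c-l-2}(i), and part (ii) by applying Theorem \ref{thm-sec-more-ineq-3}, Corollary \ref{cor-sec-more-improved-B-M}, Theorem \ref{thm-sec-more-ineq-1} and Theorem \ref{thm-sec-m-c-l-1} to $u^{(t)}=(N-t)/(1-t)$ on the exterior of $S_t$ together with $\c_{_{S_t}}=\m/(1-t)$; the substitutions you carry out and the mean-curvature sign in the rigidity case are all correct. The one place you diverge is in trying to justify that every closed regular level set has $t\in(0,1)$: your claim that $\m\le 0$ forces $(M,g)$ to be flat is false — the exterior region $\{|x|\ge r_0\}$ of a negative-mass Schwarzschild manifold (with $r_0>|\m|/2$) carries the positive static potential $N=(1-\tfrac{\m}{2r})(1+\tfrac{\m}{2r})^{-1}>1$ and is not flat, and its level sets sit at levels $t>1$. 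To your credit you identified the real pressure point (the sign of $1-t$, needed both for $\c_{_{S_t}}=\m/(1-t)>0$ and for $|\nabla u^{(t)}|=(1-t)^{-1}|\nabla N|$); the paper's own proof simply assumes this implicitly, so your (flawed) attempt to close the case $\m\le0$ is an issue shared with, rather than created relative to, the published argument. The correct way to force $t<1$ is your own flux observation: on the exterior of a closed regular level set $S_t$ enclosing $\Sigma$, the Hopf lemma applied to $N-t$ gives $\mathrm{sgn}\!\int_{S_t}\partial_\nu N=\mathrm{sgn}(1-t)$, while $\int_{S_t}\partial_\nu N=4\pi\m$; so $t<1$ exactly when $\m>0$, and the case $t>1$ (where the middle displayed inequality of (ii) genuinely fails) must be excluded by hypothesis rather than by a flatness argument.
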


\begin{proof}
The static system \eqref{eq-static-system} and the assumption $N > 0 $ near $\infty$ imply that, 
upon multiplying $N$ by a constant, 
\bee
N = 1 - \frac{\m}{|x|} + o (|x|^{-1} ) , 
\eee
where $ \m $ is the mass of $(M, g)$ (see \cite{BM87, MT14} for instance).
If in addition $N \ge 0 $ at $ \Sigma $, then $ \m \le \c_{_\Sigma}  $ 
by the maximum principle.

The mass estimate in (i)  follows from (i) of Corollary \ref{cor-sec-m-c-l-2}. 

Suppose $S_t$ is a closed, regular level set of $N$. By the topological assumption on $M$
and the fact $N$ is harmonic, 
$S_t$ only has one connected component and it encloses $\Sigma$. 
Let $ E_t $ denote the exterior of $S_t$ in $M$.
The inequalities in (ii), with the rigidity conclusions, follow from applying Theorem \ref{thm-sec-more-ineq-3}, Corollary \ref{cor-sec-more-improved-B-M}, 
Theorem \ref{thm-sec-more-ineq-1}, and Theorem \ref{thm-sec-m-c-l-1}, respectively, 
 to 
$$ u^{(t)} = \frac{ N - t}{1-t}$$
on $(E_t, g)$ and using the fact $ \displaystyle \c_{_{S_t}} = \frac{ \m }{1-t} $. 
\end{proof}

In the context of the Bartnik mass \cite{Bartnik89}, asymptotically flat extensions are often assumed 
to have no closed minimal surface enclosing the boundary 
to prevent the infimum of the mass over all extensions from being trivially zero. 
We note here, if an asymptotically flat $3$-manifold $(M, g)$ with boundary $\Sigma$ satisfies the mass-to-capacity 
relation
$
 \m \c_{_\Sigma}^{-1} \le 1 , 
$
 then necessarily there are no closed minimal surfaces enclosing $\Sigma$.
This is because, if such a minimal surface $S$ exists, then $ \m \ge \c_{_S}$ by the result of Bray \cite{Bray02}.
On the other hand, $ \c_{_S} > \c_{_\Sigma}$. Hence, $ \m > \c_{_\Sigma}$, violating \eqref{eq-m-c-ratio-relation}.

Considering this and Proposition \ref{prop-static},  we think  manifolds satisfying condition \eqref{eq-m-c-ratio-relation}
are worthy of further study. The mass-to-capacity ratio condition $ \m \c_{_\Sigma}^{-1} \le 1 $
may serve as an alternative to the no-minimal-surface or outer-minimizing conditions in the formulation of the Bartnik mass.

\vspace{.1cm}

\appendix

\section{Regularization and integration} \label{sec-regularization}

In this appendix, we give the regularization arguments
that can be used to verify the monotonicity of $\Psi (t)$, $\mathcal{A}(t)$ and $\mathcal{B}(t)$ 
in Section \ref{sec-ineq}. 

\begin{lma} \label{lem-reg}
Let $ u $ be a harmonic function on a compact Riemannian manifold $(\Omega, g)$ with boundary $\p \Omega$. 
Suppose $ \max_{\Omega} u < 1 $. Then
\be \label{eq-reg-for-psi}
\begin{split}
 \int_{\p \Omega} \frac{  | \nabla u |   }{ 1 - u }  \frac{\p u}{\p \zeta} 
=  \int_\Omega \frac{ | \nabla u |^3  }{ ( 1 - u )^2 }    + 
\int_{ \{ \nabla u \ne 0 \}  \subset \Omega } \frac{  \nabla^2 u ( \nabla u , \nabla u  )  } { (1 - u)  | \nabla u |  }   
\end{split}
\ee
and
\be \label{eq-reg-for-B}
\begin{split}
 \int_{\p \Omega} \frac{  | \nabla u |   }{ (1 - u)^3 }  \frac{\p u}{\p \zeta} 
=  \int_\Omega \frac{ 3  | \nabla u |^3  }{ ( 1 - u )^4 }    + 
\int_{ \{ \nabla u \ne 0 \}  \subset \Omega } \frac{  \nabla^2 u ( \nabla u , \nabla u  ) } { (1 - u)^3 | \nabla u |  }     . 
\end{split}
\ee
Here $ \zeta $ denotes the unit normal to $ \p \Omega$ pointing out of $\Omega$. 
\end{lma}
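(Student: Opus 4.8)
The plan is to prove both identities by the same device: integrate a suitable divergence over the region where $\nabla u \neq 0$, handling the critical set $\{\nabla u = 0\}$ by a standard approximation/coarea argument of the kind used by Stern \cite{Stern19} and Sard's theorem. For \eqref{eq-reg-for-psi}, first I would observe that the desired boundary integrand $\frac{|\nabla u|}{1-u}\frac{\partial u}{\partial\zeta}$ should arise from the divergence of the vector field $X = \frac{|\nabla u|}{1-u}\nabla u$ on the open set $\{\nabla u \neq 0\}$, since $|\nabla u|$ is smooth there. Computing, $\operatorname{div} X = \frac{|\nabla u|}{1-u}\Delta u + \nabla u \cdot \nabla\!\left(\frac{|\nabla u|}{1-u}\right)$; using $\Delta u = 0$ and expanding the second term gives $\frac{|\nabla u|}{(1-u)^2}|\nabla u|^2 + \frac{1}{1-u}\cdot\frac{\nabla^2 u(\nabla u,\nabla u)}{|\nabla u|}$, which is exactly the integrand on the right-hand side of \eqref{eq-reg-for-psi}. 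The same computation with $Y = \frac{|\nabla u|}{(1-u)^3}\nabla u$ produces the coefficient $3$ in \eqref{eq-reg-for-B}, because $\nabla u\cdot\nabla\!\left((1-u)^{-3}\right) = 3(1-u)^{-4}|\nabla u|^2$.

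The main obstacle is justifying that the divergence theorem may be applied despite the critical set, where $X$ and $Y$ are merely Lipschitz (or worse) rather than $C^1$, and where $|\nabla u|$ fails to be differentiable. I would handle this in the usual way: apply the divergence theorem not on $\Omega$ but on $\Omega \setminus \{|\nabla u| \le \varepsilon\}$, or better, integrate $\operatorname{div}\!\big(\phi_\varepsilon(|\nabla u|^2)\,(1-u)^{-1}\nabla u\big)$ where $\phi_\varepsilon$ is a smooth cutoff vanishing near $0$ and equal to $1$ on $[\varepsilon,\infty)$, then let $\varepsilon \to 0$. By Sard's theorem almost every level of $|\nabla u|^2$ is regular, so the boundary terms on the artificial surfaces $\{|\nabla u|^2 = \varepsilon\}$ can be controlled; one checks that these terms tend to $0$ (the integrand is $O(\varepsilon^{3/2})$ times the area of a near-critical level set, and the measure of $\{|\nabla u|^2 \le \varepsilon\}$ vanishes as $\varepsilon\to 0$). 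Finiteness of the interior integrals on the right — in particular integrability of $(1-u)^{-1}|\nabla u|^{-1}\nabla^2 u(\nabla u,\nabla u)$ over $\{\nabla u \neq 0\}$ — follows from the fact that $|\nabla^2 u(\nabla u,\nabla u)| \le |\nabla u|^2|\nabla^2 u|$, so the integrand is bounded by $(1-u)^{-1}|\nabla u|\,|\nabla^2 u|$, which is in $L^1$ since $|\nabla u|$ and $|\nabla^2 u|$ are bounded on the compact $\Omega$ and $1-u$ is bounded away from $0$.

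Concretely, the steps in order: (1) record that $1-u \ge \delta > 0$ on $\Omega$ by the maximum principle, so all denominators are harmless; (2) do the pointwise divergence computation on $\{\nabla u \neq 0\}$ for both vector fields, using $\Delta u = 0$; (3) set up the $\phi_\varepsilon$-cutoff and apply the divergence theorem on all of $\Omega$ (now legitimate since the cutoff field is Lipschitz and piecewise smooth, or genuinely $C^1$ if $\phi_\varepsilon$ is chosen smooth and $u$ is smooth, which it is by elliptic regularity away from $\partial\Omega$ and smooth up to $\partial\Omega$ if the data are); (4) pass to the limit $\varepsilon \to 0$: the boundary term on $\partial\Omega$ converges by dominated convergence since $\phi_\varepsilon(|\nabla u|^2)\to 1$ off the critical set and the critical set meets $\partial\Omega$ in measure zero for regular boundary values — and where it does not, $\frac{\partial u}{\partial\zeta}$ handles the degeneracy; the interior terms converge by monotone/dominated convergence using the $L^1$ bounds from the previous paragraph. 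This yields \eqref{eq-reg-for-psi} and \eqref{eq-reg-for-B}. I expect the only genuinely delicate point to be the vanishing of the artificial boundary terms on $\{|\nabla u|^2 = \varepsilon\}$, which is exactly the technical heart of Stern's method and can be cited or reproduced verbatim from Section 2 of \cite{Stern19}.
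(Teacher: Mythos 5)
Your pointwise divergence computations and the overall divergence-theorem strategy are exactly right, and your argument can be made to work, but your regularization device is genuinely different from (and more delicate than) the paper's. The paper does not excise the critical set or cut off near it at all: it replaces $|\nabla u|$ by the globally smooth function $\sqrt{|\nabla u|^2+\epsilon}$, computes
$\div\bigl( \frac{\sqrt{|\nabla u|^2+\epsilon}}{1-u}\nabla u \bigr) = \frac{\sqrt{|\nabla u|^2+\epsilon}}{(1-u)^2}|\nabla u|^2 + \frac{1}{1-u}\frac{\nabla^2 u(\nabla u,\nabla u)}{\sqrt{|\nabla u|^2+\epsilon}}$,
applies the divergence theorem on all of $\Omega$ with no artificial boundary terms, and lets $\epsilon\to 0$ by dominated convergence, since the Hessian term is bounded by $\frac{1}{1-u}|\nabla^2 u|\,|\nabla u|$ and converges pointwise to the stated integrand on $\{\nabla u\ne 0\}$ and to $0$ on the critical set. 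This completely sidesteps Sard's theorem, the flux through $\{|\nabla u|^2=\varepsilon\}$, and the behavior of the critical set on $\partial\Omega$. By contrast, the one step you yourself flag as delicate is also the one place your justification is not yet a proof: the vanishing of the artificial boundary term is not implied by the measure of $\{|\nabla u|^2\le\varepsilon\}$ tending to zero, because that says nothing about $\mathcal{H}^2(\{|\nabla u|^2=\varepsilon\})$, which could a priori blow up; you would need the coarea formula $\int_0^\delta \mathcal{H}^2(\{|\nabla u|=s\})\,ds \le \int_\Omega |\nabla^2 u| < \infty$ to extract a subsequence $\varepsilon_j\to 0$ along which $\varepsilon_j\,\mathcal{H}^2(\{|\nabla u|=\varepsilon_j\})\to 0$ (also, the flux of your field through that surface is $O(\varepsilon)$, not $O(\varepsilon^{3/2})$, when the surface is $\{|\nabla u|^2=\varepsilon\}$). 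Your alternative cutoff $\phi_\varepsilon(|\nabla u|^2)$ does close this gap cleanly, since the extra term involving $\phi_\varepsilon'$ is uniformly dominated by $C|\nabla u|\,|\nabla^2 u|$ and supported on a set shrinking to the empty set; but the paper's $\sqrt{|\nabla u|^2+\epsilon}$ trick buys the same conclusion with strictly less machinery, and is worth adopting.
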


\begin{proof}
Given any constant  $\epsilon > 0$, one has
\bee 
\div \left( \frac{ \sqrt{ | \nabla u |^2 + \epsilon}   }{ 1 - u }  \nabla u \right)  
=  \frac{ \sqrt{ | \nabla u |^2 + \epsilon} }{ ( 1 - u )^2 } | \nabla u |^2 + 
\frac{1} { 1 - u} \frac{ \nabla^2 u ( \nabla u, \nabla u )  }{\sqrt{ | \nabla u |^2 + \epsilon}}    .
\eee
Therefore,
\be \label{eq-divergence-thm-1}
\int_{\p \Omega} \frac{ \sqrt{ | \nabla u |^2 + \epsilon}   }{ 1 - u }  \frac{\p u}{\p \zeta}  \\
=  \int_\Omega \frac{ \sqrt{ | \nabla u |^2 + \epsilon} }{ ( 1 - u )^2 } | \nabla u |^2 + 
\int_{  \Omega  } \frac{1} { 1 - u} \frac{  \nabla^2 u ( \nabla u, \nabla u ) }{\sqrt{ | \nabla u |^2 + \epsilon}}   . 
\ee
For the third term in \eqref{eq-divergence-thm-1}, one notes
\bee
\begin{split}
 \int_{  \Omega  } \frac{1} { 1 - u} \frac{  \nabla^2 u ( \nabla u, \nabla u ) }{\sqrt{ | \nabla u |^2 + \epsilon}} 
= & \ \int_{ \{ \nabla u \ne 0 \}  \subset  \Omega  } \frac{1} { 1 - u} \frac{  \nabla^2 u ( \nabla u, \nabla u ) }{\sqrt{ | \nabla u |^2 + \epsilon}} \\
= & \ \int_{ \{ \nabla u \ne 0 \}  \subset  \Omega  }   \frac{1} { 1 - u}  \, \nabla^2 u \left( \frac{\nabla u}{ | \nabla u | }, \frac{ \nabla u }{ | \nabla u | } \right) 
 \frac{   | \nabla u |^2 }{\sqrt{ | \nabla u |^2 + \epsilon}} .
\end{split}
\eee
Thus, taking $ \epsilon \to 0 $ in \eqref{eq-divergence-thm-1} proves \eqref{eq-reg-for-psi}. 

Similarly, to show \eqref{eq-reg-for-B},  one has
\bee 
\div \left[ \frac{ \sqrt{ | \nabla u |^2 + \epsilon}   }{ (1 - u)^3 }  \nabla u \right] 
=  \frac{ 3  \sqrt{ | \nabla u |^2 + \epsilon} }{ ( 1 - u )^4 } | \nabla u |^2 + 
\frac{1} { (1 - u)^3 } \frac{ \nabla^2 u ( \nabla u, \nabla u )  }{\sqrt{ | \nabla u |^2 + \epsilon}}    .
\eee
Consequently,
\be \label{eq-divergence-thm-2}
\int_{\p \Omega} \frac{ \sqrt{ | \nabla u |^2 + \epsilon}   }{ (1 - u)^3 }  \frac{\p u}{\p \zeta}  \\
=  \int_\Omega \frac{ 3  \sqrt{ | \nabla u |^2 + \epsilon} }{ ( 1 - u )^4 } | \nabla u |^2 + 
\int_{  \Omega  } \frac{1} { (1 - u)^3 } \frac{ \nabla^2 u ( \nabla u, \nabla u )  }{\sqrt{ | \nabla u |^2 + \epsilon}} . 
\ee
The third term above satisfies 
\bee
\begin{split}
\int_{  \Omega  } \frac{1} { (1 - u)^3 } \frac{ \nabla^2 u ( \nabla u, \nabla u )  }{\sqrt{ | \nabla u |^2 + \epsilon}}
= & \ \int_{ \{ \nabla u \ne 0 \}  \subset  \Omega  }   \frac{1} { (1 - u)^3 }  \, \nabla^2 u \left( \frac{\nabla u}{ | \nabla u | }, \frac{ \nabla u }{ | \nabla u | } \right) 
 \frac{   | \nabla u |^2 }{\sqrt{ | \nabla u |^2 + \epsilon}} .
\end{split}
\eee
Thus, \eqref{eq-reg-for-B} follows by taking taking $ \epsilon \to 0 $ in \eqref{eq-divergence-thm-2}.
\end{proof}

\begin{lma} \label{lem-reg-3}
Let $ u $ be a harmonic function on a compact, orientable, Riemannian $3$-manifold $(\Omega, g)$ with boundary $\p \Omega$. 
Suppose $\max_\Omega u < 1 $ and $u$ equals a constant on each connected component of $\p \Omega$. 
Then
\be \label{eq-weighted}
\begin{split}
\int_{\p \Omega}   \frac{ H  | \nabla u |  } { ( 1 - u )^2}  \le  \int_{t_1}^{t_2} 
\frac{1}{(1-t)^2}   \left\{  \int_{\Sigma_t}  \left[ \frac{2  H | \nabla u | }{1-t}    
-    \frac12  \left(  \frac{ | \nabla^2 u |^2 }{  | \nabla u |^2 } +  R \right)  \right] +  2 \pi  \chi(\Sigma_t) \right\}   .
\end{split}
\ee
Here the mean curvature $H$ of $\p \Omega$ is taken with respect to the 
unit normal $\zeta$ pointing out of $\Omega$, 
the mean curvature $H$ of a regular level set $\Sigma_t$ is taken with respect to
$ | \nabla u |^{-1} | \nabla u |$,  $\chi (\Sigma_t)$ is the Euler characteristic of $\Sigma_t$, 
 $t_1 = \min_{\Omega} u $, and $ t_2 = \max_{\Omega} u $.
\end{lma}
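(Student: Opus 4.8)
The plan is to obtain \eqref{eq-weighted} as a weighted form of the integrated Stern inequality \eqref{eq-bdry-1}. Abbreviate $G(t) := \int_{\Sigma_t} H | \nabla u |$ and
\[
 h(t) := 2 \pi \chi(\Sigma_t) - \frac12 \int_{\Sigma_t} \left( \frac{ | \nabla^2 u |^2 }{ | \nabla u |^2 } + R \right) ,
\]
so that \eqref{eq-bdry-1} reads $G(b) - G(a) \le \int_a^b h(s)\,ds$ for every pair of regular values $a < b$ in $[t_1, t_2]$; on intervals free of critical values this is an equality, since \eqref{eq-Hgrad-prime} combined with \eqref{eq-hessian-square} gives $G'(t) = h(t)$ at regular $t$. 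I would also record two elementary facts about the endpoints. By the strong maximum principle $u$ has no interior extrema, and by the Hopf lemma $\nabla u \neq 0$ along $\p \Omega$; hence, as $t \to t_i$, the level set $\Sigma_t$ converges smoothly to the union $\Sigma_{t_i}$ of the components of $\p \Omega$ carrying the value $t_i$, and $| \nabla u |^{-1} \nabla u$ equals $+\zeta$ over the $t_2$-components and $-\zeta$ over the $t_1$-components. Consequently $G(t_2^-) = \int_{\Sigma_{t_2}} H | \nabla u |$ and $G(t_1^+) = - \int_{\Sigma_{t_1}} H | \nabla u |$ (with $H$ now taken with respect to $\zeta$), so $(1-t_2)^{-2} G(t_2^-) - (1-t_1)^{-2} G(t_1^+) = \int_{\p \Omega} H | \nabla u | ( 1 - u )^{-2}$.

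Next I would promote the pairwise inequality to a monotonicity statement: the function $\Phi(t) := G(t) - \int_{t_1}^{t} h(s)\,ds$ is non-increasing on the set of regular values, which is of full measure by Sard's theorem, hence extends to a function of bounded variation on $[t_1, t_2]$ with $dG = d\Phi + h\,dt$ and $d\Phi \le 0$. Integrating $\int_{t_1}^{t_2} \frac{2}{(1-t)^3} G(t)\,dt$ by parts (with $w(t) = (1-t)^{-2}$, $w'(t) = 2(1-t)^{-3}$) then gives
\be
\int_{t_1}^{t_2} \frac{ 2 \, G(t) }{ (1-t)^3 } \, dt
= \left[ \frac{ G(t) }{ (1-t)^2 } \right]_{t_1^+}^{t_2^-}
- \int_{t_1}^{t_2} \frac{ h(t) }{ (1-t)^2 } \, dt
- \int_{(t_1, t_2)} \frac{ d\Phi(t) }{ (1-t)^2 } ,
\ee
and since $(1-t)^{-2} > 0$ and $d\Phi \le 0$ the last term is nonnegative; rearranging,
\be
\int_{t_1}^{t_2} \frac{1}{ (1-t)^2 } \left[ \frac{ 2 \, G(t) }{ 1-t } + h(t) \right] dt
\ \ge \ \frac{ G(t_2^-) }{ (1-t_2)^2 } - \frac{ G(t_1^+) }{ (1-t_1)^2 }
= \int_{\p \Omega} \frac{ H | \nabla u | }{ (1-u)^2 } ,
\ee
which is \eqref{eq-weighted} after unravelling the definitions of $G$ and $h$. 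The coarea identities used here, the legitimacy of the Stieltjes integration by parts across the critical set of $u$, and the finiteness of the integrals involved ($\int_\Omega \frac{ | \nabla^2 u |^2 }{ | \nabla u | } < \infty$ because the critical set of the harmonic function $u$ has Hausdorff dimension $\le 1$, and $\chi(\Sigma_t)$ is finite for almost every $t$ by Sard's theorem) are exactly what the $\epsilon$-regularization of Lemma \ref{lem-reg} is designed to handle, and I would invoke it for these steps.

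The main obstacle is the inequality $d\Phi \le 0$ — that is, $G(t) = \int_{\Sigma_t} H | \nabla u |$ can only jump downward as $t$ increases past a critical value of $u$. This is precisely Stern's level-set analysis, already encoded in \eqref{eq-bdry-1}, so I would quote it rather than reprove it. Everything else is bookkeeping: the integration by parts, the identification of the endpoint limits $G(t_i^{\pm})$ with the boundary integral via the maximum principle and the Hopf lemma, and the regularization of Lemma \ref{lem-reg}. In the applications one has $\Omega = \Omega_{[t_1, t_2]}$ and $\p \Omega = \Sigma_{t_1} \cup \Sigma_{t_2}$, so no boundary component carries an intermediate value of $u$ and the jumps of $\Phi$ occur only at critical values.
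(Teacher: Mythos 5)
Your route is genuinely different from the paper's. The paper proves \eqref{eq-weighted} in one shot by applying the divergence theorem to $\nabla\sqrt{|\nabla u|^2+\epsilon}\,/\,(1-u)^2$, identifying the boundary term via $\p_\zeta\sqrt{|\nabla u|^2+\epsilon}=-|\nabla u|^2H/\sqrt{|\nabla u|^2+\epsilon}$, bounding $\Delta\sqrt{|\nabla u|^2+\epsilon}$ from below by Stern's refined Kato/Bochner inequality away from a small neighborhood $W$ of the critical values and by $-\max|\Ric|\,|\nabla u|$ on $u^{-1}(W)$, and then letting $\epsilon\to 0$ and $|W|\to 0$ (see \eqref{eq-int-zeta}--\eqref{eq-limit-mid-term}). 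You instead reduce everything to the one-variable function $G(t)=\int_{\Sigma_t}H|\nabla u|$, use the already-integrated inequality \eqref{eq-bdry-1} to assert that $\Phi(t)=G(t)-\int_{t_1}^t h$ is non-increasing, and recover the weight $(1-t)^{-2}$ by a Stieltjes integration by parts. This is an attractive packaging: it makes transparent that \eqref{eq-weighted} is "\eqref{eq-bdry-1} tested against a monotone weight," and your identification of the endpoint limits $G(t_1^+),G(t_2^-)$ with the two pieces of the boundary integral (via Hopf and the sign flip of $|\nabla u|^{-1}\nabla u$ on the $t_1$-components) is correct. The restriction to $\p\Omega=\Sigma_{t_1}\cup\Sigma_{t_2}$ that your endpoint argument forces is acceptable, since that is the only case used in the paper.

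The genuine soft spot is the analytic bookkeeping you defer to Lemma \ref{lem-reg}: that lemma regularizes the divergence of $\sqrt{|\nabla u|^2+\epsilon}\,(1-u)^{-k}\nabla u$, i.e.\ exactly the $G$-type and $\int|\nabla u|^2$-type terms, and says nothing about the quantities your Stieltjes argument actually needs, namely that $h(t)=2\pi\chi(\Sigma_t)-\tfrac12\int_{\Sigma_t}(|\nabla^2u|^2|\nabla u|^{-2}+R)$ lies in $L^1(t_1,t_2)$ and that $G$ is of bounded variation, without which ``$\Phi$ is non-increasing'' is not even well posed and the integration by parts $\int w'G=[wG]-\int w\,dG$ is not justified. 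Your one-line justification (``the critical set has Hausdorff dimension $\le 1$'') does not yield $\int_{t_1}^{t_2}\int_{\Sigma_t}|\nabla^2u|^2|\nabla u|^{-2}<\infty$, nor the integrability of $\chi(\Sigma_t)^+$ (in the general statement $\Sigma_t$ need not be connected). One can close this gap --- e.g.\ by first running \eqref{eq-bdry-1} on $[t_1+\delta,t_2-\delta]$ to get $h^-\in L^1$ once $\int\chi(\Sigma_t)^+\,dt$ is controlled, or by an Abel-summation version of your argument that never writes $\int_{t_1}^t h$ --- but these are precisely the difficulties the paper's $\epsilon$-regularization (with the $W$/$D$ splitting in \eqref{eq-closed-set-D}--\eqref{eq-open-set-W}) is designed to avoid: there the finiteness of the weighted Bochner integral comes out as a conclusion rather than being needed as a hypothesis. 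So: correct strategy and correct algebra, but the measure-theoretic justification of the Stieltjes step needs to be supplied, and it is not supplied by the lemma you cite.
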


\begin{proof}
For any constant  $\epsilon > 0$, one has
\bee
\begin{split}
 \div \left[ \frac{ \nabla \sqrt{ | \nabla u |^2 + \epsilon } }{ ( 1 - u )^2 }  \right] 
=  \frac{ \Delta \sqrt{  | \nabla u |^2 + \epsilon }  }{ ( 1 - u )^2} 
+  \frac{2 \,  \nabla^2 u ( \nabla u , \nabla u ) }{ ( 1 - u )^3  \sqrt{ | \nabla u |^2 + \epsilon }  } .
\end{split}
\eee
Therefore,  
\be \label{eq-int-zeta}
\int_{\p \Omega}   \frac{  \p_\zeta  \sqrt{ | \nabla u |^2 + \epsilon }   }{ ( 1 - u )^2 }   \\
= \int_{\Omega} \frac{ \Delta \sqrt{  | \nabla u |^2 + \epsilon }  }{ ( 1 - u )^2} 
+  \int_{\Omega} \frac{2 \,  \nabla^2 u ( \nabla u , \nabla u ) }{ ( 1 - u )^3  \sqrt{ | \nabla u |^2 + \epsilon }  } .
\ee

As $ u $ is constant on each connected component of $\p \Omega$, direct calculations gives
\bee
\p_\zeta \sqrt{ | \nabla u |^2 + \epsilon }  = - \frac{ | \nabla u |^2} {  \sqrt{ | \nabla u |^2 + \epsilon } } H.
\eee
(See Lemma 2.1 in \cite{HMT20} for instance.)
Thus,
\be \label{eq-limit-bdry-HMT}
\lim_{\epsilon \to 0} \int_{\p \Omega}   \frac{  \p_\zeta  \sqrt{ | \nabla u |^2 + \epsilon }   }{ ( 1 - u )^2 } 
= -  \int_{\p \Omega}   \frac{H  | \nabla u | } { ( 1 - u )^2} . 
\ee

As in the proof of the previous lemma, taking $\epsilon \to 0$ in the third term in \eqref{eq-int-zeta} gives 
\be \label{eq-as-in-previous-pf}
\begin{split}
\lim_{ \epsilon \to 0} \int_{\Omega} \frac{2 \,  \nabla^2 u ( \nabla u , \nabla u ) }{ ( 1 - u )^3  \sqrt{ | \nabla u |^2 + \epsilon }  } 
=  & \ \int_{ \{ \nabla u \neq 0 \} \subset \Omega} \frac{2 \,  \nabla^2 u ( \nabla u , \nabla u ) }{ ( 1 - u )^3  | \nabla u | } \\
= & \ - \int_{t_1}^{t_2}  \frac{2}{(1-t)^3 } \int_{\Sigma_t}  H | \nabla u | ,
\end{split}
\ee
where the second equation follows from the coarea formula and \eqref{eq-lap-conse}.

To deal with the second term in \eqref{eq-int-zeta}, we follow an argument of Stern \cite{Stern19}. 
Let $ \mathcal{C} $ denote the set of critical values of $u$ in $[t_1, t_2]$.
Let $ W  $ denote an open set of $ [ t_1, t_2]$
such that $ W$ contains $ \mathcal{C} $. Let $ D $ be the complement of $ W$ in  $[ t_1, t_2 ] $.

On $u^{-1} (D)$, $ \frac{ \Delta \sqrt{  | \nabla u |^2 + \epsilon }  }{ ( 1 - u )^2 |\nabla u | }$ is integrable.
By coarea formula, 
\bee
\int_{u^{-1} ( D ) }  \frac{ \Delta \sqrt{  | \nabla u |^2 + \epsilon }  }{ ( 1 - u )^2 }
= \int_{ D} \int_{\Sigma_t}  \frac{ \Delta \sqrt{  | \nabla u |^2 + \epsilon }  }{ ( 1 - u )^2  | \nabla u | } .
\eee
Along $\Sigma_t$ which a regular level set of $u$, by equation (14) in \cite{Stern19},
\be
\Delta \sqrt{ | \nabla u |^2 + \epsilon} \ge \frac{1}{2 \sqrt{ | \nabla u |^2 + \epsilon} } 
\left[ | \nabla^2 u |^2 + (R - 2 K _{_{\Sigma_t} } ) | \nabla u |^2 \right] ,
\ee
where $ K_{_{\Sigma_t} } $ is the Gauss curvature of $\Sigma_t$. 
Thus, 
\be  \label{eq-int-over-u-D}
\int_{u^{-1} ( D ) }  \frac{ \Delta \sqrt{  | \nabla u |^2 + \epsilon }  }{ ( 1 - u )^2 }
\ge
 \int_{ D} \int_{\Sigma_t}  \frac{ 1}{ ( 1 - u )^2   | \nabla u |   } 
 \frac{\left[ | \nabla^2 u |^2 + (R - 2 K_{_{\Sigma_t} } ) | \nabla u |^2 \right] }{  
 2 \sqrt{ | \nabla u |^2 + \epsilon}    } .
\ee
With $ W$ fixed, letting $ \epsilon \to 0$ in \eqref{eq-int-over-u-D} gives
\be \label{eq-closed-set-D}
\begin{split}
\liminf_{\epsilon \to 0} \int_{u^{-1} ( D ) }  \frac{ \Delta \sqrt{  | \nabla u |^2 + \epsilon }  }{ ( 1 - u )^2 }
\ge & \ 
 \int_{ D} \int_{\Sigma_t}  \frac{\left[ | \nabla^2 u |^2 + (R - 2 K_{_{\Sigma_t} } ) | \nabla u |^2 \right] }{ ( 1 - u )^2  \, 2 | \nabla u |^2   } \\
 = & \  \int_{ D}  \frac{1}{(1-t)^2}  
 \left[ \int_{\Sigma_t} \frac12 \left(   | \nabla u |^{-2}   | \nabla^2 u |^2 + R \right) - 2 \pi \chi (\Sigma_t) \right]  ,
\end{split}
\ee
where one also used the Gauss-Bonnet theorem. 

To estimate the integral on $u^{-1} (W)$, one notes 
\bee
\begin{split}
 \Delta \sqrt{ | \nabla u |^2 + \epsilon } 
= & \ \frac{1}{ \sqrt{ | \nabla u |^2 + \epsilon } } 
\left[ | \nabla^2 u |^2 + \Ric (\nabla u , \nabla u ) - \frac{1} { 4 ( |\nabla u |^2 + \epsilon) } | \nabla | \nabla u |^2 |^2 \right] \\
\ge & \ \frac{1}{ \sqrt{ | \nabla u |^2 + \epsilon } } \Ric (\nabla u , \nabla u )  .
\end{split}
\eee
This implies  
\be \label{eq-open-set-W}
\begin{split}
\int_{u^{-1} ( W ) }  \frac{ \Delta \sqrt{  | \nabla u |^2 + \epsilon }  }{ ( 1 - u )^2 }
\ge & \   -  \max_{\Omega} | \Ric | \int_{u^{-1} ( W ) }  \frac{ | \nabla u |  }{ ( 1 - u )^2 } \\
=  & \ -  \max_{\Omega} | \Ric | \int_{ W  }  \int_{\Sigma_t}  \frac{ 1 }{ ( 1 - u )^2 } .
\end{split}
\ee

It follows from \eqref{eq-closed-set-D} and \eqref{eq-open-set-W} that
\bee
\begin{split}
 \lim_{\epsilon \to 0} \int_{\Omega} \frac{ \Delta \sqrt{  | \nabla u |^2 + \epsilon }  }{ ( 1 - u )^2} \ge & \ 
 \int_{ D}  \frac{1}{(1-t)^2}  
 \left[ \int_{\Sigma_t} \frac12 \left(   | \nabla u |^{-2}   | \nabla^2 u |^2 + R \right) - 2 \pi \chi (\Sigma_t) \right]  \\
& \  -  \max_{\Omega} | \Ric | \int_{ W  }  \int_{\Sigma_t}  \frac{ 1 }{ ( 1 - u )^2 } .
\end{split}
\eee
As $ \int_\Omega \frac{ | \nabla u |} { ( 1 - u)^2} < \infty$, 
by choosing the measure of $ W$  to be arbitrarily small, one has
\be \label{eq-limit-mid-term}
\begin{split}
 \lim_{\epsilon \to 0} \int_{\Omega} \frac{ \Delta \sqrt{  | \nabla u |^2 + \epsilon }  }{ ( 1 - u )^2} 
\ge & \ 
 \int_{t_1}^{t_2}  \frac{1}{(1-t)^2}  
 \left[ \int_{\Sigma_t} \frac12 \left(   | \nabla u |^{-2}   | \nabla^2 u |^2 + R \right) - 2 \pi \chi (\Sigma_t) \right]  .
\end{split}
\ee
The lemma now follows from \eqref{eq-int-zeta},  \eqref{eq-limit-bdry-HMT}, \eqref{eq-as-in-previous-pf} and \eqref{eq-limit-mid-term}.
\end{proof}

\begin{rem}
\eqref{eq-weighted} may be viewed as a weighted version of the identity \eqref{eq-bdry-1}.
\end{rem}

\begin{prop} \label{prop-reg-monotone}
Let $ (\Omega, g)$ be a connected, compact, orientable, Riemannian $3$-manifold with boundary $\p \Omega$.
Suppose $\p \Omega$ is the disjoint union of  two nonempty pieces $S_1 $ and $S_2$.
Let $ u $ be a harmonic function on $(\Omega, g)$ such that $ u = c_i$ on $ S_i$, $ i = 1, 2$, 
where $ c_1$, $ c_2$ are constants with $ c_1 < c_2 < 1$.
For regular values $t$, let 
$$ A(t) = 8 \pi - \frac{1}{( 1 - t)} \int_{\Sigma_t}  H | \nabla u|  , \ \ \ \  \mathcal{A}(t) = \frac{A (t) }{1-t}   ,  $$
$$ B(t) =  4 \pi - \frac{1}{( 1 - t)^2} \int_{\Sigma_t}  | \nabla u|^2 , \  \ \  \ \mathcal{B} (t) =  \frac{B(t) }{1-t}   .  $$
Then, for any $  t_1 < t_2  $,
\be \label{eq-difference-mathcal-B}
\mathcal{B} (t_2) - \mathcal{B} (t_1) 
=   \int_{t_1}^{t_2}   \frac{1}{(1-t)^2}  \left[ 3 B(t) - A (t) \right] ,
\ee
and
\be \label{eq-difference-mathcal-A}
\begin{split}
\mathcal{A}(t_2) - \mathcal{A}(t_1) \ge & \ \int_{t_1}^{t_2}   \frac{1}{(1-t)^2}  \left[  3 B(t) - A (t)   
+  2 \pi  \left( 2 - \chi(\Sigma_t)  \right)  + \psi (t) \right]   ,
\end{split}
\ee
where
\bee \label{eq-def-psi-t}
\psi (t) =  \int_{\Sigma_t}  \left[  \frac34 \left( H - \frac{ 2 | \nabla u | }{ 1 - u } \right)^2 
+ | \nabla u |^{-2} | \nabla_{_{\Sigma_t} } | \nabla u | |^2 
+ \frac12 | \mathring{ \Pi } |^2 +  \frac12 R  \right] . 
\eee
As a result, if 
\begin{itemize}
\item  $(M, g)$ is a complete, orientable, asymptotically flat $3$-manifold 
with connected boundary $\Sigma$ and  $H_2 (M, \Sigma) = 0$; 
\item  $u$ is the harmonic function on $(M, g)$  with $ u = 0 $ at $ \Sigma$ and $ u \to 1$ at $\infty$; and
\item $g$ has nonnegative scalar curvature, 
\end{itemize}
 then $ \Sigma_t $ is connected, $ 3 B(t) - A (t) \ge 0 $  by \eqref{eq-A-and-3B}, and 
consequently, 
$$ \mathcal{B} (t_2) - \mathcal{B} (t_1)  \ge 0  \ \ \ \text{and} \ \ \
\mathcal{A} (t_2) - \mathcal{A} (t_1)  \ge 0 , \ \ \forall \ t_2 > t_1 .  $$
\end{prop}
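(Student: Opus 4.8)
The plan is to obtain the two difference formulae by evaluating $u$, restricted to the region $\Omega_{[t_1,t_2]}=\{x:t_1\le u(x)\le t_2\}$ bounded by two regular level sets, in the two regularized integration identities \eqref{eq-reg-for-B} of Lemma \ref{lem-reg} and \eqref{eq-weighted} of Lemma \ref{lem-reg-3}, which already absorb the passage through critical level sets. Throughout, $\Omega_{[t_1,t_2]}$ is compact with $\partial\Omega_{[t_1,t_2]}=\Sigma_{t_1}\sqcup\Sigma_{t_2}$, and the outward unit normal $\zeta$ equals $\nu=|\nabla u|^{-1}\nabla u$ on $\Sigma_{t_2}$ and $-\nu$ on $\Sigma_{t_1}$, so $\partial u/\partial\zeta=|\nabla u|$ on $\Sigma_{t_2}$ and $-|\nabla u|$ on $\Sigma_{t_1}$; keeping these two signs straight is the only bookkeeping point on the boundary. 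On $\{\nabla u\ne 0\}$ I will repeatedly use $\nabla^2 u(\nabla u,\nabla u)=-H|\nabla u|^3$, which is immediate from \eqref{eq-lap-conse}, together with the coarea formula to turn volume integrals over $\Omega_{[t_1,t_2]}$ into iterated integrals over $\{\Sigma_t\}$.

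For \eqref{eq-difference-mathcal-B}, I write $\mathcal B(t)=\dfrac{4\pi}{1-t}-\dfrac{1}{(1-t)^3}\int_{\Sigma_t}|\nabla u|^2$. Applying \eqref{eq-reg-for-B} on $\Omega_{[t_1,t_2]}$, the left side equals $\dfrac{1}{(1-t_2)^3}\int_{\Sigma_{t_2}}|\nabla u|^2-\dfrac{1}{(1-t_1)^3}\int_{\Sigma_{t_1}}|\nabla u|^2$, the first volume term becomes $\int_{t_1}^{t_2}\frac{3}{(1-t)^4}\int_{\Sigma_t}|\nabla u|^2\,dt$, and the second becomes $-\int_{t_1}^{t_2}\frac{1}{(1-t)^3}\int_{\Sigma_t}H|\nabla u|\,dt$. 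Adding $4\pi\big(\frac{1}{1-t_2}-\frac{1}{1-t_1}\big)=4\pi\int_{t_1}^{t_2}(1-t)^{-2}\,dt$ and rearranging yields $\mathcal B(t_2)-\mathcal B(t_1)=\int_{t_1}^{t_2}(1-t)^{-2}\big[4\pi-\frac{3}{(1-t)^2}\int_{\Sigma_t}|\nabla u|^2+\frac{1}{1-t}\int_{\Sigma_t}H|\nabla u|\big]\,dt$, and the bracket is exactly $3B(t)-A(t)$.

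For \eqref{eq-difference-mathcal-A}, I write $\mathcal A(t)=\dfrac{8\pi}{1-t}-\dfrac{1}{(1-t)^2}\int_{\Sigma_t}H|\nabla u|$. Applying \eqref{eq-weighted} on $\Omega_{[t_1,t_2]}$, whose left side is $\dfrac{1}{(1-t_2)^2}\int_{\Sigma_{t_2}}H|\nabla u|-\dfrac{1}{(1-t_1)^2}\int_{\Sigma_{t_1}}H|\nabla u|$ once the two normal orientations are accounted for (the $H$ in \eqref{eq-weighted} being relative to $\zeta$), and subtracting from $8\pi\int_{t_1}^{t_2}(1-t)^{-2}\,dt$, gives the lower bound $\mathcal A(t_2)-\mathcal A(t_1)\ge\int_{t_1}^{t_2}(1-t)^{-2}\big\{8\pi-\frac{2}{1-t}\int_{\Sigma_t}H|\nabla u|+\int_{\Sigma_t}\frac12\big(|\nabla u|^{-2}|\nabla^2 u|^2+R\big)-2\pi\chi(\Sigma_t)\big\}\,dt$. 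Then I substitute $|\nabla u|^{-2}|\nabla^2 u|^2=|\mathring{\Pi}|^2+\tfrac32 H^2+2|\nabla u|^{-2}|\nabla_{_{\Sigma_t}}|\nabla u||^2$, which follows from \eqref{eq-hessian-square} and $|\Pi|^2=|\mathring{\Pi}|^2+\tfrac12 H^2$, and complete the square on $\Sigma_t$ via $\tfrac34 H^2=\tfrac34\big(H-\tfrac{2|\nabla u|}{1-t}\big)^2+\tfrac{3H|\nabla u|}{1-t}-\tfrac{3|\nabla u|^2}{(1-t)^2}$. The part of the integrand collecting into $\psi(t)$ appears verbatim, while the remaining scalar terms add up to $8\pi+\frac{1}{1-t}\int_{\Sigma_t}H|\nabla u|-\frac{3}{(1-t)^2}\int_{\Sigma_t}|\nabla u|^2-2\pi\chi(\Sigma_t)=[3B(t)-A(t)]+2\pi(2-\chi(\Sigma_t))$, giving \eqref{eq-difference-mathcal-A}. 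The only place requiring care is organizing this rearrangement and tracking the $\chi(\Sigma_t)$ term produced by the Gauss--Bonnet step already built into \eqref{eq-weighted}.

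Finally, for the concluding assertion: if $(M,g)$ is asymptotically flat with $\Sigma$ connected and $H_2(M,\Sigma)=0$, then $u$ harmonic and the maximum principle force each regular $\Sigma_t$ to be connected, hence (as a closed orientable surface) $\chi(\Sigma_t)\le 2$, so $2-\chi(\Sigma_t)\ge 0$; meanwhile $3B(t)-A(t)\ge 0$ is \eqref{eq-A-and-3B}, and $\psi(t)\ge 0$ whenever $R\ge 0$. Thus the integrands in \eqref{eq-difference-mathcal-B} and \eqref{eq-difference-mathcal-A} are nonnegative, which yields $\mathcal B(t_2)\ge\mathcal B(t_1)$ and $\mathcal A(t_2)\ge\mathcal A(t_1)$ for all $t_2>t_1$. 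Granting Lemmas \ref{lem-reg} and \ref{lem-reg-3}, the remaining obstacle is purely the sign bookkeeping on $\partial\Omega_{[t_1,t_2]}$ and the completion-of-square computation in the $\mathcal A$ case.
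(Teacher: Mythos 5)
Your proposal is correct and follows essentially the same route as the paper: apply \eqref{eq-reg-for-B} of Lemma \ref{lem-reg} together with the coarea formula and $\nabla^2 u(\nabla u,\nabla u)=-H|\nabla u|^3$ to get \eqref{eq-difference-mathcal-B}, and apply \eqref{eq-weighted} of Lemma \ref{lem-reg-3} with the decomposition \eqref{eq-hessian-square} and the completion of the square $\frac34 H^2=\frac34\bigl(H-\frac{2|\nabla u|}{1-u}\bigr)^2+\frac{3H|\nabla u|}{1-u}-\frac{3|\nabla u|^2}{(1-u)^2}$ to get \eqref{eq-difference-mathcal-A}. The sign bookkeeping on $\p\Omega_{[t_1,t_2]}$ and the final algebraic identification of the integrand with $[3B(t)-A(t)]+2\pi(2-\chi(\Sigma_t))+\psi(t)$ all check out, as does the concluding monotonicity argument.
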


\begin{proof}
Applying \eqref{eq-reg-for-B} in Lemma \ref{lem-reg} to $\Omega_{[t_1, t_2]} = \{ x \ | \ t_1 \le u(x) \le t_2 \}$,
one has
\bee
\begin{split}
   \int_{\Sigma_{t_2} } \frac{ | \nabla u |^2 }{(1- u )^3}   -  \int_{\Sigma_{t_1} } \frac{ | \nabla u |^2 }{(1- u )^3} 
= & \  \int_{\Omega_{ [t_1, t_2] } } \frac{ 3  | \nabla u |^3  }{ ( 1 - u )^4 }    + 
\int_{ \{ \nabla u \ne 0 \}  \subset \Omega_{[t_1, t_2]} } \frac{ \nabla^2 u ( \nabla u , \nabla u  ) } { (1 - u)^3  | \nabla u |  }   \\
= & \ \int_{t_1}^{t_2} \int_{\Sigma_t} \left[ \frac{3 | \nabla u |^2} {(1-t)^4 } - \frac{H | \nabla u |}{ ( 1 - t)^3 } \right]  ,
\end{split}
\eee
This, combined with $ \frac{1}{1-t_2} - \frac{1}{1 - t_1} = \int_{t_1}^{t_2} \frac{1}{( 1 - t)^2 } $,  shows
\be
\begin{split}
 \mathcal{B}(t_2) - \mathcal{B} (t_1)  
= & \ \int_{t_1}^{t_2} \left[   \frac{4\pi}{(1-t)^2} + \int_{\Sigma_t} \frac{H | \nabla u |}{ ( 1 - t)^3 }   -  \int_{\Sigma_t} \frac{3 | \nabla u |^2} {(1-t)^4 }  \right] \\
= & \ \int_{t_1}^{t_2}   \frac{1}{(1-t)^2}  \left[  3 B(t) - A (t)   \right] ,
\end{split}
\ee
which proves \eqref{eq-difference-mathcal-B}.

Similarly,  applying Lemma \ref{lem-reg-3} to $u$ on $\Omega_{[t_1, t_2]}$
and using \eqref{eq-hessian-square}, one has
\bee
\begin{split}
& \ \frac{1}{(1- t_2)^2} \int_{\Sigma_{t_2} } H  |\nabla u | 
- \frac{1}{(1- t_1)^2} \int_{\Sigma_{t_1} } H  |\nabla u |  \\
\le & \ \int_{t_1}^{t_2}   \frac{1}{(1-t)^2}  \left[ 2 \pi \chi(\Sigma_t)   +  \int_{\Sigma_t}   \frac{2  H | \nabla u | }{1-t}  
 \right. \\
& \  \hspace{2.5cm}  \left. - \int_{\Sigma_t} \frac34  H^2  
-  \int_{\Sigma_t}  \left(  | \nabla u |^{-2} | \nabla_{_{\Sigma_t} } | \nabla u | |^2 
+ \frac12 | \mathring{ \Pi } |^2 +  \frac12 R  \right)  \right]  \\
=  & \ \int_{t_1}^{t_2}   \frac{1}{(1-t)^2}  \left[  2 \pi \chi(\Sigma_t)  - \psi (t) 
 - \frac{1}{1-t} \int_{\Sigma_t} H | \nabla u |  +  \frac{3}{(1-t)^2 }  \int_{\Sigma_t} | \nabla u |^2   \right]  .
\end{split}
\eee
Therefore, 
\bee
\mathcal{A} (t_2) - \mathcal{A}(t_1) 
\ge   \int_{t_1}^{t_2}   \frac{1}{(1-t)^2}  \left[  3 B(t) - A (t)   +  2 \pi  \left( 2  - \chi(\Sigma_t) \right) + \psi(t) \right]   ,
\eee
which proves \eqref{eq-difference-mathcal-A}.
\end{proof}

\begin{rem}
As a corollary of \eqref{eq-difference-mathcal-B} and \eqref{eq-difference-mathcal-A}, 
\be \label{eq-monotone-AMO}
\begin{split}
& \ \left[ \mathcal{A}(t_2) - \mathcal{B}(t_2) \right] - \left[ \mathcal{A}(t_1) - \mathcal{B}(t_1) \right]  \\
\ge & \ \int_{t_1}^{t_2}   \frac{1}{(1-t)^2}  \left[  2 \pi  \left( 2 -  \chi (\Sigma_t) \right)  + \psi (t)   \right]  .
\end{split}
\ee
This corresponds to the monotonicity of $ F(t) = \mathcal{A}(t) - \mathcal {B}(t)$ in \cite{AMO21}.
\end{rem}

\end{document}